\documentclass[graybox]{amsart}

\usepackage{amssymb,a4wide,amsthm,amsmath,amsfonts,epsfig}
\usepackage{mathrsfs}
\usepackage{hyperref}
\input colordvi
\usepackage[active]{srcltx}
\usepackage{comment}
\theoremstyle{plain}
\newtheorem{theorem}{Theorem}[section]
\theoremstyle{remark}
\newtheorem{remark}[theorem]{Remark}

\theoremstyle{plain}
\newtheorem{corollary}[theorem]{Corollary}
\newtheorem{lemma}[theorem]{Lemma}
\newtheorem{proposition}[theorem]{Proposition}
\newtheorem{convention}[theorem]{Convention}
\newtheorem{definition}[theorem]{Definition}

\numberwithin{equation}{section}

\begin{document}
\title[Stochastic geodesic equation in $T\Bbb S^2$]{Ergodicity for a stochastic geodesic equation in the tangent bundle of the 2D sphere}
\author[L. Ba\v nas, Z. Brze\'zniak, M. Neklyudov, M. Ondrej\'at, A. Prohl]{L. Ba\v nas, Z. Brze\'zniak, M. Neklyudov, M. Ondrej\'at, A. Prohl}
\address{Fakult\"at f\"ur Mathematik, Universit\"at Bielefeld, 33501 Bielefeld, Germany}
\email{banas@math.uni-bielefeld.de}
\address{Department of Mathematics, The University of York, Heslington, York YO10 5DD, United Kingdom}
\email{zb500@york.ac.uk}
\address{Faculty of Science Office, Level 2, Carslaw Building (F07), University of Sydney NSW 2006, Australia}
\email{misha.neklyudov@gmail.com}
\address{Institute of Information Theory and Automation, Pod Vod\'arenskou v\v e\v z\'{\i} 4, CZ-182 08, Praha 8, Czech Republic, phone: ++ 420 266 052 284, fax: ++ 420 286 890 378}
\email{ondrejat@utia.cas.cz}
\address{Mathematisches Institut, Universit\"at T\"ubingen, Auf der Morgenstelle 10, D-72076 T\"ubingen, Germany}
\email{prohl@na.uni-tuebingen.de}
\thanks{The research of the fourth named author was supported by the GA \v CR Grant P201/10/0752.}
\subjclass{}
\maketitle
\begin{abstract}
We study ergodic properties of stochastic geometric wave equations on a particular model with the target being the $2D$ sphere while considering the space variable-independent solutions only. This simplification leads to a degenerate stochastic equation in the tangent bundle of the $2D$ sphere. Studying this equation, we prove existence and non-uniqueness of invariant probability measures for the original problem and we obtain also results on attractivity towards an invariant measure. We also present a suitable numerical scheme for approximating the solutions subject to a sphere constraint.
\end{abstract}

\section{Introduction}

Wave equations subject to random excitations have been largely studied in last forty years for its applications in physics, relativistic quantum mechanics or oceanography, see e.g. \cite{Cabana_1972}, \cite{Carm+Nual_1988}, \cite{Carm+Nual_1988_b}, \cite{Chow_2002}, \cite{Dal+Lev_2004}, \cite{Marcus+M_1991}, \cite{Masl+S_1993}, \cite{Mill+M_2001}, \cite{o4}, \cite{o1}, \cite{Pesz+Zab_1997}, \cite{Dal-Fr_1998}, \cite{Karcz+Z_1998}, \cite{Karcz+Z_1999}, \cite{Mill+SS_1999}, \cite{Pesz_2002}, \cite{Pesz+Zab_2000}. The mathematical research has paid attention predominantly to stochastic wave equations whose solutions took values in Euclidean spaces, however many physical theories and models in modern physics such as harmonic gauges in general relativity, non-linear $\sigma$-models in particle systems, electro-vacuum Einstein equations or Yang-Mills field theory require the target space of the solutions to be a Riemannian
manifold see e.g. \cite{Gin+Vel_1982} and \cite{Shatah+Struwe_1998}. Stochastic wave equations with values in Riemannian manifolds were first studied in \cite{Brz+O_2007} (see also  \cite{Brz+O_2009})  where existence and uniqueness of global strong solutions were proved for equations defined on the one-dimensional Minkowski space $\mathbb{R}^{1+1}$ and arbitrary Riemannian manifold. Later, in \cite{Brz+O_2013}, global existence was proved for equations on a general Minkowski space $\mathbb{R}^{1+d}$ with the target space being restricted to homogeneous spaces (for instance, a sphere) and, in \cite{Brz+O_2011}, global existence of weak solutions was proved for equations on $\mathbb{R}^{1+1}$ with an arbitrary target. The last two works admitted rougher noises than in \cite{Brz+O_2007}, but for the price of not dealing with the question of uniqueness and of worse spatial regularity of the solutions.

In the present paper, we intend to open the door and enter into the study of ergodic properties of solutions of these equations. In particular, we are interested in existence and uniqueness (or multitude) of invariant measures of the Markov semigroup associated to solutions of a stochastic geometric equation and we also want to address the questions of ergodic properties and of the rates of convergence to an attracting law, if there is any.

This goal however seems to be fairly complicated and too ambitious to achieve at once, hence we will proceed {\it a minori ad majus} and we will study just space independent solutions of a damped stochastic geometric wave equation in the 2D sphere. This particular exemplary equation is, in our opinion, quite illustrative to understand what one can expect in the general case. In this way, the stochastic equation will reduce to a degenerate second order stochastic differential equation with values in the tangent bundle $T\Bbb S^2$. We will prove that there exist plenty of invariant measures and that the system always converges in total variation to a limit law. If we however restrict the state space to a suitable submanifold in $T\Bbb S^2$ then there exists just one unique invariant measure (the normalized surface measure on this submanifold) which attracts every initial distribution in total variation with an exponential rate.

A further goal of this paper is to construct a numerical scheme for solving a class of SDEs on manifolds - the geodesic equation on the sphere with stochastic forcing. A convergent discretisation in space and time for a similar but first order stochastic Landau-Lifshitz-Gilbert equation, which is based on finite elements, is proposed in \cite{bcp1}; this scheme guarantees the sphere constraint to hold for approximate magnetisation processes and thus inherits the Lyapunov structure of the problem. As a consequence, iterates may be shown to construct weak martingale solutions of the limiting equations. Main steps of this construction are detailed in Section~\ref{napprox2}, which requires a discrete Lagrange multiplier used in the presented algorithm for iterates to inherit the sphere constraint in a discrete setting. Overall convergence of iterates is asserted in Theorem~\ref{numscheme1} which holds for this particular SDE on the sphere, but which may also be considered as a first step to numerically approximate the stochastic geometric wave equation. Again, computational examples are provided in Section~\ref{nelubo} to illustrate the results proved in this work, and motivate further analytical studies for computationally observed long-time behaviors which lack a sound analytical understanding at this stage.

The authors wish to thank Jan Seidler for valuable discussions and for pointing out the works \cite{ich1} and \cite{ich2} to us.

\section{Notation and conventions}
If $Y$ is a topological space, we will denote by $B_b(Y)$ the space of real bounded Borel functions on $Y$, by $C_b(Y)$ the space of real bounded continuous functions on $Y$, by $\mathscr B(Y)$ the Borel $\sigma$-algebra over $Y$. We will work on a probability space $(\Omega,\mathscr F,\Bbb P)$ equipped with a filtration $(\mathscr F_t)$ such that $\mathscr F_0$ contains all $\Bbb P$-negligible sets in $\mathscr F$ and $W$ will be a standard $(\mathscr F_t)$-Wiener process. Throughout this paper, all initial conditions are assumed to be $\mathscr F_0$-measurable.

\section{The problem}

Let $M$ be a compact $m$-dimensional Riemannian manifold embedded in a Euclidean space $\Bbb R^n$. Denote by $T_pM$ the tangent space at $p\in M$, by $N_pM=(T_pM)^\perp$ the normal space at $p\in M$, by $TM=\bigcup_{p\in M}T_pM$ and $T^kM=\bigcup_{p\in M}(T_pM)^k$ the tangent bundle and the $k$-tangent bundle of $M$ resp., by $S_p:T_pM\times T_pM\to N_pM$, $p\in M$ the second fundamental form of $M$ in $\Bbb R^n$ and let $W$ be, for simplicity, a one-dimensional Wiener process. According to \cite{Brz+O_2007}, the general Cauchy problem for a stochastic geometric wave equation has the form
\begin{eqnarray}\label{sgwe1}
du_t&=&\Delta u-\sum_{i=1}^mS_u(u_{x_i},u_{x_i})+S_u(u_t,u_t)+F_u(Du)+G_u(Du)\,dW
\\
u&\in& M\label{sgweconst}
\\
(u(0),u_t(0))&\in&TM\label{sgwe2}
\end{eqnarray}
where $F$ is a drift, $G$ a diffusion and $Du=(u_t,u_{x_1},\dots,u_{x_m})$. For the equation to make sense, it is required that $F:T^{m+1}M\to TM$ and $G:T^{m+1}M\to TM$ are Borel measurable and that $F_p(X_0,\dots,X_m)$ and $G_p(X_0,\dots,X_m)$ belong to the tangent space $T_pM$ for every $p\in M$ and every $X_0,\dots,X_m\in T_pM$.

In case $M$ is the unit sphere in $\Bbb R^3$ then the second fundamental form satisfies $S_p(X,Y)=-\langle X,Y\rangle p$, so if we set $F_p(X_0,X_1,X_2)=-\frac 12X_0$, $G_p(X_0,X_1,X_2)=p\times X_0$ where\footnote{Here $a\times b=(a_2b_3-a_3b_2,a_3b_1-a_1b_3,a_1b_2-a_2b_1)$ for $a,b\in\Bbb R^3$.} the diffusion term is inspired by the diffusion terms proposed in \cite{bbp1} or \cite{nekpro} in connection with the stochastic Landau-Lifshitz-Gilbert equation for ferromagnetic and nanomagnetic models then the equation \eqref{sgwe1} with the constraints \eqref{sgweconst}, \eqref{sgwe2} has the form

\begin{equation}\label{sswe}
du_t=[\Delta u+(|\nabla u|^2-|u_t|^2)u-\frac 12u_t]\,dt+u\times u_t\,dW,\qquad|u|=1,\qquad u(0)\perp u_t(0).
\end{equation}

If we consider just space independent solutions, i.e. solutions independent of the spatial variables then \eqref{sswe} reduces to an It\^o SDE
\begin{equation}\label{seqito}
du^\prime=[-|u^\prime|^2u-\frac 12u^\prime]\,dt+(u\times u^\prime)\,dW,\qquad|u|=1,\qquad u(0)\perp u^\prime(0)
\end{equation}
or, equivalently, to a Stratonovich SDE
\begin{equation}\label{seqstrat}
du^\prime=-|u^\prime|^2u\,dt+(u\times u^\prime)\,\circ dW,\qquad|u|=1,\qquad u(0)\perp u^\prime(0)
\end{equation}
which is the stochastic geodesic equation for the unit sphere\footnote{The geodesic equation for the unit sphere has the form $u^{\prime\prime}=-|u^\prime|^2u$, $|u|=1$, $u^\prime(0)\perp u(0)$.}. Let us rewrite \eqref{seqstrat} to two equations of first order equations 
\begin{equation}\label{gsde}
dz=f(z)\,dt+g(z)\,\circ dW,\qquad z\in T\Bbb S^2,\qquad z(0)\in T\Bbb S^2
\end{equation}
where $T\Bbb S^2\subseteq\Bbb R^6$ is the tangent bundle of $\Bbb S^2$, i.e. $T\Bbb S^2=\{(u,v):|u|=1,u\perp v\}$ and
\begin{equation}\label{defvf}
z=\left(\begin{array}{c}u\\v\end{array}\right),\qquad f(z)=\left(\begin{array}{c}v\\-|v|^2u\end{array}\right),\qquad g(z)=\left(\begin{array}{c}0\\u\times v\end{array}\right).
\end{equation}

\begin{remark}\label{rest0} Observe that restrictions of $f$ and $g$ to $T\Bbb S^2$ are vector fields on the manifold $T\Bbb S^2$. Hence \eqref{gsde} is a correctly defined stochastic differential equation on the manifold $T\Bbb S^2$, cf. \cite[Chapter V]{ikwa}.
\end{remark}

The equation \eqref{seqito} and its equivalent formulations \eqref{seqstrat}, \eqref{gsde} will be {\it the object of study} of the present paper. It is also important to realize while reading the paper that \eqref{seqito} is a particular case of the stochastic geometric wave equation \eqref{sgwe1}-\eqref{sgwe2}.

\section{Basic properties of solutions of the SDE}

We will study existence of global solutions, dependence on initial conditions, some further qualitative properties of solutions of the equation \eqref{gsde} and the Feller property of the associated Markov semigroup.

\subsection{Global existence} The nonlinearities of the equation \eqref{gsde} are locally Lipschitz on $\Bbb R^6$ hence, by the standard existence result (see e.g. \cite[Lemma 2.1]{ikwa}), the equation \eqref{gsde} considered without the constraint,
\begin{equation}\label{pomoc}
dz=f(z)\,dt+g(z)\,\circ dW,\qquad z(0)\in T\Bbb S^2,
\end{equation}
has a local solution $z$ in $\Bbb R^6$ defined upto an explosion time $\tau>0$, i.e.
\begin{equation}\label{explosionc}
\limsup_{t\uparrow\tau}|z(t)|=\infty\quad\textrm{almost surely on}\quad[\tau<\infty].
\end{equation}

\begin{proposition}\label{gext} Every solution to \eqref{pomoc} is global and satisfies $z=(u,v)\in T\Bbb S^2$, i.e. it is a solution to the equation \eqref{gsde}. Moreover, $|v(t)|=|v(0)|$ for every $t\ge 0$ almost surely.
\end{proposition}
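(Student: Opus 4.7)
The plan is to show that the three scalar quantities $|u|^2$, $\langle u,v\rangle$, and $|v|^2$ are pathwise constant on the random interval $[0,\tau)$. Together with the initial condition $z(0)\in T\Bbb S^2$ this simultaneously yields the invariance of $T\Bbb S^2$, the conservation of $|v|$, and an a priori bound $|z(t)|^2\le 1+|v(0)|^2$ that contradicts \eqref{explosionc} unless $\tau=\infty$.

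First I would exploit the Stratonovich form \eqref{pomoc}, whose chain rule is the ordinary one. Applying it to the three smooth scalar functions $\phi_1(u,v)=|u|^2$, $\phi_2(u,v)=\langle u,v\rangle$, $\phi_3(u,v)=|v|^2$, the key algebraic fact is the pointwise orthogonality $\langle u,u\times v\rangle=\langle v,u\times v\rangle=0$, so that every contribution from the diffusion $g(z)$ vanishes. A short computation then yields the closed deterministic system valid pathwise on $[0,\tau)$,
\[
\frac{d}{dt}|u|^2=2\langle u,v\rangle,\qquad \frac{d}{dt}\langle u,v\rangle=|v|^2\bigl(1-|u|^2\bigr),\qquad \frac{d}{dt}|v|^2=-2|v|^2\langle u,v\rangle.
\]

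Second, setting $a=|u|^2-1$, $b=\langle u,v\rangle$, $c=|v|^2$, the hypothesis $z(0)\in T\Bbb S^2$ gives $a(0)=b(0)=0$, and the point $(0,0,|v(0)|^2)$ is plainly an equilibrium of the polynomial (hence locally Lipschitz) right-hand side of the above system. Applying the Picard--Lindel\"of uniqueness theorem $\omega$-by-$\omega$ forces $a\equiv 0$, $b\equiv 0$, $c\equiv|v(0)|^2$ on $[0,\tau)$ almost surely, i.e.\ $|u(t)|=1$, $u(t)\perp v(t)$, and $|v(t)|=|v(0)|$.

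Finally, the resulting pathwise bound $|z(t)|^2=1+|v(0)|^2$ on $[0,\tau)$ contradicts \eqref{explosionc} unless $\tau=\infty$ a.s., so every local solution of \eqref{pomoc} is global and $T\Bbb S^2$-valued. The only point that requires mild care is that the Stratonovich chain rule has to be invoked on the random interval $[0,\tau)$ rather than on a deterministic one, but this is standard for local solutions up to an explosion time; the a priori bound then closes the loop by ruling out $\tau<\infty$.
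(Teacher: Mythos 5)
Your proof is correct and follows essentially the same strategy as the paper's: derive deterministic pathwise ODEs for the scalar quantities $|u|^2$, $\langle u,v\rangle$, $|v|^2$ (the noise terms vanish by orthogonality to $u\times v$), invoke ODE uniqueness to conclude they are conserved, and rule out explosion via the resulting bound on $|z|$. The only difference is organizational — you close an autonomous first-order system in $(|u|^2-1,\langle u,v\rangle,|v|^2)$ and use its equilibrium, whereas the paper writes a second-order linear ODE for $|u|^2-1$ and then treats $|v|^2$ separately — which is a harmless (if anything, slightly cleaner) repackaging of the same argument.
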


\begin{proof}
Applying the It\^o formula to $|u|^2$, we obtain that $\phi=|u|^2-1$ satisfies almost surely on $[0,\tau)$ the ODE
\begin{equation}\label{pomocr}
\phi^{\prime\prime}=-2|v|^2\phi-\frac 12\phi^\prime,\qquad\phi(0)=0,\qquad\phi^\prime(0)=0.
\end{equation}
Hence, by the uniqueness of the solutions to the equation \eqref{pomocr}, we obtain that $\phi=0$ on $[0,\tau)$, consequently, $|u|=1$ on $[0,\tau)$ almost surely. In particular, differentiating $|u|^2=1$, we obtain that $u\perp v=0$ on $[0,\tau)$ almost surely. Now, applying the It\^o formula to $|v|^2$, we obtain that $\varphi=|v|^2$ satisfies on $[0,\tau)$ almost surely the equation
$$
\varphi^\prime=-(1+2\langle u,v\rangle)|v|^2+|u\times v|^2.
$$
The right hand side equals to 
$$
-(1+2\langle u,v\rangle)|v|^2+|u|^2|v|^2-\langle u,v\rangle^2=0
$$
as $u\perp v$ and $|u|=1$ almost surely. Hence $|v|$ is pathwise constant. In particular, $\tau=\infty$ almost surely by \eqref{explosionc}.
\end{proof}

\subsection{The Markov and the Feller property}
Define $Y=\Bbb R^n$. It is well known that if $\tilde f$, $\tilde g$ are $C^\infty$ vector fields on $\Bbb R^n$ with a compact support and $u^\xi$ denotes the solution of the equation 
\begin{equation}\label{lipcoe}
dX=\tilde f(X)\,dt+\tilde g(X)\circ dW,\qquad X(0)=\xi
\end{equation}
for an $\mathscr F_0$-measurable $Y$-valued random variable $\xi$ then the solutions of the equation \eqref{lipcoe} satisfy the Markov property and define  a Feller semigroup\footnote{We allow here a little inaccuracy. More precisely, the semigroup is defined on the space of bounded Borel functions on $Y$.} on $Y$ by which we mean that
\begin{itemize}
\item[(a)] the transition function
$$
q_{t,x}(A)=\Bbb P\,[u^x(t)\in A],\qquad t\ge 0,\,\,x\in Y,\,\, A\in\mathscr B(Y)
$$
is jointly measurable in $(t,x)\in[0,\infty)\times Y$ for every $A\in\mathscr B(Y)$,
\item[(b)] the endomorphisms on $B_b(Y)$
$$
Q_t\varphi(x)=\Bbb E\,\varphi(u^x(t)),\qquad t\ge 0,\,\,x\in Y,\,\,\varphi\in B_b(Y)
$$
satisfy the semigroup property, i.e. $Q_t\circ Q_s=Q_{t+s}$ for every $t,s\ge 0$,
\item[(c)] $Q_t\varphi$ is continuous on $Y$ whenever $t\ge 0$ and $\varphi\in C_b(Y)$,
\item[(d)] $\Bbb E\,[\varphi(u^\xi(t))|\mathscr F_s]=(Q_{t-s}\varphi)(u^\xi (s))$ holds a.s. for every $\varphi\in B_b(Y)$, $0\le s\le t$ and an $\mathscr F_0$-measurable $Y$-valued random variable $\xi$,
\end{itemize}
see e.g. \cite[Section 9.2.1]{dapzab_1992}. In fact, (a) and (c) follow simply from the fact that 
\begin{equation}\label{joicont}
Q_t\varphi(x)\textrm{ is jointly continuous in }(t,x)\textrm{ on }[0,\infty)\times Y\textrm{ if }\varphi\in C_b(Y),
\end{equation}
see again \cite[Section 9.2.1]{dapzab_1992} for the proof of \eqref{joicont}, and the semigroup property (b) follows from the Markov property (d).

Moreover, if $\varphi\in C^2(Y)$ with derivatives of order $0,1,2$ bounded then 
\begin{equation}\label{diffprop}
\rho(t,x)=Q_t\varphi(x)\textrm{ belongs to }C^{1,2}([0,\infty)\times Y)
\end{equation}
with $\rho$, $\frac{\partial\rho}{\partial t}$, $\frac{\partial\rho}{\partial x_i}$, $\frac{\partial^2\rho}{\partial x_i\partial x_j}$ bounded for every $i,j\in\{1,\dots,n\}$ and it is a solution to the backward Kolmogorov equation
\begin{equation}\label{kolm1}
\frac{\partial U}{\partial t}=\sum_{i=1}^n\tilde f_i\frac{\partial U}{\partial x_i}+\frac 12\sum_{i=1}^n\sum_{j=1}^n\tilde g_i\frac{\partial}{\partial x_i}\left(\tilde g_j\frac{\partial U}{\partial x_j}\right),\qquad U(0,x)=\varphi(x)\textrm{ for every }x\in Y
\end{equation}
unique in the class $C^{1,2}([0,\infty)\times Y)$, see e.g. \cite[Section 9.3]{dapzab_1992}.

Unfortunately, the coefficients of the equation \eqref{gsde} are not compactly supported so we cannot simply conclude that the solutions of \eqref{gsde} satisfy the Markov property and define a Feller semigroup in the sense (a)-(d) above. Yet, it is true, as will be shown below. 

\begin{definition}\label{defi} From now on, $z^\xi$ denotes the solution of \eqref{gsde} with the initial condition $\xi$, $p_{t,x}(A)=\Bbb P\,[z^x(t)\in A]$ and $P_t\varphi(x)=\Bbb E\,\varphi(z^x(t))$ are defined for $\varphi\in B_b(T\Bbb S^2)$, $t\ge 0$, $x\in T\Bbb S^2$ and $A\in\mathscr B(T\Bbb S^2)$.
\end{definition}

\begin{proposition}\label{markov} The solutions of \eqref{gsde} satisfy the Markov property and define a Feller semigroup on $T\Bbb S^2$. In fact, $P_t\varphi(x)$ is jointly continuous in $(t,x)$ on $[0,\infty)\times T\Bbb S^2$ for every $\varphi\in C_b(T\Bbb S^2)$ and
$$
\Bbb E\,[\varphi(z^\xi(t))|\mathscr F_s]=(P_{t-s}\varphi)(z^\xi (s))\textrm{ almost surely}
$$
holds for every $\varphi\in B_b(T\Bbb S^2)$, $0\le s\le t$ and every initial $T\Bbb S^2$-valued initial condition $\xi$.
\end{proposition}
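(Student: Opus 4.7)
The plan is to reduce to the compactly-supported setting of \eqref{lipcoe} by a truncation argument relying on the norm conservation from Proposition \ref{gext}. For each $R>0$ I would fix a cutoff $\chi_R\in C_c^\infty(\Bbb R^6)$ with $\chi_R\equiv 1$ on $\{|z|\le R\}$, set $\tilde f^R=\chi_R f$, $\tilde g^R=\chi_R g$ (smooth with compact support, since $f,g$ are polynomial), and denote by $X^{R,\xi}$ the solution of
$$
dX=\tilde f^R(X)\,dt+\tilde g^R(X)\circ dW,\qquad X(0)=\xi.
$$
The associated Feller semigroup $Q_t^R$ on $\Bbb R^6$ then satisfies (a)--(d) and the joint continuity \eqref{joicont}. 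The decisive observation is that, by Proposition \ref{gext}, $|z^x(t)|=|x|$ for every $t\ge 0$ a.s., so whenever $x\in T\Bbb S^2$ with $|x|<R$ the path $z^x$ never leaves the ball on which $(f,g)=(\tilde f^R,\tilde g^R)$; pathwise uniqueness for the truncated SDE, which has locally Lipschitz coefficients, then yields $z^x=X^{R,x}$ a.s.\ on $[0,\infty)$.

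For joint continuity and the Feller property I would fix $(t_0,x_0)\in[0,\infty)\times T\Bbb S^2$, choose $R>|x_0|$, and set $V_R=\{x\in T\Bbb S^2:|x|<R\}$, an open neighborhood of $x_0$ in $T\Bbb S^2$. Given $\varphi\in C_b(T\Bbb S^2)$, the Tietze extension theorem produces $\tilde\varphi\in C_b(\Bbb R^6)$ with $\tilde\varphi|_{T\Bbb S^2}=\varphi$. The previous step gives $P_t\varphi(x)=Q_t^R\tilde\varphi(x)$ for every $(t,x)\in[0,\infty)\times V_R$, so joint continuity of $P_t\varphi$ at $(t_0,x_0)$ is inherited from \eqref{joicont} applied to $Q_t^R\tilde\varphi$. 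Joint measurability of the transition function $p_{t,x}$ for general Borel $A$ then comes from a standard monotone class argument based on $\varphi\in C_b(T\Bbb S^2)$.

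For the Markov property (d) I would localize by $A_n=\{|\xi|<n\}\in\mathscr F_0$. On $A_n$ the same identification (now with the random initial condition $\xi$) gives $z^\xi=X^{n,\xi}$ a.s. Extending $\varphi\in B_b(T\Bbb S^2)$ by zero to $\tilde\varphi\in B_b(\Bbb R^6)$, the Markov property for the truncated SDE yields
$$
\Bbb E[\tilde\varphi(X^{n,\xi}(t))\mid\mathscr F_s]=(Q_{t-s}^n\tilde\varphi)(X^{n,\xi}(s))\quad\textrm{a.s.,}
$$
and on $A_n$ this reduces to $\Bbb E[\varphi(z^\xi(t))\mid\mathscr F_s]=(P_{t-s}\varphi)(z^\xi(s))$, because for any deterministic $y\in T\Bbb S^2$ with $|y|<n$ the coincidence $z^y=X^{n,y}$ forces $(Q_{t-s}^n\tilde\varphi)(y)=(P_{t-s}\varphi)(y)$, while $z^\xi(s)\in T\Bbb S^2\cap\{|\cdot|<n\}$ a.s.\ on $A_n$. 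Since $\bigcup_n A_n=\Omega$ up to a null set, (d) follows, and the semigroup property (b) is then a direct consequence of (d). The only genuinely conceptual step is noticing that norm conservation lets the cutoff radius $R$ be chosen locally in $x$; after that point everything is routine truncation bookkeeping built on \cite[Section 9.2.1]{dapzab_1992}, and I expect no serious obstacle.
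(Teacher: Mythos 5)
Your proposal is correct and follows essentially the same route as the paper: truncate the coefficients to compactly supported fields, use the norm conservation $|z^x(t)|=|x|$ from Proposition \ref{gext} to identify $z^x$ with the solution of the truncated equation, transfer joint continuity via Tietze extension and \eqref{joicont}, and localize the initial condition on the events where it is bounded to transfer the Markov property. The only cosmetic difference is that the paper replaces $\xi$ by the globally bounded $\xi_k=(\xi^1,\xi^2\mathbf 1_{[|\xi^2|\le k]})$ rather than arguing directly on $A_n=\{|\xi|<n\}$, but both rest on the same locality of solutions with respect to $\mathscr F_0$-measurable restrictions of the initial datum.
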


\begin{proof}
Let us prove the joint continuity assertion first. Assume that $(t_n,x_n)\to(t,x)$ in $[0,\infty)\times T\Bbb S^2$ and let $\sup_n|x_n|\le l$. Let $\tilde f$, $\tilde g$ be compactly supported $C^\infty$ vector fields on $\Bbb R^6$ so that $f=\tilde f$ and $g=\tilde g$ on the ball of radius $l$ in $\Bbb R^6$. Now $|z^{x_n}(t)|=|x_n|\le l$ and $|z^x(t)|=|x|\le l$ holds for every $t\ge 0$ a.s. by Proposition \ref{gext} and hence $z^{x_n}$, $z^{x}$ are also solutions to the equation 
$$
dX=\tilde f(X)\,dt+\tilde g(X)\circ dW.
$$
So, if $\varphi\in C_b(T\Bbb S^2)$ and $\tilde\varphi\in C_b(\Bbb R^6)$ is any extension of $\varphi$ (which always exists by the Tietze theorem) then
$$
\lim_{n\to\infty}P_{t_n}\varphi(x_n)=\lim_{n\to\infty}\Bbb E\,\tilde\varphi(z_n(t_n))=\Bbb E\,\tilde\varphi(z(t))=P_t\varphi(x)
$$
by \eqref{joicont}. 

To prove the Markov property, let $\xi=(\xi^1,\xi^2)$ be a $T\Bbb S^2$-valued initial condition and define $\xi_k=(\xi^1,\xi^2\mathbf 1_{[|\xi^2|\le k]})$. Then $\xi_k$ take values in $T\Bbb S^2$ and by Proposition \ref{gext}, $|z^{\xi_k}(t)|=|\xi_k|\le\sqrt{1+k^2}$. Let $\tilde f$, $\tilde g$ be compactly supported $C^\infty$ vector fields on $\Bbb R^6$ so that $f=\tilde f$ and $g=\tilde g$ on the ball of radius $\sqrt{1+k^2}$ in $\Bbb R^6$ and define $Q_t\phi(y)=\Bbb E\,\phi(u^y(t))$ for $\phi\in B_b(\Bbb R^6)$, $y\in\Bbb R^6$, $t\ge 0$ and $u^y$ the solutions to $dX=\tilde f(X)\,dt+\tilde g(X)\circ dW$, $X(0)=y$. By the first part of the proof, we know that $P_t\varphi(x)=Q_t\tilde\varphi(x)$ holds for every $x\in T\Bbb S^2$ such that $|x|\le\sqrt{1+k^2}$, $\varphi\in B_b(T\Bbb S^2)$, $\tilde\varphi\in B_b(\Bbb R^6)$, $\varphi=\tilde\varphi$ on $T\Bbb S^2$ and $t\ge 0$. 

Now $z^{\xi_k}=u^{\xi_k}$ and if we define $A_k=[|\xi^2|\le k]$ and $\tilde\varphi\in B_b(\Bbb R^6)$ extends $\varphi\in B_b(T\Bbb S^2)$ then
$$
\mathbf 1_{A_k}\Bbb E\,[\varphi(z^\xi(t))|\mathscr F_s]=\Bbb E\,[\mathbf 1_{A_k}\varphi(z^\xi(t))|\mathscr F_s]=\Bbb E\,[\mathbf 1_{A_k}\varphi(z^{\xi_k}(t))|\mathscr F_s]=\mathbf 1_{A_k}\Bbb E\,[\varphi(z^{\xi_k}(t))|\mathscr F_s]=
$$
$$
\mathbf 1_{A_k}\Bbb E\,[\tilde\varphi(u^{\xi_k}(t))|\mathscr F_s]=\mathbf 1_{A_k}(Q_{t-s}\tilde\varphi)(u^{\xi_k}(s))=\mathbf 1_{A_k}(P_{t-s}\varphi)(z^{\xi_k}(s))=\mathbf 1_{A_k}(P_{t-s}\varphi)(z^{\xi}(s))\textrm{ a.s.}
$$
by the Markov property of solutions of the equation \eqref{lipcoe}. To obtain the result, let $k\to\infty$.
\end{proof}

\section{Multitude of invariant measures}

Now we are ready to prove that the equation \eqref{gsde} and, consequently, also the equation \eqref{sswe} have many invariant measures due to the geometric nature of the equation.

\begin{definition}\label{basicdef} Let $Y$ be a Polish space, $r_{t,x}(\cdot)$ probability measures on $\mathscr B(Y)$ indexed by $(t,x)\in[0,\infty)\times Y$ such that $r_{t,x}(A)$ is jointly measurable in $(t,x)$ on $[0,\infty)\times Y$ for every $A\in\mathscr B(Y)$ and the operators 
$$
R_t\varphi(x)=\int_Y\varphi\,dr_{t,x},\qquad \varphi\in B_b(Y),\,\,t\ge 0
$$
satisfy the semigroup property on $B_b(Y)$. We introduce the adjoint endomorphisms $R^*_t$ acting on the space of probability measures on $\mathscr B(Y)$
$$
R^*_t\nu(A)=\int_Yr_{t,x}(A)\,d\nu(x),\qquad t\ge 0,\,\,A\in\mathscr B(Y).
$$
A probability measure $\nu$ on $\mathscr B(Y)$ is called {\it invariant} provided that
$$
R^*_t\nu=\nu\mbox{ for all }t\ge 0\textrm{ and }A\in\mathscr B(Y).
$$
A probability measure on $\mathscr B(Y)$ is called {\it ergodic} provided that it is an extreme point in the convex set of invariant probability measures.
\end{definition}

\begin{remark} To make the meaning of the above definition clear, apply the Markov property in Proposition \ref{markov} with  $s=0$. If $\xi$ is an $\mathscr F_0$-measurable $T\Bbb S^2$-valued random variable with a distribution $\nu$ then $P^*_t\nu$ is the law of $z^\xi(t)$.
\end{remark}

At this moment, we introduce subsets of the tangent bundle $T\Bbb S^2$
\begin{equation}\label{submanem}
M_r=\{(u,v)\in T\Bbb S^2:|v|=r\},\qquad r\ge 0.
\end{equation}

\begin{remark}[Invariance] If $r>0$ and $x\in M_r$ then $z^x(t)\in M_r$ for every $t\ge 0$ almost surely. If $|u|=1$ then $z^{(u,0)}(t)=(u,0)$ for every $t\ge 0$ almost surely. These conclusions follow directly from Proposition \ref{gext}.
\end{remark}

\begin{corollary} Let $r>0$. For every $t\ge 0$, $P_t$ is an endomorphism on $B_b(M_r)$.
\end{corollary}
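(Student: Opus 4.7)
The plan is to reduce this corollary to what is already established on all of $T\Bbb S^2$, using the Invariance remark above. The claim requires both that $P_t\varphi(x)=\Bbb E\,\varphi(z^x(t))$ be well-defined for $\varphi\in B_b(M_r)$ and $x\in M_r$, and that the resulting function lie in $B_b(M_r)$.

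The key observation is that $M_r=\{(u,v)\in T\Bbb S^2:|v|=r\}$ is a closed (hence Borel) subset of $T\Bbb S^2$. Given $\varphi\in B_b(M_r)$, I would first extend it to all of $T\Bbb S^2$ by setting $\tilde\varphi=\varphi$ on $M_r$ and $\tilde\varphi=0$ elsewhere; this $\tilde\varphi$ lies in $B_b(T\Bbb S^2)$ with $\|\tilde\varphi\|_\infty=\|\varphi\|_\infty$. The Invariance remark guarantees that for $x\in M_r$ we have $z^x(t)\in M_r$ almost surely, so $\tilde\varphi(z^x(t))=\varphi(z^x(t))$ a.s., and consequently
\begin{equation*}
P_t\varphi(x)=\Bbb E\,\varphi(z^x(t))=\Bbb E\,\tilde\varphi(z^x(t))=(P_t\tilde\varphi)(x),\qquad x\in M_r.
\end{equation*}
That is, $P_t\varphi$ on $M_r$ is the restriction of $P_t\tilde\varphi$ to the Borel subset $M_r$. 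Since $P_t\tilde\varphi\in B_b(T\Bbb S^2)$, and the restriction of a bounded Borel function to a Borel subset is again a bounded Borel function (with the subspace Borel structure), we conclude $P_t\varphi\in B_b(M_r)$.

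I do not foresee a genuine obstacle; the corollary is essentially a bookkeeping consequence of the pathwise invariance of $M_r$ under the dynamics. The only point that deserves a moment of thought is the implicit fact that $P_t$ maps $B_b(T\Bbb S^2)$ into itself, which is used already in the Markov-property statement of Proposition \ref{markov} and follows from the joint continuity of $P_t\varphi$ for $\varphi\in C_b(T\Bbb S^2)$ by the standard $\pi$--$\lambda$ / monotone-convergence argument applied to the transition function $A\mapsto p_{t,x}(A)$ (first check Borel measurability of $x\mapsto p_{t,x}(G)$ for open $G$ via approximation by continuous functions, then extend to all Borel sets, then to simple and finally to bounded Borel $\varphi$).
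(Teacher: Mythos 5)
Your proof is correct and follows exactly the route the paper intends: the corollary is stated without proof as an immediate consequence of the preceding Invariance remark ($z^x(t)\in M_r$ a.s. for $x\in M_r$), and your zero-extension argument is the natural way to spell this out. The only implicit ingredient, that $P_t$ is an endomorphism of $B_b(T\Bbb S^2)$, is indeed part of the Feller/Markov framework already established in Proposition \ref{markov}, so nothing is missing.
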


\begin{corollary}\label{cordelt} Let $x\in M_0$. Then $\delta_x$ is an invariant measure.
\end{corollary}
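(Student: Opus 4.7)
The plan is to unwind the two definitions and invoke the invariance remark that immediately precedes the corollary. Since $x \in M_0$ means $x = (u,0)$ with $|u| = 1$, the remark (which itself is a direct consequence of Proposition \ref{gext}, as $|v(t)| = |v(0)| = 0$ forces $v \equiv 0$, and then $du = v\,dt = 0$ forces $u$ to remain constant) tells us that $z^x(t) = x$ for every $t \ge 0$ almost surely.

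From this pathwise stationarity, the transition kernel at $x$ is trivial: for every $A \in \mathscr{B}(T\Bbb S^2)$ and every $t \ge 0$,
\[
p_{t,x}(A) = \Bbb P[z^x(t) \in A] = \mathbf{1}_A(x) = \delta_x(A).
\]
Plugging this into the definition of $P_t^*$ applied to the Dirac mass $\delta_x$, one obtains
\[
(P_t^* \delta_x)(A) = \int_{T\Bbb S^2} p_{t,y}(A)\,d\delta_x(y) = p_{t,x}(A) = \delta_x(A),
\]
so $P_t^* \delta_x = \delta_x$ for all $t \ge 0$, which is exactly the invariance condition of Definition \ref{basicdef}.

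There is no real obstacle here; the corollary is essentially a restatement of the invariance remark in the language of invariant measures, and the only thing to check is the bookkeeping between the pathwise identity $z^x(t) = x$ and the measure-theoretic identity $P_t^* \delta_x = \delta_x$.
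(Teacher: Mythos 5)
Your proof is correct and follows exactly the route the paper intends: the corollary is an immediate consequence of the Invariance remark ($z^{(u,0)}(t)=(u,0)$ a.s., from Proposition \ref{gext}), and your bookkeeping from the pathwise identity to $P_t^*\delta_x=\delta_x$ is exactly what is needed. Nothing is missing.
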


We are going to prove that there is more to see, than what was disclosed by Corollary \ref{cordelt}, on the sets $M_r$ as far as invariant measures are concerned.

\begin{remark}\label{rest1} Observe that, for every $r>0$, the mappings $f$ and $g$ in \eqref{defvf} are vector fields on the manifold $M_r$.
\end{remark}

In view of Remark \ref{rest1}, we can introduce the following second order differential operator on $M_r$.

\begin{definition} Define the second order differential operator
\begin{equation}\label{defopera}
\mathcal A\varphi=f(\varphi)+\frac 12g(g(\varphi))
\end{equation}
for $\varphi\in C^2(M_r)$ for $r>0$.
\end{definition}

The following result follows from Theorem 3.1 in \cite{ikwa} but we found out that, in this case, it is easier to give a direct prove rather than to check the assumptions in Section 3 in \cite{ikwa}.

\begin{proposition}\label{kolm3} Let $r>0$ and let $\varphi\in C^2(M_r)$. Then $\rho(t,x)=P_t\varphi(x)$ belongs to $C^{1,2}([0,\infty)\times M_r)$ and satisfies the backward Kolmogorov equation
\begin{equation}\label{kolm2}
\frac{\partial\rho}{\partial t}=\mathcal A\rho\quad\textrm{on}\quad[0,\infty)\times M_r,\qquad\rho(0,\cdot)=\varphi.
\end{equation}
On the other hand, if $\rho\in C^{1,2}([0,\infty)\times M_r)$ satisfies \eqref{kolm2} then $\rho(t,x)=P_t\varphi(x)$ on $[0,\infty)\times M_r$.
\end{proposition}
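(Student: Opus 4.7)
The plan is to reduce both assertions to the compactly supported case \eqref{diffprop}-\eqref{kolm1} by truncating the coefficients of \eqref{gsde} outside a neighbourhood of the compact manifold $M_r \subset \mathbb{R}^6$. First I would fix $r>0$ and choose compactly supported $C^\infty$ vector fields $\tilde f,\tilde g$ on $\mathbb{R}^6$ agreeing with $f,g$ on some open ball containing $M_r$; since $M_r$ is a smooth compact submanifold of $\mathbb{R}^6$, a tubular-neighbourhood argument (or Whitney's extension theorem) produces $\tilde\varphi\in C^2(\mathbb{R}^6)$, compactly supported, with $\tilde\varphi|_{M_r}=\varphi$. Let $u^y$ solve $dX=\tilde f(X)\,dt+\tilde g(X)\circ dW$, $X(0)=y$, and write $Q_t\tilde\varphi(y)=\mathbb{E}\,\tilde\varphi(u^y(t))$. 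By Proposition \ref{gext} and the localisation argument already used in the proof of Proposition \ref{markov}, $z^x=u^x$ a.s.\ for every $x\in M_r$, hence $Q_t\tilde\varphi(x)=P_t\varphi(x)$ on $M_r$.

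Next, from \eqref{diffprop}-\eqref{kolm1} applied to $\tilde\varphi$, the function $\tilde\rho(t,y)=Q_t\tilde\varphi(y)$ lies in $C^{1,2}([0,\infty)\times\mathbb{R}^6)$ and satisfies on the whole of $\mathbb{R}^6$ the Stratonovich backward Kolmogorov equation
\begin{equation*}
\partial_t\tilde\rho=\tilde f\cdot\nabla\tilde\rho+\tfrac{1}{2}\,\tilde g\cdot\nabla(\tilde g\cdot\nabla\tilde\rho),\qquad\tilde\rho(0,\cdot)=\tilde\varphi.
\end{equation*}
Restricting to the smooth submanifold $M_r$ yields $\rho=\tilde\rho|_{[0,\infty)\times M_r}\in C^{1,2}([0,\infty)\times M_r)$. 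It remains to identify the ambient operator with $\mathcal A$ on $M_r$; by Remark \ref{rest1} the fields $f,g$ restrict to tangent vector fields on $M_r$, so for any $C^2$ extension $\tilde\psi$ of $\psi\in C^2(M_r)$ one has $(\tilde f\cdot\nabla\tilde\psi)|_{M_r}=f(\psi)$ and, iterating with the tangency of $g$, $(\tilde g\cdot\nabla(\tilde g\cdot\nabla\tilde\psi))|_{M_r}=g(g(\psi))$; consequently $\rho$ solves \eqref{kolm2}. I expect this identification of the ambient Stratonovich generator with the intrinsic $\mathcal A$ (and the verification that it only depends on the values of $\tilde\varphi$ on $M_r$) to be the main technical step that warrants a few careful lines.

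For the converse, I would take $\rho\in C^{1,2}([0,\infty)\times M_r)$ solving \eqref{kolm2}, fix $T>0$ and $x\in M_r$, and apply the Stratonovich Itô formula to $\rho(T-t,z^x(t))$. After extending $\rho(T-t,\cdot)$ from $M_r$ to a $C^2$ function on $\mathbb{R}^6$ so that the classical formula on Euclidean space applies, the drift collapses to
\begin{equation*}
\bigl(-\partial_t\rho+f(\rho)+\tfrac{1}{2}g(g(\rho))\bigr)(T-t,z^x(t))=\bigl(-\partial_t\rho+\mathcal A\rho\bigr)(T-t,z^x(t))=0
\end{equation*}
by hypothesis, so $\rho(T-t,z^x(t))$ is a local martingale. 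Compactness of $M_r$ and continuity of $\rho$ make it bounded on $[0,T]\times M_r$, hence a true martingale, and comparing expectations at $t=0$ and $t=T$ gives $\rho(T,x)=\mathbb{E}\,\varphi(z^x(T))=P_T\varphi(x)$, as required.
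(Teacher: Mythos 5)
Your proposal is correct and follows essentially the same route as the paper: truncate the coefficients to compactly supported fields, extend $\varphi$, invoke \eqref{diffprop}--\eqref{kolm1} and restrict to $M_r$ for the forward direction, and extend $\rho$ and apply the It\^o formula to $\rho(t-s,z^x(s))$ for the converse. The only difference is that you spell out the identification of the ambient Stratonovich generator with $\mathcal A$ via tangency of $f,g$ and the martingale step, both of which the paper leaves implicit.
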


\begin{proof}
Let $k\in\Bbb N$ and let $\tilde f$ and $\tilde g$ be $C^\infty$ vector fields on $\Bbb R^6$ such that $\tilde f=f$ and $\tilde g=g$ on the centered ball in $\Bbb R^6$ of radius $R=\sqrt{1+r^2}$. Denote by $u^x$ the solution of $dX=\tilde f(X)\,dt+\tilde g(X)\circ dW$, $X(0)=x$ and let $Q_t$ be the associated Markov operators. Let $\tilde\varphi\in C^2(\Bbb R^6)$ be a compactly supported extension of $\varphi$. Then $z^x=u^x$ for every $x\in M_r$ by Proposition \ref{gext}, $J(t,x)=Q_t\tilde\varphi(x)\in C^{1,2}([0,\infty)\times\Bbb R^6)$ by \eqref{diffprop}, hence $J(t,x)=\rho(t,x)$ for $(t,x)\in[0,\infty)\times M_r$. In particular, $\rho\in C^{1,2}([0,\infty)\times M_r)$ and \eqref{kolm2} holds by \eqref{kolm1}.

To prove the converse assertion, extend $\rho$ to a function in $C^{1,2}([0,\infty)\times\Bbb R^6)$, let $t>0$ and apply the It\^o formula to $\rho(t-r,z^x(r))$ for $r\in[0,t]$, obtaining
$$
\varphi(z^x(t))=\rho(0,z^x(t))=\rho(t,x)+\int_0^tg(\rho)(t-r,z^x(r))\,dW.
$$
Taking expectations on both sides yields the claim.
\end{proof}

The next assertion is obvious if $Q\in\Bbb R^3\otimes\Bbb R^3$ is a unitary matrix with $\operatorname{det}Q=1$ due to the invariance of the equation \eqref{gsde} for positively oriented unitary matrices. But it also holds if $\operatorname{det}Q=-1$. To prove this, we are going to use the uniqueness of the solutions of the backward Kolmogorov equation.

\begin{corollary}\label{rotations} Let $Q$ be a $3\times 3$-unitary matrix. Denote by $\widetilde Q=\operatorname{diag}\,[Q,Q]\in\Bbb R^6\otimes\Bbb R^6$. Then 
$$
p(t,\widetilde Qx,A)=p(t,x,[\widetilde Q\in A])
$$
holds for every $(t,x)\in[0,\infty)\times M_r$, every $A\in\mathscr B(M_r)$ and every $r>0$.
\end{corollary}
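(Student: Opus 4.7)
The plan is to apply the uniqueness clause of Proposition~\ref{kolm3}, reducing the corollary to an algebraic equivariance of the generator $\mathcal{A}$ under pullback by $\widetilde Q$. Since $Q$ is unitary, $\widetilde Q$ restricts to a diffeomorphism of $M_r$ (it preserves $|u|$, $\langle u,v\rangle$ and $|v|$), so for any $\varphi\in C^2(M_r)$ the pullback $\varphi\circ\widetilde Q$ lies in $C^2(M_r)$ as well. Set
$$
\tilde\rho(t,x):=(P_t\varphi)(\widetilde Q x),\qquad\rho_Q(t,x):=P_t(\varphi\circ\widetilde Q)(x).
$$
Both functions belong to $C^{1,2}([0,\infty)\times M_r)$ by Proposition~\ref{kolm3} and the chain rule, and both equal $\varphi\circ\widetilde Q$ at $t=0$. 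It therefore suffices to verify that $\tilde\rho$ solves $\partial_t\tilde\rho=\mathcal{A}\tilde\rho$, since the uniqueness half of Proposition~\ref{kolm3} will then force $\tilde\rho=\rho_Q$, i.e.\ $P_t\varphi(\widetilde Q x)=P_t(\varphi\circ\widetilde Q)(x)$. The identity then extends from $C^2(M_r)$ to $C_b(M_r)$ by uniform approximation on the compact manifold $M_r$, and to bounded Borel $\varphi$ by a routine monotone class argument; specialising to $\varphi=\mathbf{1}_A$ yields the displayed equality of transition probabilities.

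\textbf{Key computation.} Writing $\psi_Q:=\psi\circ\widetilde Q$ for a smooth $\psi$ and using the chain rule in the form $V(\psi_Q)(x)=\langle\nabla\psi(\widetilde Q x),\widetilde Q V(x)\rangle$ for any vector field $V$, the required equation reduces to the pointwise identity $\mathcal{A}(\psi\circ\widetilde Q)=(\mathcal{A}\psi)\circ\widetilde Q$. A direct computation from \eqref{defvf}, using $|Qv|=|v|$ and the standard identity $Qu\times Qv=(\det Q)\,Q(u\times v)$ for orthogonal $Q$, gives
$$
\widetilde Q f(x)=f(\widetilde Q x),\qquad \widetilde Q g(x)=(\det Q)\,g(\widetilde Q x).
$$
Consequently $f(\psi_Q)=(f\psi)\circ\widetilde Q$ and $g(\psi_Q)=(\det Q)\,(g\psi)\circ\widetilde Q$. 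Applying $g$ a second time introduces another factor of $\det Q$, and since $(\det Q)^2=1$ the signs cancel: $g(g(\psi_Q))=(g(g\psi))\circ\widetilde Q$. Adding yields the desired equivariance of $\mathcal{A}$.

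\textbf{Expected obstacle.} The case $\det Q=1$ could alternatively be handled by pathwise uniqueness applied to $\widetilde Q z^x$; the substantive point is the case $\det Q=-1$, where the SDE is \emph{not} pathwise equivariant because the sign of the diffusion $g$ flips under $\widetilde Q$, and it is precisely the \emph{quadratic} appearance of $g$ in $\mathcal{A}$ that rescues the argument. This cancellation is the only delicate step; the equivariance of $f$, the regularity membership, and the monotone class extension to Borel sets are routine given Propositions~\ref{markov} and~\ref{kolm3}.
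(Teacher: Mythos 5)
Your proposal is correct and follows essentially the same route as the paper: both reduce the claim to the identity $P_t\varphi(\widetilde Qx)=P_t(\varphi\circ\widetilde Q)(x)$ via the uniqueness clause of Proposition~\ref{kolm3}, then extend from $C^2(M_r)$ to bounded Borel functions. Your explicit verification of the equivariance $\mathcal A(\psi\circ\widetilde Q)=(\mathcal A\psi)\circ\widetilde Q$ — in particular the cancellation of $\det Q$ through the quadratic appearance of $g$ — is exactly the step the paper leaves implicit in its displayed equation, so you have supplied the one detail the published argument glosses over.
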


\begin{proof} Let $\varphi\in C^2(M_r)$ and define $\rho(t,x)=P_t\varphi(x)$ for $(t,x)\in[0,\infty)\times M_r$. Then $\rho$ verifies \eqref{kolm2}. Now define $\varrho(t,x)=\rho(t,\widetilde Qx)$ for $(t,x)\in[0,\infty)\times M_r$ which we can do since $\widetilde Q$ is a diffeomorphism on $M_r$. Then $\varrho\in C^{1,2}([0,\infty)\times M_r)$ and
$$
\frac{\partial\varrho}{\partial t}(t,x)-\mathcal A\varrho(t,x)=\frac{\partial\rho}{\partial t}(t,\widetilde Qx)-\mathcal A\rho(t,\widetilde Qx)=0\quad\textrm{on}\quad[0,\infty)\times M_r,\qquad\varrho(0,\cdot)=\varphi(\widetilde Q\cdot).
$$
So, form the uniqueness part of Proposition \ref{kolm3}, we obtain that 
\begin{equation}\label{thetran}
P_t\varphi(\widetilde Qx)=P_t(\varphi\circ\widetilde Q)(x)\quad\textrm{on}\quad [0,\infty)\times M_r.
\end{equation}
By density of $C^2(M_r)$ in $C(M_r)$ we get that \eqref{thetran} holds for every $\varphi\in C(M_r)$ and consequently for every $\varphi\in B_b(M_r)$.
\end{proof}

Now we are ready to describe some analytic properties of the Markov semigroup $(P_t)$ on $M_r$.

\begin{theorem}\label{semigrprop} Let $r>0$. Then $(P_t)$ is a $C_0$-semigroup on $C(M_r)$, $P_t[C^2(M_r)]\subseteq C^2(M_r)$, $C^2(M_r)$ is contained in the domain of the infinitesimal generator $A$ of $(P_t)$ and $A=\mathcal A$ on $C^2(M_r)$.
\end{theorem}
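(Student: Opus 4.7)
The plan is to decompose the theorem into the three separate assertions and dispose of them in order, drawing on two principal inputs already established: the joint continuity of $(t,x)\mapsto P_t\varphi(x)$ on $[0,\infty)\times M_r$ for $\varphi\in C_b(T\Bbb S^2)$ (Proposition~\ref{markov}), and, for $\varphi\in C^2(M_r)$, the $C^{1,2}$-regularity of $\rho(t,x)=P_t\varphi(x)$ together with the backward Kolmogorov equation $\partial_t\rho=\mathcal A\rho$ (Proposition~\ref{kolm3}). The further decisive structural feature is that $M_r$ is a \emph{compact} subset of $\Bbb R^6$, which upgrades joint or pointwise continuity statements to uniform ones without extra work.

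To show that $(P_t)$ is a $C_0$-semigroup on $C(M_r)$, I first observe that $P_t$ maps $C(M_r)$ into itself and is a contraction: the former is immediate from Proposition~\ref{markov}, the latter from $|P_t\varphi(x)|\le\|\varphi\|_\infty$. The semigroup property is the content of the Markov property. The only real point is strong continuity at $0$: for $\varphi\in C(M_r)$, the function $(t,x)\mapsto P_t\varphi(x)-\varphi(x)$ is continuous on the compact set $[0,1]\times M_r$ and vanishes on $\{0\}\times M_r$, so it is uniformly continuous there, giving $\|P_t\varphi-\varphi\|_{C(M_r)}\to 0$ as $t\downarrow 0$.

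The inclusion $P_t[C^2(M_r)]\subseteq C^2(M_r)$ is an immediate reading of Proposition~\ref{kolm3}: if $\varphi\in C^2(M_r)$, then $\rho\in C^{1,2}([0,\infty)\times M_r)$, and in particular $\rho(t,\cdot)\in C^2(M_r)$ for every $t\ge 0$. For the identification of the generator, I would use the integrated Kolmogorov equation
\begin{equation*}
\frac{P_t\varphi(x)-\varphi(x)}{t}=\frac{1}{t}\int_0^t\partial_s\rho(s,x)\,ds=\frac{1}{t}\int_0^t\mathcal A\rho(s,x)\,ds
\end{equation*}
and then argue that the integrand $(s,x)\mapsto\mathcal A\rho(s,x)=\partial_s\rho(s,x)$ is jointly continuous on $[0,1]\times M_r$ (this is exactly what $C^{1,2}$-regularity says) with value $\mathcal A\varphi(x)$ at $s=0$. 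Uniform continuity on the compact set then yields $\sup_{x\in M_r}|\mathcal A\rho(s,x)-\mathcal A\varphi(x)|\to 0$ as $s\downarrow 0$, and uniform convergence is preserved by averaging. Hence $(P_t\varphi-\varphi)/t\to\mathcal A\varphi$ in $C(M_r)$, which simultaneously places $\varphi$ in the domain of $A$ and forces $A\varphi=\mathcal A\varphi$.

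The only genuine obstacle is this passage from the pointwise Kolmogorov equation to the uniform convergence of the difference quotient that the definition of the infinitesimal generator demands. On a non-compact state space this would typically require a priori estimates on the spatial derivatives of $\rho$ uniformly as $t\downarrow 0$; here, however, the compactness of $M_r$ collapses the question to uniform continuity of a single continuous function on a compact set once the integral form of the Kolmogorov equation is invoked, so the difficulty is essentially absorbed into Proposition~\ref{kolm3}.
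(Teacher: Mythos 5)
Your argument is correct, and all three assertions are handled soundly; the one place where you diverge from the paper is the identification of the generator. The paper applies the It\^o formula directly to $\varphi(z^x(s))$ and takes expectations to get the Dynkin-type identity
$$
P_t\varphi(x)=\varphi(x)+\int_0^tP_s(\mathcal A\varphi)(x)\,ds,
$$
with the operator $\mathcal A$ \emph{inside} the semigroup; dividing by $t$, the uniform convergence of the difference quotient to $\mathcal A\varphi$ then follows from the strong continuity of $(P_t)$ applied to the single function $\mathcal A\varphi\in C(M_r)$, with no compactness argument and no appeal to the smoothness of $P_s\varphi$. You instead integrate the backward Kolmogorov equation of Proposition~\ref{kolm3}, obtaining $P_t\varphi-\varphi=\int_0^t\mathcal A(P_s\varphi)\,ds$ with $\mathcal A$ acting \emph{outside}, and then use uniform continuity of $\partial_s\rho$ on the compact set $[0,1]\times M_r$ to pass to the uniform limit. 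Both routes are valid; yours leans on the heavier input (the $C^{1,2}$-regularity of $\rho$, which you in any case need for the invariance of $C^2(M_r)$), while the paper's version needs only that $\varphi\in C^2$ and that the semigroup is $C_0$, and is the one that generalizes to non-compact state spaces, where your uniform-continuity step would indeed require extra a priori bounds, as you yourself note. The $C_0$-property and the $C^2$-invariance are treated as in the paper (the paper merely cites Propositions~\ref{markov} and~\ref{kolm3} for these), and your explicit uniform-continuity argument for strong continuity at $0$ is the standard way to fill in that citation.
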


\begin{proof}
The $C_0$ property follows from the joint continuity in Proposition \ref{markov} and the invariance of $C^2(M_r)$ under $P_t$ from Proposition \ref{kolm3}. By the It\^o formula, 
$$
P_t\varphi(x)=\varphi(x)+\int_0^tP_s(\mathcal A\varphi)(x)\,ds,\qquad t\ge 0,\,\, x\in M_r,
$$
so $\varphi$ belongs to the domain of the infinitesimal generator $A$ of $(P_t)$ and $A\varphi=\mathcal A\varphi$.
\end{proof}

\begin{corollary}\label{invmr} Let $r>0$. Then there exists an invariant measure with support in $M_r$.
\end{corollary}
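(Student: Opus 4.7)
My plan is to invoke the classical Krylov--Bogolyubov procedure, whose hypotheses have essentially all been verified in the preceding results.

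First, I would record that $M_r$ is a compact subset of $\mathbb{R}^6$, as it is the intersection of $T\mathbb{S}^2$ with the closed ball of radius $\sqrt{1+r^2}$ (equivalently, with the sphere $|v|=r$). From the invariance of $M_r$ under the stochastic flow (noted in the Remark on Invariance, itself a consequence of Proposition \ref{gext}), together with the joint continuity of $(t,x)\mapsto P_t\varphi(x)$ established in Proposition \ref{markov}, the family $(P_t)$ restricts to a Feller Markov semigroup on the compact Polish space $M_r$; indeed Theorem \ref{semigrprop} already asserts that $(P_t)$ is a $C_0$-semigroup on $C(M_r)$.

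Next, I would fix any point $x_0 \in M_r$ and define the Cesàro averages
\begin{equation*}
\mu_T(A) \;=\; \frac{1}{T}\int_0^T p_{t,x_0}(A)\,dt, \qquad A\in\mathscr{B}(M_r),\ T>0.
\end{equation*}
Each $\mu_T$ is a Borel probability measure on the compact metric space $M_r$. Since the space of Borel probability measures on a compact metric space is itself compact in the weak-$*$ topology (Prokhorov's theorem, tightness being automatic here), I can extract a sequence $T_n \to \infty$ such that $\mu_{T_n} \to \nu$ weakly for some probability measure $\nu$ on $M_r$.

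Finally, I would verify invariance of $\nu$. For any $\varphi \in C(M_r)$ and any $s \ge 0$, a standard manipulation using the semigroup property (and the fact that $P_s\varphi \in C(M_r)$ by the Feller property) yields
\begin{equation*}
\left|\int_{M_r} P_s\varphi\,d\mu_T - \int_{M_r}\varphi\,d\mu_T\right| \;=\; \frac{1}{T}\left|\int_T^{T+s}\!\!P_t\varphi(x_0)\,dt - \int_0^s P_t\varphi(x_0)\,dt\right| \;\le\; \frac{2s\|\varphi\|_\infty}{T},
\end{equation*}
which tends to $0$ as $T \to \infty$. Passing to the limit along $T_n$ gives $\int P_s\varphi\,d\nu = \int \varphi\,d\nu$ for every $\varphi \in C(M_r)$ and every $s\ge 0$, hence $P_s^*\nu = \nu$. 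By construction $\nu$ is supported in $M_r$, completing the argument.

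I do not anticipate a genuine obstacle here: compactness of $M_r$ and the Feller property make Krylov--Bogolyubov run essentially automatically; the only point requiring a moment's attention is recording that $M_r$ is invariant under the dynamics so that $P_t$ may legitimately be viewed as a semigroup on $C(M_r)$ rather than merely on $C(T\mathbb{S}^2)$.
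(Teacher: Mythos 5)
Your argument is correct and is essentially the paper's own proof: the paper also applies the Krylov--Bogolyubov procedure to the Ces\`aro averages $\frac{1}{T}\int_0^T P_s^*\theta\,ds$ of an initial law supported in $M_r$, using the Feller property and the invariance/compactness of $M_r$ for tightness. You merely spell out the standard limit computation that the paper delegates to a citation.
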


\begin{proof}
Let $\theta$ be a Borel probability measure with support in $M_r$. The semigroup $(P_t)$ is Feller on $B_b(T\Bbb S^2)$, the average probability measures $\frac{1}{T}\int_0^TP^*_s\theta\,ds$ are supported in $M_r$, hence they are tight and therefore any of its weak cluster points is an invariant probability measure according to the Krylov-Bogolyubov theorem, see e.g. Corollary 3.1.2 in \cite{daza}.
\end{proof}

We have proved so far that the tangent bundle $T\Bbb S^2$ decomposes to invariant sets 
$$
T\Bbb S^2=\bigcup_{x\in M_0}\{x\}\cup\bigcup_{r>0}M_r
$$
where on each if these sets there exists an invariant measure.

\section{Numerical simulations}\label{numsoll}

In this section, we present a numerical algorithm for approximating the solutions of \eqref{gsde} and consequent simulations that lead us to conjecture that $(P^*_t)$ restricted to $M_r$ attracts every initial distribution on $M_r$ to the normalized surface measure on $M_r$. In particular, this would mean that the normalized surface measure on $M_r$ is the unique invariant measure on $M_r$, cf. Corollary \ref{invmr}.

\subsection{Numerical approximation}\label{napprox2}
We propose a non-dissipative, symmetric discretization of \eqref{seqito} to construct strong solutions and numerically study long-time asymptotics. Let $\{ (U^n, V^n)\}_{n}$ be
approximate iterates of $\{(u(t_n), \dot{u}(t_n))\}_n$ on an equi-distant mesh $I_k$ of size $k>0$,
covering $[0,T]$. We denote $d_t \varphi^{n+1} := \frac{1}{k} (\varphi^{n+1} - \varphi^n)$. \\

\textbf{Algorithm.} Let $(U^0, V^0) := \bigl(u_0, \dot{u}(0)\bigr)$, and $U^{-1} := U^0 - k V^0$.
For $n \geq 0$, find $(U^{n+1}, V^{n+1}, \lambda^{n+1}) \in {\mathbb R}^{3+3+1}$,
such that for $\Delta W_{n+1} := W(t_{n+1}) - W(t_n) \sim {\mathscr N}(0,k)$ holds

\begin{eqnarray}\nonumber
V^{n+1}-V^n &=& k \frac{\lambda^{n+1}}{2} (U^{n+1} + U^{n-1}) + \frac{1}{4}(U^{n+1} + U^{n-1}) \times
(V^{n+1} + V^{n}) \Delta W_{n+1} \\ \label{forma1}
d_t U^{n+1} &=& V^{n+1} \\ \nonumber
 \lambda^{n+1} &=&
\left\{ \begin{array}{ll}
0 & \mbox{ for } \frac{1}{2}(U^{n+1} + U^{n-1}) = 0\,, \\
- \frac{(V^n, V^{n-1})}{\vert \frac{1}{2}(U^{n+1} + U^{n-1})\vert^2}
 & \mbox{ for } \frac{1}{2}(U^{n+1} + U^{n-1}) \neq 0 \mbox{ and } n \geq 1\,,  \\
 - \frac{(V^0, V^{1}) - \frac{1}{2} \vert V^0\vert^2}{\vert \frac{1}{2}(U^{n+1} + U^{n-1})\vert^2}
 & \mbox{ for } \frac{1}{2}(U^{1} + U^{-1}) \neq 0 \mbox{ and } n = 0\,.
\end{array}  \right.
\end{eqnarray}
The choice of the Lagrange multiplier $\lambda^{n+1}$ ensures that $\vert U^{n+1}\vert = 1$ for $n \geq 0$; the
case $n=0$ has to compensate for the fact that the defined $U^{-1}$ is not
necessarily of unit length.

To see the formula (\ref{forma1})$_3$ for $n \geq 1$, we multiply
(\ref{forma1}) with $\frac{1}{2}(U^{n+1} + U^{n-1})$ and use the discrete product formula
$$ (d_t \varphi^n, \psi^n) = - (\varphi^{n-1}, d_t \psi^n) + d_t (\varphi^n, \psi^n)$$
to find
$$ \frac{1}{2} (d_t V^{n+1}, U^{n+1} + U^{n-1}) = -\frac{1}{2} (V^n, V^{n+1}+V^{n-1}) +
\frac{1}{2} d_t \bigl( V^{n+1}, U^{n+1} + U^n - U^n + U^{n-1}\bigr)\, ,$$
where $(\cdot, \cdot)$ denotes the scalar product in ${\mathbb R}^3$, and $\vert \cdot \vert =
(\cdot, \cdot)^{1/2}$.
Since $(V^{n+1}, U^{n+1} + U^{n}) = 0$, we further obtain
\begin{eqnarray*}
&=& -\frac{1}{2} \Bigl( (V^n, V^{n+1} + V^{n-1}) + k d_t(V^{n+1},-V^n)\Bigr) \\
&=& -\frac{1}{2} \Bigl( (V^n, V^{n+1} + V^{n-1}) - (V^{n+1},V^n) + (V^n, V^{n-1})\Bigr) = -(V^n, V^{n-1})\, .
\end{eqnarray*}
Hence $-(V^n, V^{n-1}) = \lambda^{n+1} \vert \frac{1}{2}(U^{n+1} + U^{n-1})\vert^2$, which yields the formula for
$\lambda^{n+1}$ in (\ref{forma1}).

For $n = 0$, we conclude similarly, using $\langle U^0, V^0\rangle = 0$, and the definition of
$U^{-1}$.

\begin{theorem}\label{numscheme1}
Let $T >0$, and $k \leq k_0(U^0,V^0)$ be sufficiently small. For every $n \geq 0$, there exist unique ${\mathbb R}^{3+3}$-valued random variables $(U^{n+1},V^{n+1})$ of Algorithm such that $\vert U^{n+1} \vert = 1$ for all $n \geq 0$, and 
$$ E(V^{n+1}) = E(V^0) \qquad \mbox{where } E(\varphi) = \frac{1}{2} \vert \varphi\vert^2\, .$$
Define processes $({\mathscr U}^{\pm}_{k'}, {\mathscr V}_{k'}^{\pm})$ from the iterates $\{ (U^{n+1}, V^{n+1})\}_{n\geq 0}$ according to the following prescription: for ${\mathbb R}^3$-valued iterates $\{\varphi^n\}_{n \geq 0}$ on the mesh $I_k$ that covers $[0,T]$ define for every $t \in [t_n, t_{n+1})$ functions
\begin{eqnarray*}
{\mathscr \varphi}_k(t) &:=& \frac{t-t_n}{k} \varphi^{n+1} + \frac{t_{n+1}-t}{k} \varphi^n\,, \\
{\mathscr \varphi}_k^-(t) &:=& \varphi^n\,, \qquad \mbox{and } {\mathscr \varphi}^+_k(t) := \varphi^{n+1}\, .
\end{eqnarray*}
Then $({\mathscr U}^{\pm}_{k'}, {\mathscr V}_{k'}^{\pm}) \rightarrow (u,v)$ in $C \bigl([0,T]; {\mathbb R}\bigr)$ as $k' \rightarrow 0)$ a.s. where $(u,v)$ is strong solution of (\ref{forma1}). Moreover, the stochastic forcing term exerts a damping in direction $\mathscr V^{n+1}_{k'}$.
\end{theorem}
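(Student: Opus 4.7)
My plan is to address the four assertions of the theorem in the order (a) well-posedness of the implicit step, (b) the invariants $|U^{n+1}|=1$ and $E(V^{n+1})=E(V^0)$, (c) tightness and passage to the limit of the interpolants, (d) the stochastic damping interpretation.

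For (a) I substitute $U^{n+1}=U^n+kV^{n+1}$ from (\ref{forma1})$_2$ into (\ref{forma1})$_1$. The multiplier $\lambda^{n+1}$ then becomes an explicit function of $V^n$, $V^{n-1}$ and $U^n+kV^{n+1}$, so the whole system collapses to a single vector equation in the unknown $V^{n+1}\in\mathbb R^3$. For $k$ sufficiently small depending on $|V^0|$, the right-hand side is a contraction on a fixed ball in $\mathbb R^3$, with contraction constant of order $k+\sqrt k\,|\Delta W_{n+1}|$; Banach's fixed-point theorem then gives unique $\mathscr F_{t_{n+1}}$-measurable iterates, and the invariants from (b) keep $k_0$ independent of $n$. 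For (b), both assertions are inductive and were essentially carried out in the text: multiplying (\ref{forma1})$_1$ by $\tfrac12(U^{n+1}+U^{n-1})$, using that the cross product is orthogonal to this factor, and applying the discrete product formula produces $(V^{n+1},\tfrac12(U^{n+1}+U^n))=0$, which is the differential form of $|U^{n+1}|=|U^n|$, and hence the sphere constraint. Energy conservation follows by taking the scalar product of (\ref{forma1})$_1$ with $V^{n+1}+V^n$: again the cross-product term drops by antisymmetry, and the $\lambda^{n+1}$-term vanishes thanks to the sphere identity just established, giving the discrete analogue of the pathwise conservation $|v(t)|=|v(0)|$ from Proposition~\ref{gext}.

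For (c), the uniform bounds $|\mathscr U^\pm_{k'}|=1$ and $|\mathscr V^\pm_{k'}|=|V^0|$ give $L^\infty$ boundedness of the interpolants, while $|d_t U^{n+1}|\le |V^0|$ and a standard martingale moment estimate for the discrete stochastic sum in $\mathscr V_{k'}$ give equicontinuity. Tightness in $C([0,T];\mathbb R^6)$ then follows from Arzel\`a--Ascoli together with a Kolmogorov-type criterion. A Skorohod representation yields an a.s.\ convergent subsequence whose limit $(u,v)$ takes values in $T\mathbb S^2$ (inherited from the discrete sphere and tangency relations). I would then identify $(u,v)$ as a strong solution of (\ref{gsde}); pathwise uniqueness of (\ref{gsde}) on $T\mathbb S^2$ (obtained by the truncation argument already used in the proof of Proposition~\ref{markov}) upgrades convergence in law to convergence in probability, hence a.s.\ along $k'\to 0$.

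The main obstacle lies inside (c), in the limit identification. The scheme is symmetric both in time ($\tfrac12(V^{n+1}+V^n)$) and in space ($\tfrac12(U^{n+1}+U^{n-1})$), so its stochastic term is a Stratonovich-type rather than an It\^o-type approximation of $(u\times v)\circ dW$. The induced discrete It\^o--Stratonovich correction is precisely the $-\tfrac12 v$ drift appearing in (\ref{seqito}); this is what the statement means by ``damping in direction $\mathscr V^{n+1}_{k'}$'' and is what (d) ultimately records. Making this passage rigorous will require a careful Taylor expansion of $\lambda^{n+1}$ and of the midpoint factors in powers of $k$ and $\Delta W_{n+1}$, in the spirit of Wong--Zakai, using the preserved a priori bounds $|U^n_{k'}|=1$ and $|V^n_{k'}|=|V^0|$ to control remainders uniformly in $k'$.
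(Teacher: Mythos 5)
Your steps (b)--(d) track the paper's argument closely: the sphere constraint and energy identity are obtained exactly as you describe (the paper tests (\ref{forma1})$_1$ with $\tfrac12(U^{n+1}-U^{n-1})=\tfrac k2(V^{n+1}+V^n)$, so the cross-product term drops by antisymmetry and the $\lambda$-term by the binomial identity), and the limit identification indeed hinges on substituting the scheme into its own stochastic term to extract the correction $\tfrac12\int_0^t u\times(u\times v)\,ds=-\tfrac12\int_0^t v\,ds$, which is precisely the damping recorded in Remark~\ref{am1}. Your Skorohod/pathwise-uniqueness route in (c) is a workable alternative to the paper's direct pathwise Arzel\`a--Ascoli extraction (possible here because the bounds $|U^n|=1$, $|V^n|=|V^0|$ are deterministic and hold for every $\omega$); the paper reserves the Prohorov--Skorokhod machinery for non-Gaussian increment approximations.

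The genuine gap is in step (a). The map $V^{n+1}\mapsto V^n+k\tfrac{\lambda^{n+1}}2(\cdot)+\tfrac14(\cdot)\times(V^{n+1}+V^n)\Delta W_{n+1}$ has Lipschitz constant in $V^{n+1}$ of order $k+|\Delta W_{n+1}|$, and $\Delta W_{n+1}\sim\mathscr N(0,k)$ is unbounded in $\omega$; writing the contraction constant as $k+\sqrt k\,|\Delta W_{n+1}|$ treats the increment as if it were pathwise of size $\sqrt k$, which it is not. Hence for any deterministic $k_0(U^0,V^0)$ there is an event of positive probability on which the map is not a contraction, and Banach's theorem does not apply; no choice of $k_0$ depending only on the initial data can fix this. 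The paper circumvents exactly this by a Brouwer fixed-point (acute-angle) argument: since the noise enters as $W\times(\dots)$, it is orthogonal to $W$ and vanishes from $\bigl(\mathcal F^\omega_\epsilon(W),W\bigr)$, so coercivity on a large sphere holds for \emph{every} $\omega$ regardless of the size of $\Delta W_{n+1}$; uniqueness is then established separately by an energy/Gronwall argument. A second omission in your (a): the multiplier $\lambda^{n+1}$ divides by $|\tfrac12(U^{n+1}+U^{n-1})|^2$, which is not a priori bounded away from zero on your fixed ball; the paper handles this with the $\epsilon$-regularization $\max\{\,|W|^2,\epsilon\}$ and only afterwards proves the lower bound (\ref{formb1}) $|\tfrac12(U^{n+1}+U^{n-1})|>\tfrac12$ inductively, which lets it pass to $\epsilon=0$. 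Without an analogue of (\ref{formb1}) your right-hand side is not even globally Lipschitz on the ball where you want to contract.
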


Solvability of (\ref{forma1}) is shown by an inductional argument that is based on Brouwer's fixed point theorem: an auxiliary problem is introduced which excludes the case where $\frac{1}{2}(U^{n+1}+U^{n-1}) = 0$ when computing $\lambda^{n+1}$; for sufficiently small $k>0$, constructed solutions of the auxiliary problem are in fact solutions of (\ref{forma1}). The convergence follows from a compactness argument which is based on an energy identity while preserving the sphere constrain. The details of the proof will be omitted, cf. the related works in \cite{bbp1} and \cite{bcp1}.

\begin{proof}
{\em 1.~Auxiliary problem.} Fix $n \geq 1$. For every $0 < \epsilon \leq \frac{1}{4}$, define the continuous function ${\mathcal F}^{\omega}_{\epsilon}: {\mathbb R}^3 \rightarrow {\mathbb R}^3$ where
\begin{equation}\label{forma2}
{\mathcal F}^{\omega}_{\epsilon}(W) := \frac{2}{k}(W-U^n) + k \frac{(V^n, V^{n-1})}{\max\{ \vert W\vert^2, \epsilon\}} W - W \times (V^n - \frac{2}{k} U^n) \Delta W_{n+1}\, .
\end{equation}
We compute respectively,
\begin{eqnarray*}
\frac{2}{k} ( W-U^n,W) &\geq& \frac{2}{k} \bigl( \vert W\vert - \vert U^n\vert \bigr) \vert W\vert\,, \\
k \frac{(V^n, V^{n-1})}{\max\{ \vert W\vert^2,\epsilon\}} \vert W \vert^2 &\geq& - k \vert V^n\vert\, \vert V^{n-1}\vert\, .
\end{eqnarray*}
Since the stochastic term in (\ref{forma2}) vanishes after multiplication with $W$, there exists some
function $R_n >0$ such that
$$ \bigl( {\mathcal F}^{\omega}_{\epsilon}(W), W \bigr) \geq 0 \qquad \forall\, W \in
\bigl\{\varphi \in {\mathbb R}^3:\, \vert \varphi \vert \geq R_n(U^n, V^n, V^{n-1})\bigr\}\, .$$
Then, Brouwer's fixed point theorem implies the existence of $W^{\star} \equiv \frac{1}{2}(U^{n+1} + U^{n-1})$, such that
${\mathcal F}^{\omega}_{\epsilon}\bigl( \frac{1}{2}(U^{n+1} + U^{n-1})\bigr) = 0$ for all $\omega \in \Omega$.

The argument easily adopts to the case $n=0$.

{\em 2.~Solvability and energy identity.} We show that $\frac{1}{2}(U^{n+1}+U^{n-1})$ solves ${\mathcal F}^{\omega}_{0} \bigl(\frac{1}{2}(U^{n+1} +
U^{n-1})\bigr) = 0$. By induction, it suffices to verify that
\begin{eqnarray}\nonumber
\vert \frac{1}{2}(U^{n+1}+U^{n-1})\vert &=& \vert \frac{k}{2} V^{n+1} + \frac{1}{2}(U^{n} + U^{n-1})\vert
\geq \vert \frac{1}{2}(U^n + U^{n-1})\vert - \frac{k}{2} \vert V^{n+1}\vert \\ \nonumber
&\geq& \vert U^{n-1}\vert - \frac{k}{2} \bigl(\vert V^n\vert + \vert V^{n+1}\vert\bigr) \\ \label{formb1}
&\geq& 1-\frac{1}{4} - \frac{k}{2} \vert V^{n+1}\vert \stackrel{!}{>} \frac{1}{2}\, ,
\end{eqnarray}
for $k \leq k_0(U^0,V^0) < 1$ sufficiently small.

Let $n \geq 1$. For all $0 \leq \ell \leq n$, there holds $\vert U^{\ell}\vert = 1$, and
\begin{equation}\label{formb2}
E(V^{\ell}) = E(V^0) \, .
\end{equation}
Then $W^{\star} = \frac{1}{2}(U^{n+1} + U^{n-1})$ from Step 1.~solves
\begin{equation}\label{forma3}
k d_t V^{n+1} = \frac{\lambda^{n+1}_{\epsilon} k }{2} (U^{n+1} + U^{n-1}) + \frac{1}{4}(U^{n+1}+U^{n-1}) \times
(V^{n+1}+V^n) \Delta W_{n+1}\,,
\end{equation}
where
$$ \lambda^{n+1}_{\epsilon} = - \frac{(V^n,V^{n-1})}{\max\{\epsilon, \vert \frac{1}{2}(U^{n+1}+
U^{n-1})\vert^2 \}}\, .$$
Testing (\ref{forma3}) with $\frac{1}{2}(U^{n+1}-U^{n-1}) = \frac{k}{2}(V^{n+1}+V^{n})$, and using binomial
formula $\frac{1}{2}(U^{n+1}+U^{n-1}, U^{n+1}-U^{n-1}) = \frac{1}{2} \bigl(\vert U^{n+1}\vert^2-1\bigr)$,
as well as $\vert U^{n+1}\vert^2 \leq k^2 \vert V^{n+1}\vert^2 + \vert U^n\vert^2$, and the
inductive assumption $\vert U^n\vert^2 = 1$,
\begin{eqnarray}\nonumber
d_t  \vert V^{n+1}\vert^2 &\leq& \frac{\vert \lambda^{n+1}_{\epsilon}\vert}{4} k^2 \vert V^{n+1}\vert^2 \\
\label{einschu1}
&\leq& \frac{\vert V^n\vert\, \vert V^{n-1}\vert\, k^2 \vert V^{n+1}\vert^2}{4\max\{ \epsilon,
(1-\frac{1}{4}-k\vert V^{n+1}\vert)^2\}} \leq \frac{k^2}{4\epsilon}
\vert V^n\vert\, \vert V^{n-1}\vert\, \vert V^{n+1}\vert^2 \,,
\end{eqnarray}
for $\epsilon \leq \frac{1}{2}$. By a (repeated) use of the discrete version of Gronwall's inequality, there
exists a constant $C>0$ independent on time $T > 0$, such that
\begin{equation}\label{einschu2}
\vert V^{n+1}\vert^2 \leq C\, \vert V^0\vert^2\, .
\end{equation}
As a consequence, (\ref{formb1}) is valid, and hence
${\mathcal F}^{\omega}_{\epsilon}\bigl( \frac{1}{2}(U^{n+1} + U^{n-1})\bigr) = 0$ for $\epsilon = 0$;
therefore,
$U^{n+1}$ solves (\ref{forma1}), satisfies the sphere constraint, and conserves the
Hamiltonian, i.e., (\ref{formb2}) holds for $0 \leq \ell \leq n+1$.

For $n=0$, we argue correspondingly, starting in (\ref{einschu1}) with
$$d_t \vert V^1 \vert^2 \leq C \lambda_\epsilon^1 \vert U^{1} + U^{-1}\vert^{2} \leq
\frac{k \vert V^0 \vert (\vert V^0\vert + \vert V^{1}\vert )^2}{\max\{ \epsilon, \vert \frac{1}{2} (U^{1} + U^{-1})\vert\} }\, ,$$
from which we again infer (\ref{einschu2}), and (\ref{formb1}).

Uniqueness of $\bigl\{ (U^n, V^n)_n\bigr\}_n$ follows by an energy argument, using (\ref{formb1}),
(\ref{formb2}), $k \leq k_0$, and the discrete version of Gronwall's inequality.

{\em 3.~Convergence.} We rewrite (\ref{forma1}) in the form
\begin{eqnarray}\nonumber
d v &=&- \vert v\vert^2 u \, dt + u \times v \, \circ d W\,, \\ \label{formb3}
d u &=& v dt \,,\\ \nonumber
(u_0,v_0)&\in&T{\mathbb S}^2\, .
\end{eqnarray}
We now show the convergence of $({\mathscr U}^{\pm}_{k'}, {\mathscr V}_{k'}^{\pm})$ to the solution. It is because of the discrete sphere constraint and the (discrete) energy identity that sequences
$$ \bigl\{ ({\mathscr U}^{\pm}_k, {\mathscr V}^{\pm}_k)\bigr\}_k \subset C \bigl([0,T]; {\mathbb R}\bigr)$$
are uniformly bounded. Moreover, there holds for all $t \geq 0$
\begin{eqnarray}\nonumber
{\mathscr V}(t) &=& {\mathscr V}(0) + \int_0^{t^+} \frac{{\mathscr \lambda}^+}{2} [{\mathscr U}^+ + {\mathscr U}^- -
k {\mathscr V}^-]\, ds \\ \label{forma4a}
&& + \frac{1}{4} \int_0^{t^+} ({\mathscr U}^+ + {\mathscr U}^- - k {\mathscr V}^-) \times ({\mathscr V}^+ + {\mathscr V}^-)\, dW(s)\, , \\ \nonumber
{\mathscr U}(t) &=& {\mathscr U}(0) + \int_0^{t^+} {\mathscr V}(s)\, ds\, .
\end{eqnarray}
Then, by (\ref{forma4a})$_2$, (\ref{formb1}), and H\"older continuity property of $W$, sequences
$\bigl\{ ({\mathscr U}_k, {\mathscr V}_k)\bigr\}_{k}$ are equi-continuous. Hence, by Arzela-Ascoli theorem, there exist sub-sequences $\bigl\{ ({\mathscr U}_{k'}, {\mathscr V}_{k'})\bigr\}_{k'}$,
and continuous processes $({\mathscr U}, {\mathscr V})$ on $[0,T]$ such that
\begin{equation}\label{formb5}
\Vert {\mathscr U}_{k'} -u\Vert_{C([0,T]; {\mathbb R}^3)} +
                           \Vert {\mathscr V}_{k'} -v\Vert_{C([0,T]; {\mathbb R}^3)} \rightarrow 0
                           \qquad (k' \rightarrow 0) \qquad {\mathbb P}-\mbox{a.s.}
\end{equation}
We identify limits in (\ref{forma4a}). The only crucial term is the stochastic (It\^o) integral term which may be
stated in the form
\begin{equation}\label{bb1}
\frac{1}{2} \int_0^{t^+} ({\mathscr U}^+ + {\mathscr U}^- - k {\mathscr V}^-) \times
({\mathscr V}^- + \frac{k}{2} \dot{\mathscr V})\, dW(s)\, .
\end{equation}
We easily find for every $t \in [0,T]$,
\begin{equation*}
\frac{1}{2} \int_0^{t^+} ({\mathscr U}^+ + {\mathscr U}^- - k {\mathscr V}^-) \times
{\mathscr V}^-\, {\rm d}W(s) \rightarrow \int_0^t u\times v\, dW(s)
\qquad (k \rightarrow 0) \qquad {\mathbb P}-\mbox{a.s.}
\end{equation*}
The remaining term in (\ref{bb1}) involves $\frac{k}{2}\dot{\mathscr V}$, which will be substituted by
identity (\ref{forma1})$_1$,
\begin{eqnarray}\nonumber
&& \frac{1}{2} \int_0^{t^+} ({\mathscr U}^+ + {\mathscr U}^- - k {\mathscr V}^-) \times \Bigl(
{\mathscr V}^- + k \frac{\lambda^+}{4} ({\mathscr U}^+ + {\mathscr U}^- - k {\mathscr V}^-) \\
\label{forma4c}
&&\qquad
+ \frac{1}{4} ({\mathscr U}^+ + {\mathscr U}^- - k {\mathscr V}^-) \times
({\mathscr V}^- + \frac{k}{2} \dot{\mathscr V}) \Delta W_{n+1}
\Bigr)\, dW(s)\, .
\end{eqnarray}
If compared to (\ref{bb1}), the critical factor $\frac{k}{2} \dot{\mathscr V}$ is now scaled by an
additional $\Delta W_{n+1}$; using again (\ref{forma1})$_1$, It\^o's isometry, and the estimate
${\mathbb E} \vert \Delta W_{n+1}\vert^{2^p} \leq Ck^{2^{p-1}}$ then lead to
\begin{eqnarray}\nonumber
&&\frac{1}{8} \int_0^{t^+} ({\mathscr U}^+ + {\mathscr U}^-) \times \Bigl(
({\mathscr U}^+ + {\mathscr U}^- - k {\mathscr V}^-) \times ({\mathscr V}^-
+ \frac{k}{2} \dot{\mathscr V})
\Bigr) (W^+ - W^-)\, dW(s) \\ \label{klaa}
&&\qquad \rightarrow \frac{1}{2} \int_0^t u \times (u \times v)\, ds \qquad
(k \rightarrow 0) \qquad {\mathbb P}-\mbox{a.s.},
\end{eqnarray}
for all $t \in [0,T]$.
As a consequence, there holds
$$v(t) = v(0) + \int_0^t \vert v\vert^2 u\, ds + \int_0^t u \times v\, dW(s) +
\frac{1}{2} \int_0^t u \times (u \times v)\, ds\,,$$
where the last term is the Stratonovich correction.
The proof is complete.\end{proof}
\begin{remark}\label{am1}
1.~Let $\vert V^0\vert$ be constant, and $({\mathscr V}^{n+1}, {\mathscr U}^{n+1}) :=
({\mathbb E} V^{n+1}, {\mathbb E} U^{n+1})$. Then
\begin{equation}\label{c0}
\Bigl\vert {\mathscr V}^{n+1} - {\mathscr V}^n - k \bigl[ {\mathbb E}\vert V^{n}\vert^2
{\mathscr U}^{n+1}  - \frac{1}{2} {\mathscr V}^{n+1}\bigr] \Bigr\vert \leq C k^2\, ,
\end{equation}
i.e. the stochastic forcing term exerts a damping in direction ${\mathscr V}^{n+1}$. To show this result,
we start with
\begin{eqnarray}\nonumber
{\mathscr V}^{n+1} - {\mathscr V}^n &=& \frac{k}{2} {\mathbb E} \bigl[ \lambda^{n+1} (U^{n+1} + U^{n-1})\bigr]
+ \frac{1}{2} {\mathbb E} \bigl[ (U^{n+1} + U^{n-1}) \times V^{n+1/2} \Delta W_{n+1} \bigr] \\ \label{coa}
&=:& I + II\, .
\end{eqnarray}
We use Theorem~\ref{numscheme1}, and an approximation argument to conclude that
\begin{eqnarray*}
I &=& - \frac{k}{2} \, {\mathbb E} \bigl[ \langle V^n, V^{n-1}\rangle \Bigl( 1 - [1- \frac{1}{\vert \frac{1}{2}(U^{n+1}
+ U^{n-1})\vert^2}]\Bigr) (U^{n+1} + U^{n-1})\bigr] \\
&= & - \frac{k}{2} \, {\mathbb E} \bigl[ \langle V^n, V^{n-1}\rangle ({\mathscr U}^{n+1} + {\mathscr U}^{n-1})\bigr]
+ {\mathcal O}(k^3) \\
&=& - k\,  {\mathbb E}  \vert V^n\vert^2 {\mathscr U}^{n+1} + {\mathcal O}(k^2)\, ,
\end{eqnarray*}
thanks to the power property of expectations, and $\bigl\vert \vert \frac{1}{2}(U^{n+1} + U^{n-1})\vert^2 -1\bigr\vert \leq Ck^2$.

We use the identity $U^{n+1} = kV^{n+1} + U^n$ for the leading term in $II$, and properties of the vector product to conclude that
\begin{equation*}
II = \frac{k}{4} {\mathbb E} \bigl[ (V^{n+1}-V^n) \times V^{n}\Delta W_{n+1}\bigr] + \frac{1}{4} {\mathbb E} \bigl[ (U^{n} + U^{n-1}) \times
(V^{n+1} - V^n) \Delta W_{n+1}\bigr] =: II_a + II_b\, .
\end{equation*}
We easily verify $\vert II_a \vert \leq Ck^2$, thanks to (\ref{forma1})$_1$, and properties of iterates given
in Theorem~\ref{numscheme1}. For $II_b$, we use (\ref{forma1})$_1$ as well, and the relevant term is then
\begin{eqnarray*}
&& \frac{1}{16} {\mathbb E} \Bigl[ (U^n + U^{n-1}) \times \bigl( (U^{n+1} + U^{n-1}) \times (V^{n+1} + V^n)\bigr)\vert \Delta_{n+1}\vert^2\Bigr] \\
 &&\qquad = \frac{1}{2} {\mathbb E} \Bigl[ U^{n} \times \bigl( U^{n} \times V^{n}\bigr)\vert \Delta_{n+1}\vert^2\Bigr] + {\mathcal O}(k^2) \\
 &&\qquad = - \frac{k}{2} {\mathscr V}^{n} + {\mathcal O}(k^2)
 = - \frac{k}{2} {\mathscr V}^{n+1} + {\mathcal O}(k^2)\, ,
 \end{eqnarray*}
thanks to the power property of expectations, earlier boundedness results of iterates $\{ (U^n, V^n)\}_n$,
the fact that $\vert U^n\vert = 1$ for all $n \geq 0$, the cross product formula $a \times (b \times c) =
b \langle a, c\rangle - c \langle a, b\rangle$, and another approximation argument. This observation then
settles (\ref{c0}).

2.~Strong solutions of \eqref{seqito} satisfy $$\vert u(t)\vert = 1\,, \qquad E \bigl(v(t)\bigr) = E(v_0) \qquad \forall\, t \in [0,T]\,,$$
and are unique, due to Lipschitz continuity of coefficients in \eqref{seqito}; hence, the whole sequence
$\bigl\{ ({\mathscr U}_k, {\mathscr V}_k)\bigr\}_k$ converges to $(u,v)$, for $k \rightarrow 0$.

3.~Increments of a Wiener process may be approximated by a sequence of general, not necessarily Gaussian random variables, which properly approximate higher moments of $\Delta W_{n+1}$;
martingale solutions of \eqref{seqito} may then be obtained by a more involved argumentation using theorems of Prohorov and Skorokhod; cf.~\cite{bcp1}.
\end{remark}

\subsection{Numerical experiments}\label{nelubo}
In this section we present some numerical obtained by the above Algorithm that has been applied to a slightly more general problem than \eqref{seqito}
$$
d\dot u=-|\dot u|^2u\,dt+ \sqrt{D}(u\times\dot u)\circ dW,
$$
where $D$ is a fixed constant that controls the intensity of the noise term. The Lagrange multiplier was computed as
\begin{equation}\label{lagr2}
\lambda^{n+1} = \frac{-(V^n,U^{n+1} + U^{n-1})+\frac{1}{2k}(1-|U^{n-1}|^2)}{\left|\frac{1}{2}(U^{n+1} + U^{n-1})\right|^2}\, .
\end{equation}
The above formula is equivalent to the corresponding expression in (\ref{forma1}). However, the present formulation (\ref{lagr2}) is slightly more convenient for numerical computations, since it ensures that the round off errors in the constraint $|U^n|=1$ do not accumulate over time. The solution of the nonlinear scheme (\ref{forma1}) is obtained up to machine accuracy by a simple fixed-point algorithm, cf. \cite{bpsch}.

The probability density function $\hat{f}^n$ was constructed  with $N=20000$ sample paths. For all computations in this section we take the time step size $k=0.001$ and the initial conditions $u(0) = (0,1,0)$, $\dot u(0) = (1,0,0)$. The initial probability density function associated with the above initial conditions is a Dirac delta function concentrated around $u(0)$.

In Figure~\ref{fig:d1_dens} we display the computed probability density $\hat{f}^n$ for $D=1$, $T=60$ at different time levels. Initially the probability density function is advected in the direction of the initial velocity and is simultaneously being diffused. For early times, the diffusion seems to act predominantly in the direction perpendicular to the initial velocity. In Figure~\ref{fig:d1_dens} we display the time averaged probability density function $\overline{f}$, the trajectory $\mathbb{E}(u(t))$ and a zoom at $\mathbb{E}(u(t))$ near the center of the sphere.

\begin{figure}[htp]
\center
\includegraphics[width=0.23\textwidth]{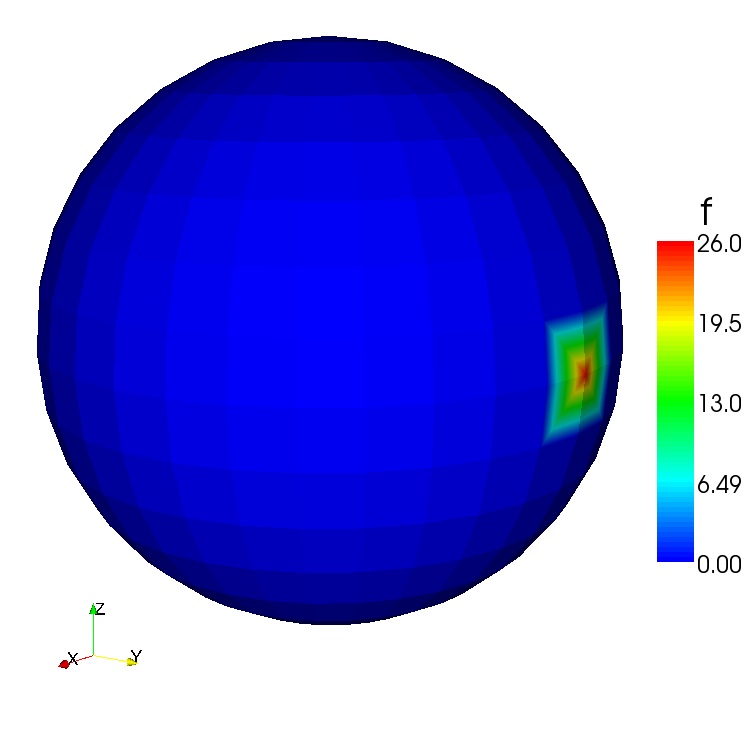}
\includegraphics[width=0.23\textwidth]{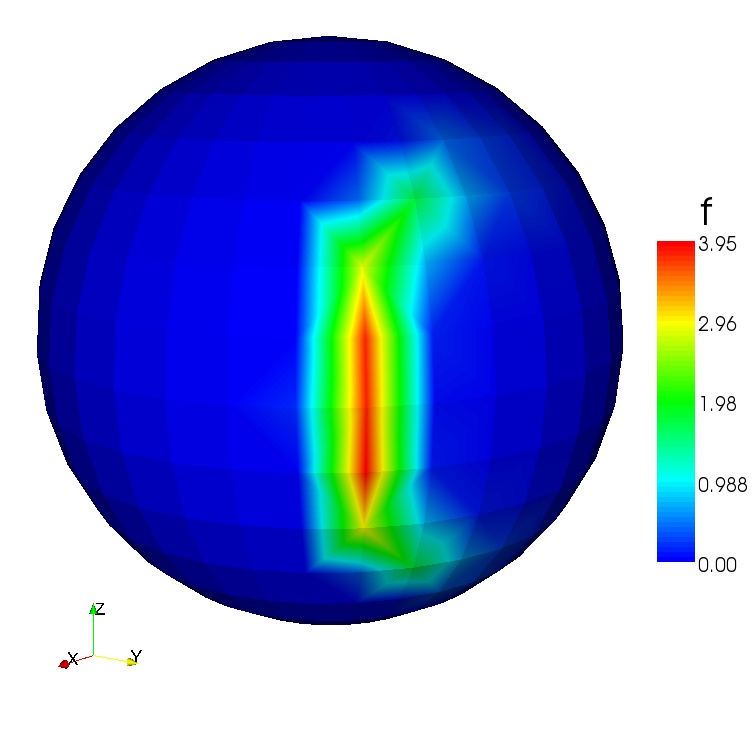}
\includegraphics[width=0.23\textwidth]{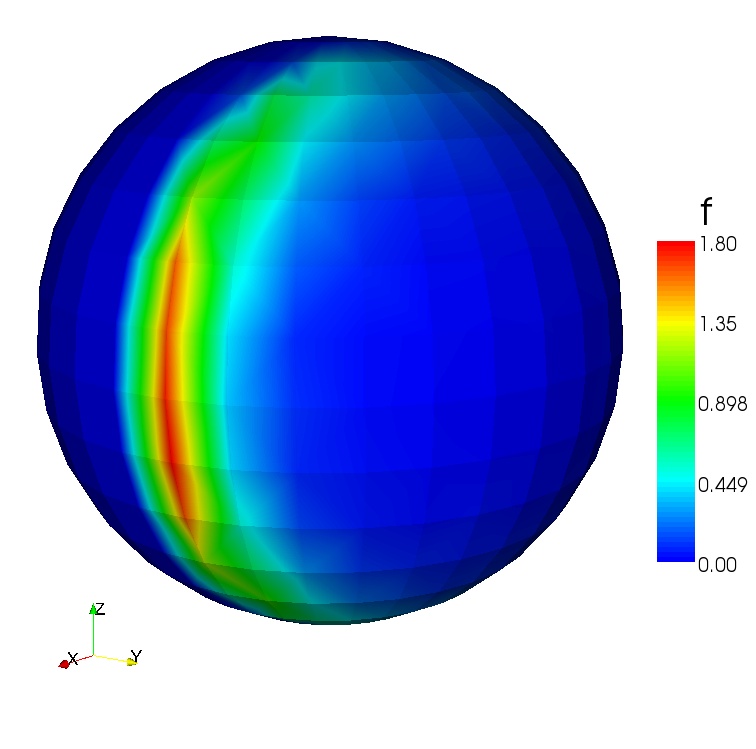}
\includegraphics[width=0.23\textwidth]{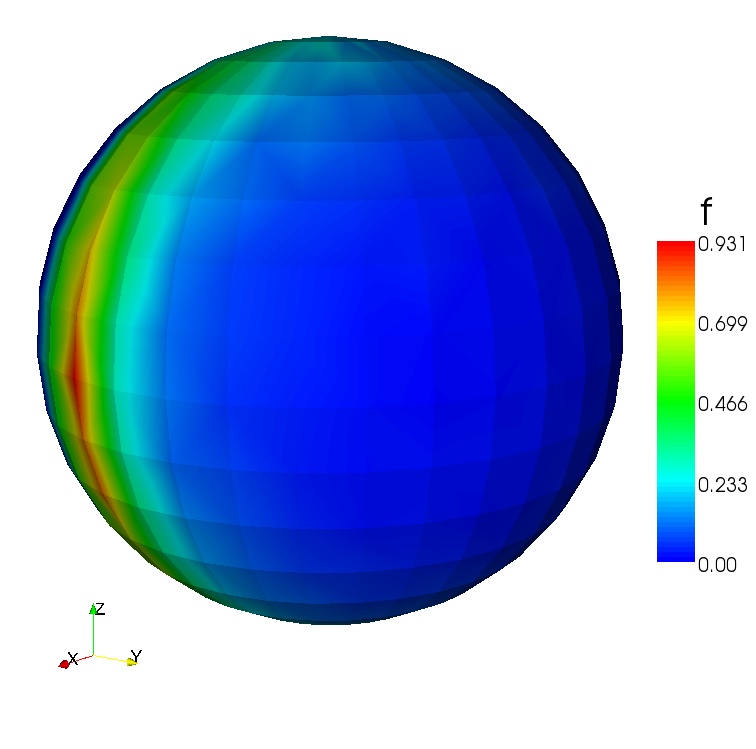}
\includegraphics[width=0.23\textwidth]{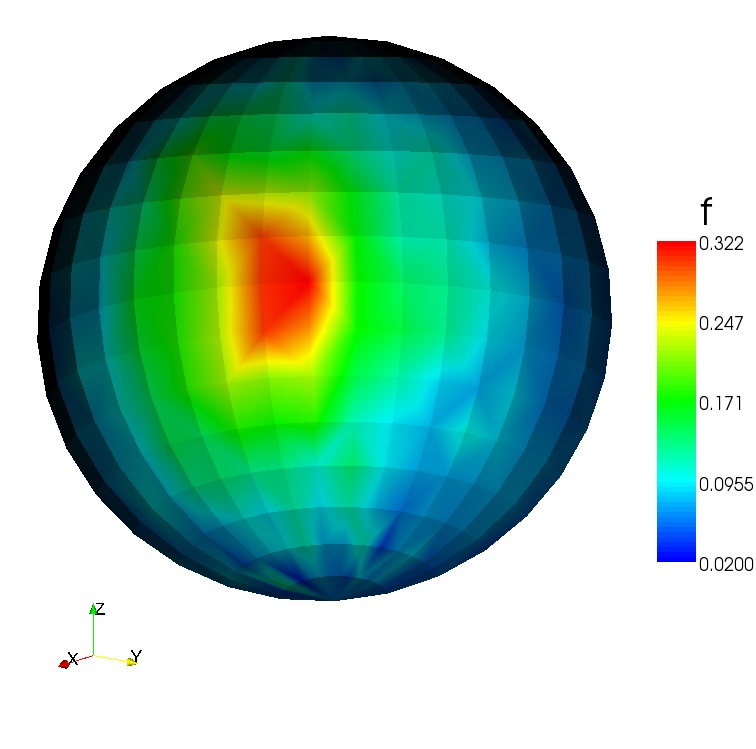}
\includegraphics[width=0.23\textwidth]{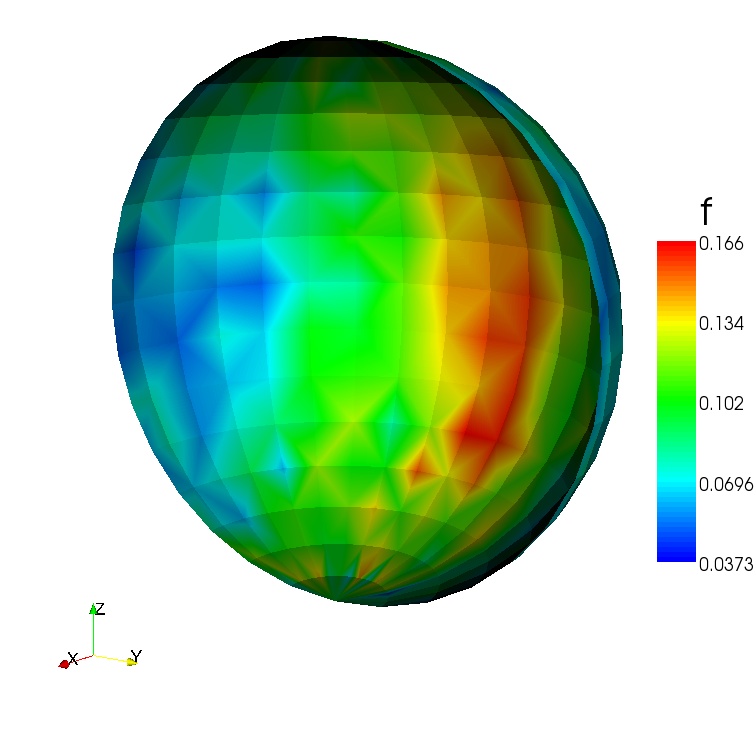}
\includegraphics[width=0.23\textwidth]{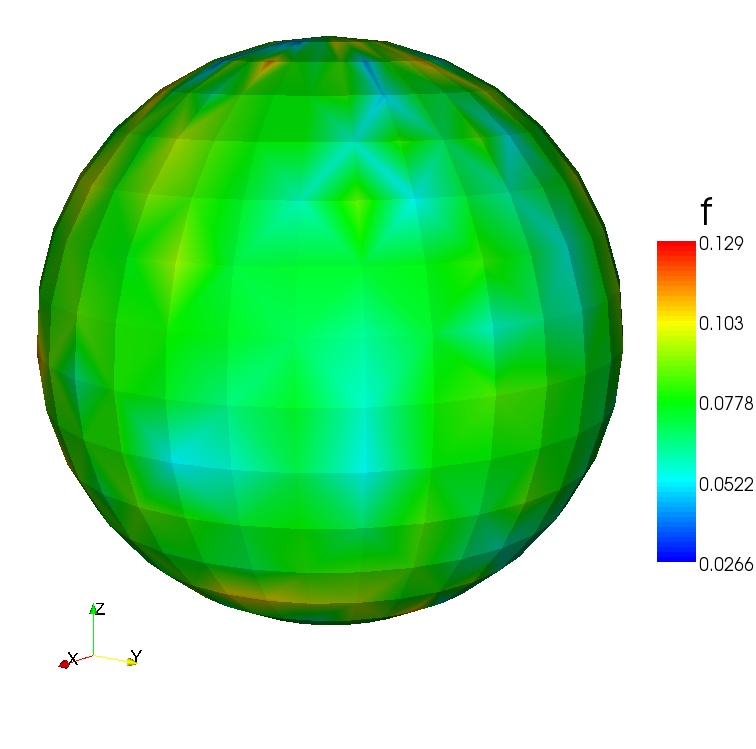}
\includegraphics[width=0.23\textwidth]{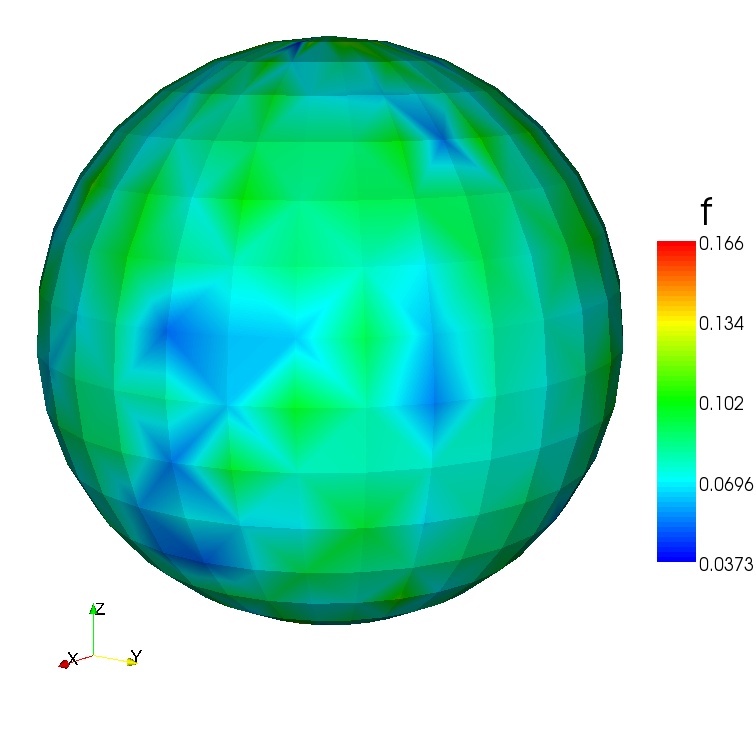}
\caption{Approximate probability density of $u$ for $D=1$ at $t=0,1,1.5,2.1,4.3,5.5,10,60$.}
\label{fig:d1_dens}
\end{figure}

\begin{figure}[htp]
\center
\includegraphics[width=0.3\textwidth]{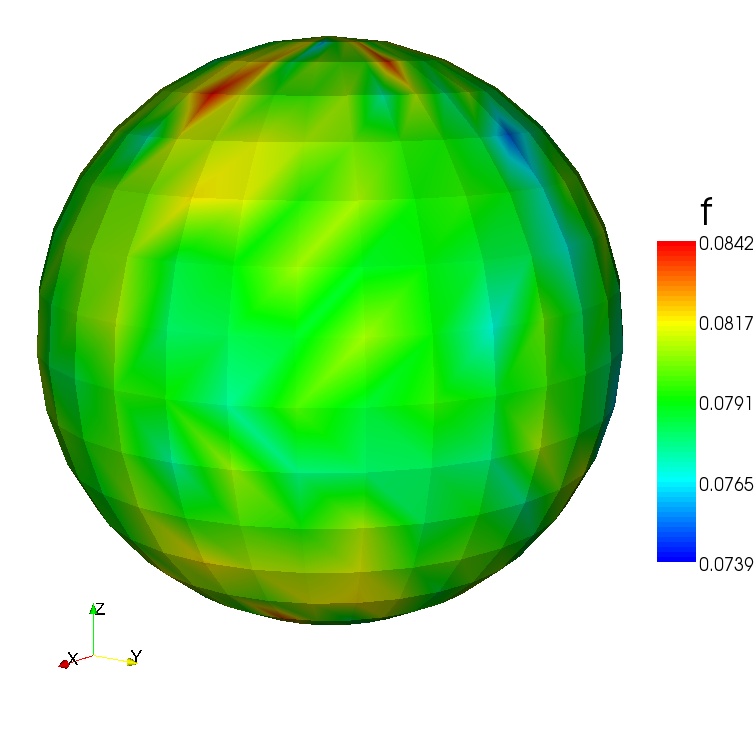}
\includegraphics[width=0.3\textwidth]{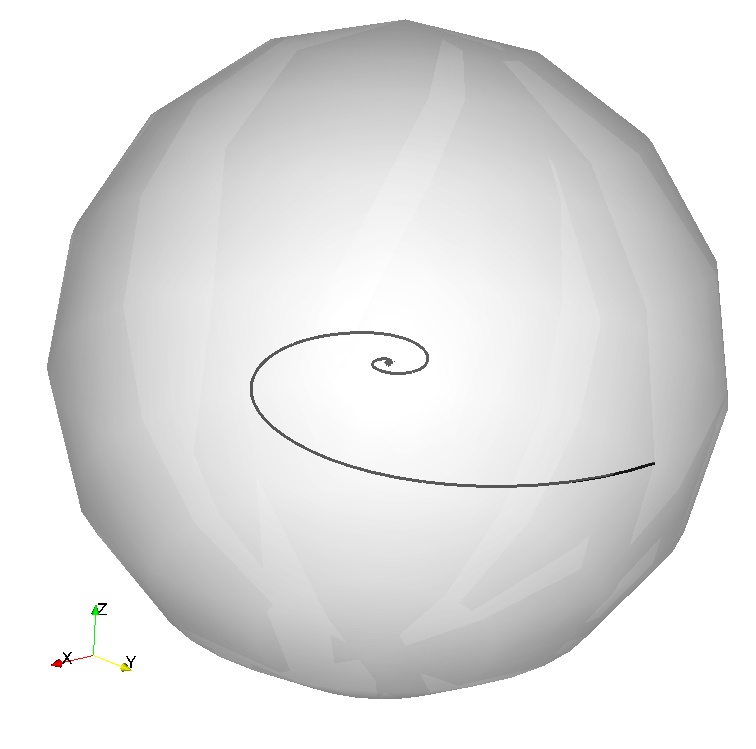}
\includegraphics[width=0.3\textwidth]{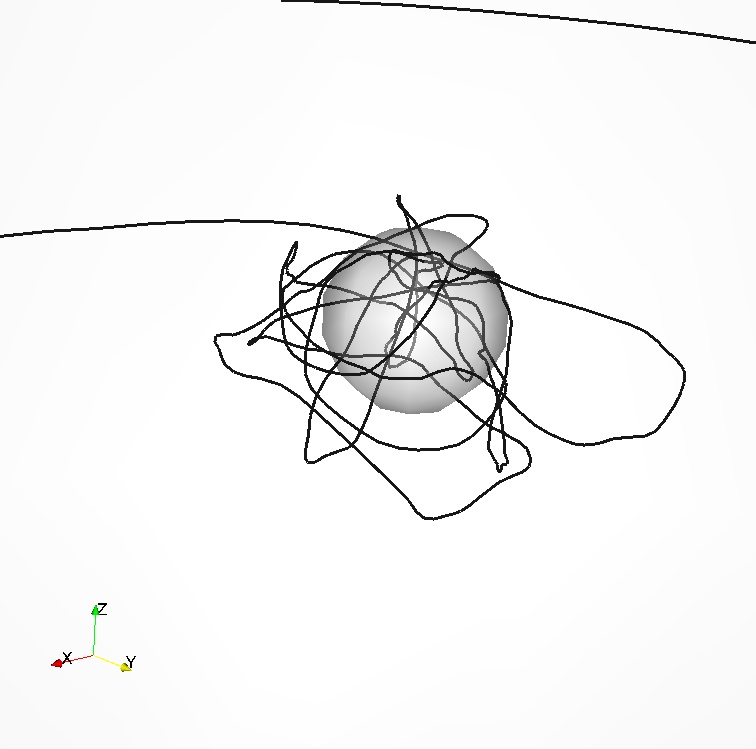}
\caption{Time averaged probability density of $u$ (left), $\mathbb{E}(u(\cdot))$ (middle) and a zoom at $\mathbb{E}(u(\cdot))$ with a sphere with radius $0.01$ (right), $D=1$.}
\label{fig:d1_aver}
\end{figure}

The evolution of the probability density for $D=10$, $T=60$ is shown in Figure~\ref{fig:d10_dens}. Similarly as in the previous experiment the probability density function diffuses and becomes uniform for large time. Some advection in the direction of the initial velocity can still be observed, however, the overall process has a predominantly diffuse character. We observe that the overall evolution damped due to the effects of the random forcing term, see Theorem \ref{numscheme1} and Figure~\ref{fig:conv}. In Figure~\ref{fig:d10_dens} we display the time averaged probability density function, the trajectory $\mathbb{E}(u(t))$ and a zoom at $\mathbb{E}(u(t))$ near the center. 

\begin{figure}[htp]
\center
\includegraphics[width=0.23\textwidth]{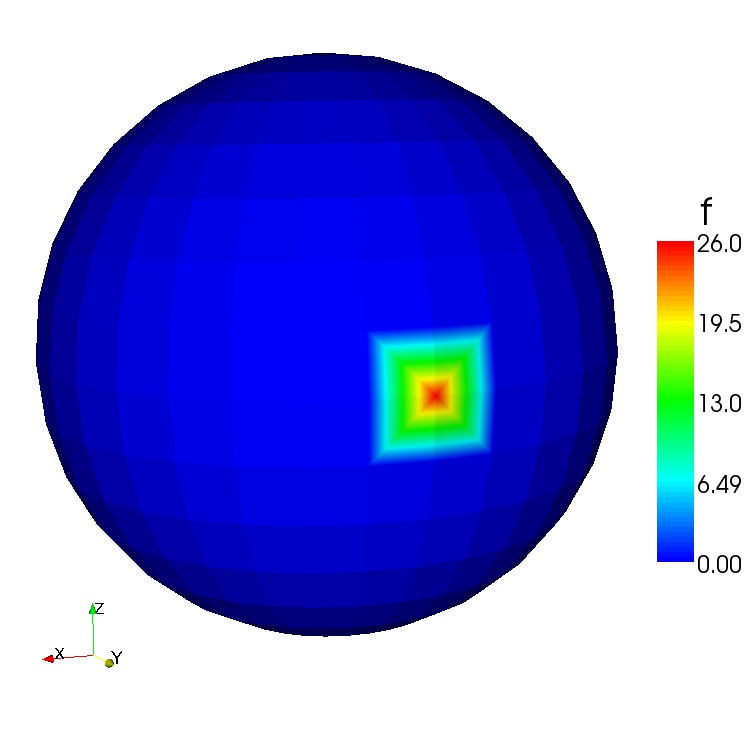}
\includegraphics[width=0.23\textwidth]{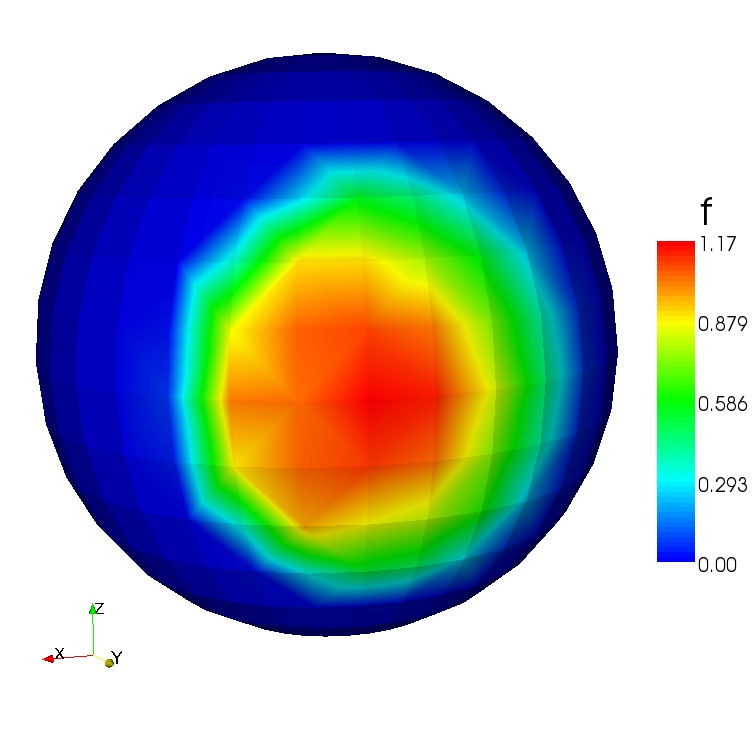}
\includegraphics[width=0.23\textwidth]{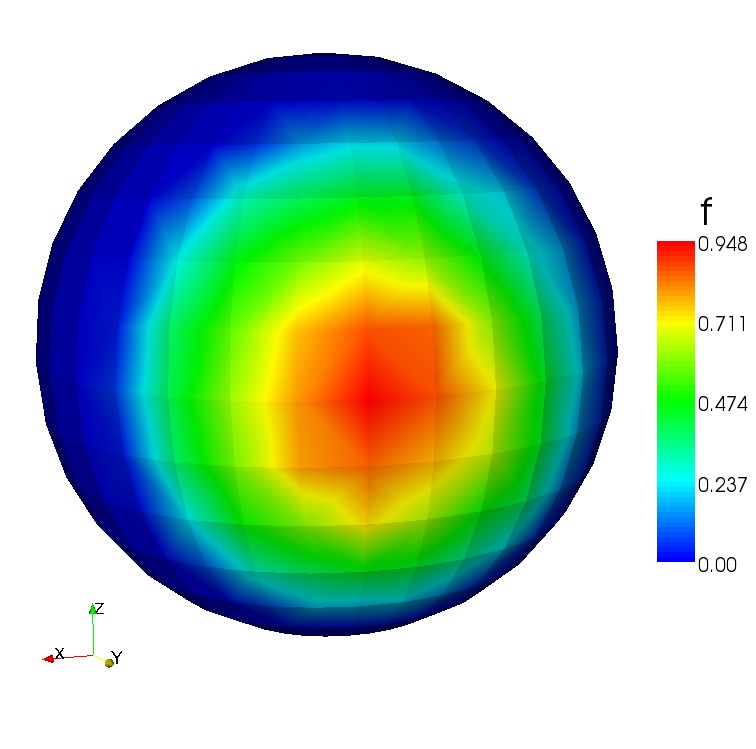}
\includegraphics[width=0.23\textwidth]{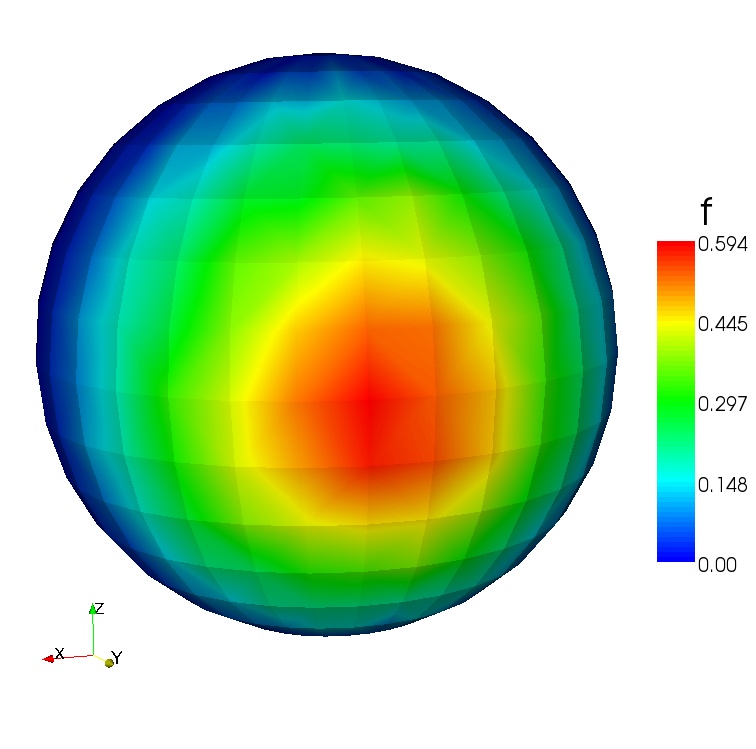}
\includegraphics[width=0.23\textwidth]{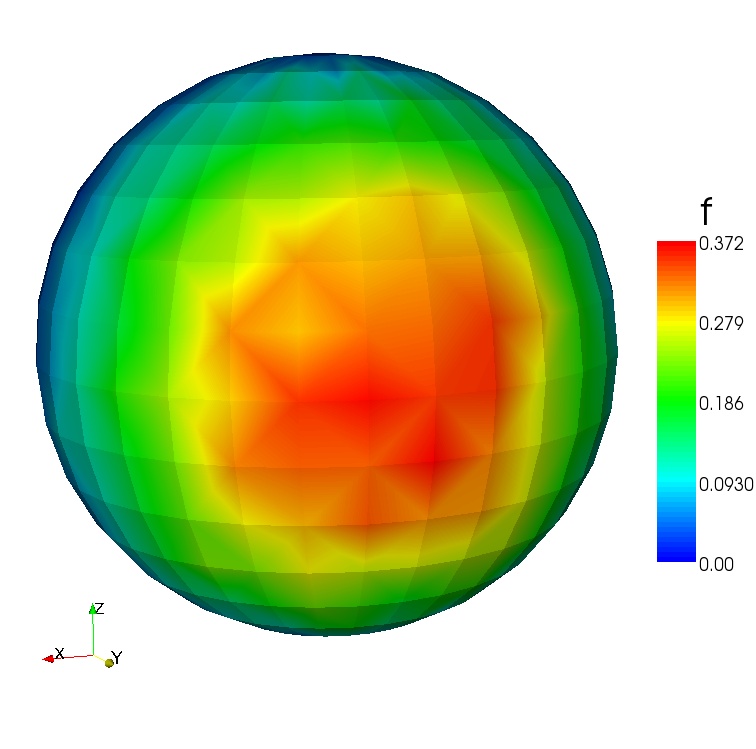}
\includegraphics[width=0.23\textwidth]{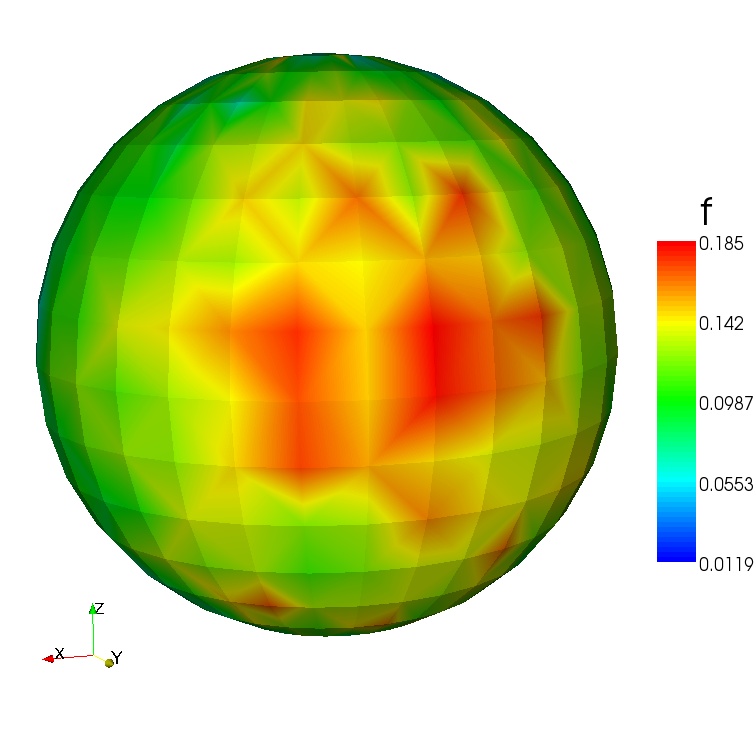}
\includegraphics[width=0.23\textwidth]{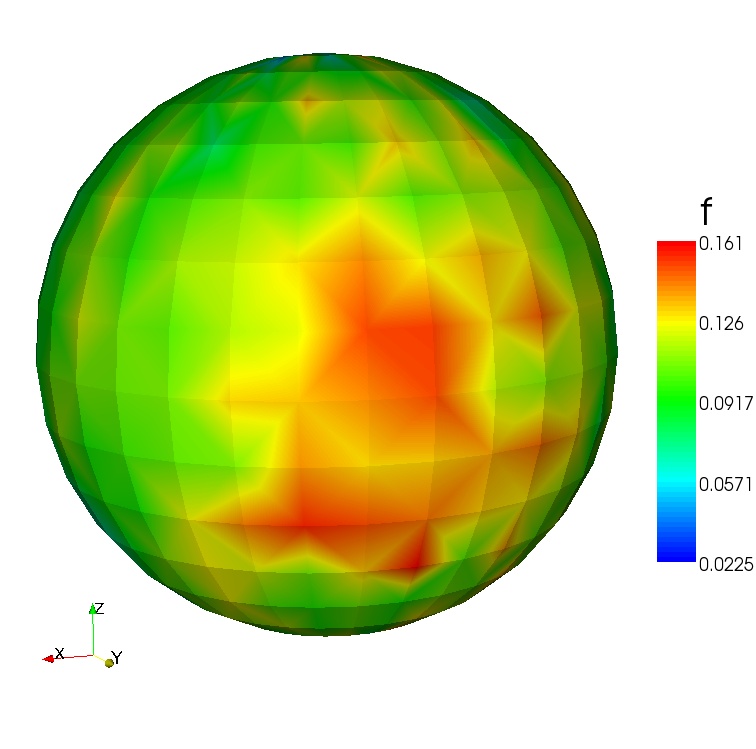}
\includegraphics[width=0.23\textwidth]{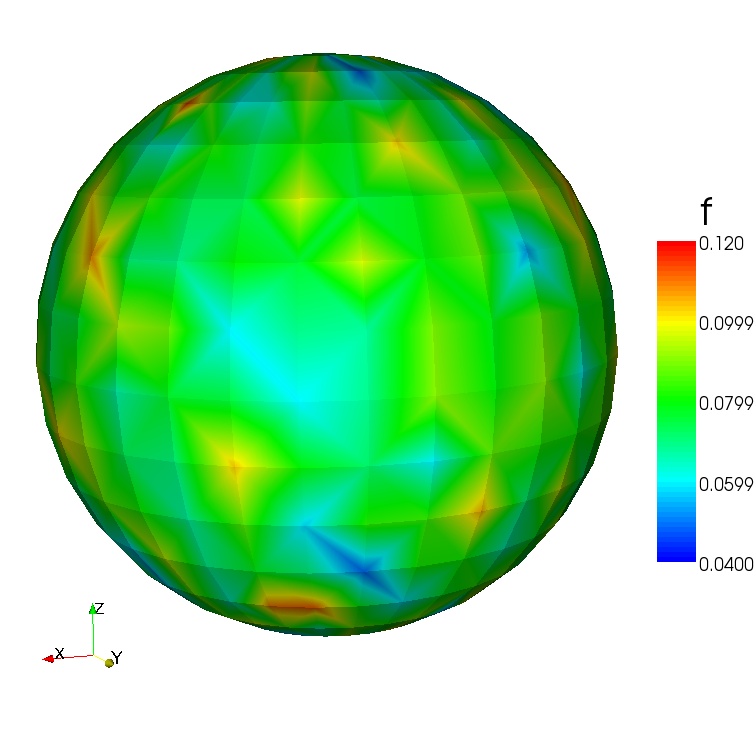}
\caption{Approximate probability density of $u$ for $D=10$ at $t=0,0.9,1.2,2,3.1,8,10,60$.}
\label{fig:d10_dens}
\end{figure}

\begin{figure}[htp]
\center
\includegraphics[width=0.3\textwidth]{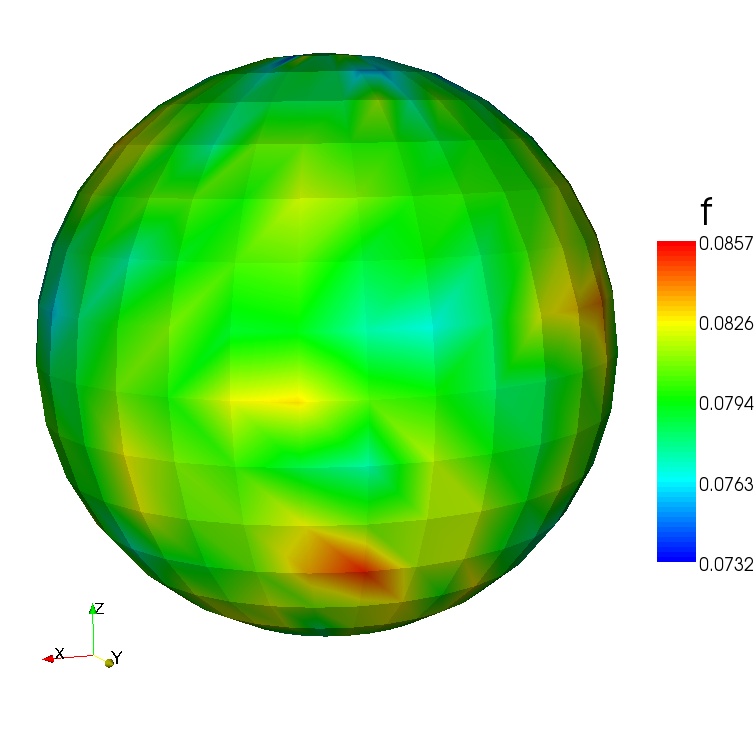}
\includegraphics[width=0.3\textwidth]{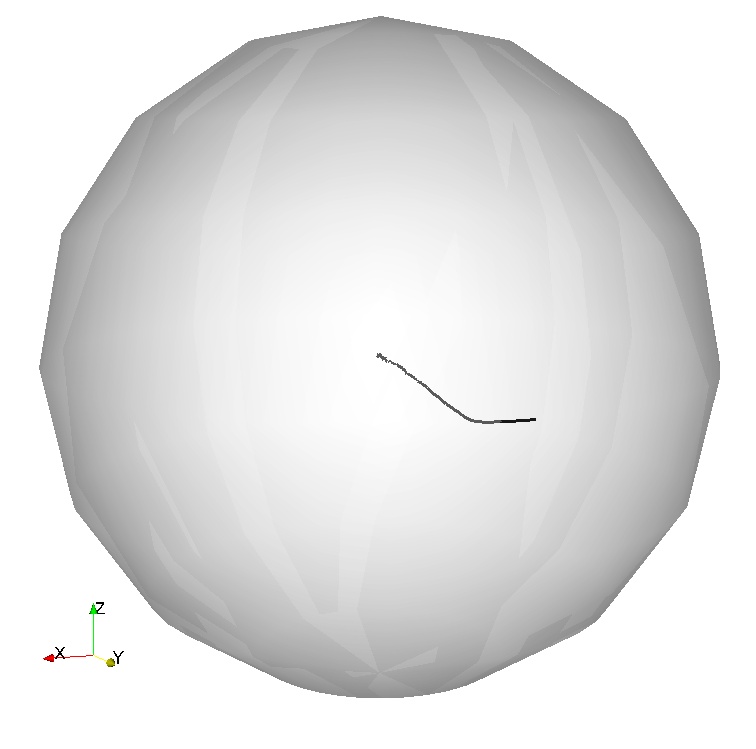}
\includegraphics[width=0.3\textwidth]{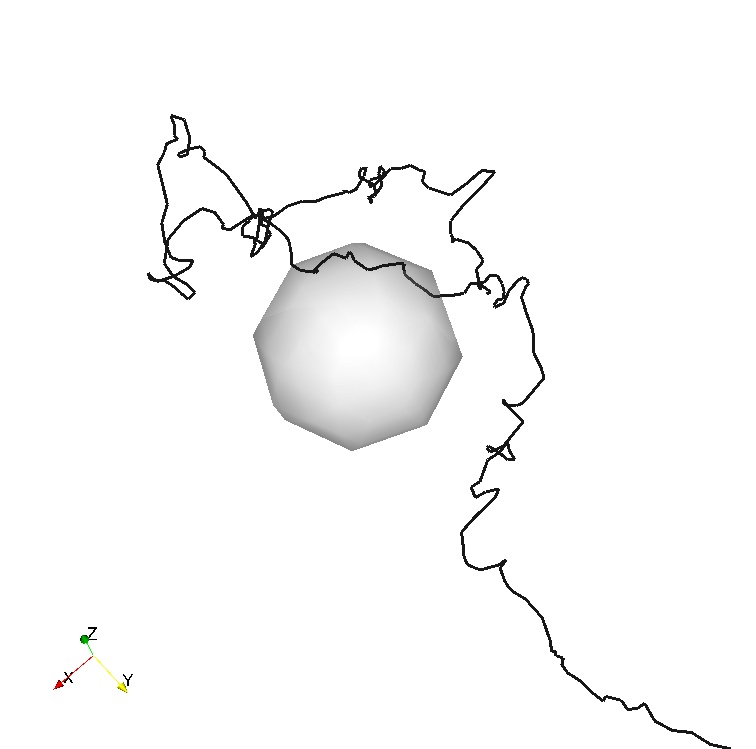}
\caption{Time averaged probability density of $u$ (left), $\mathbb{E}(u(\cdot))$ (middle) and a zoom at $\mathbb{E}(u(\cdot))$ with a sphere with radius $0.01$ (right), $D=10$.}
\label{fig:d10_aver}
\end{figure}

\begin{figure}[htp]
\center
\includegraphics[width=0.3\textwidth]{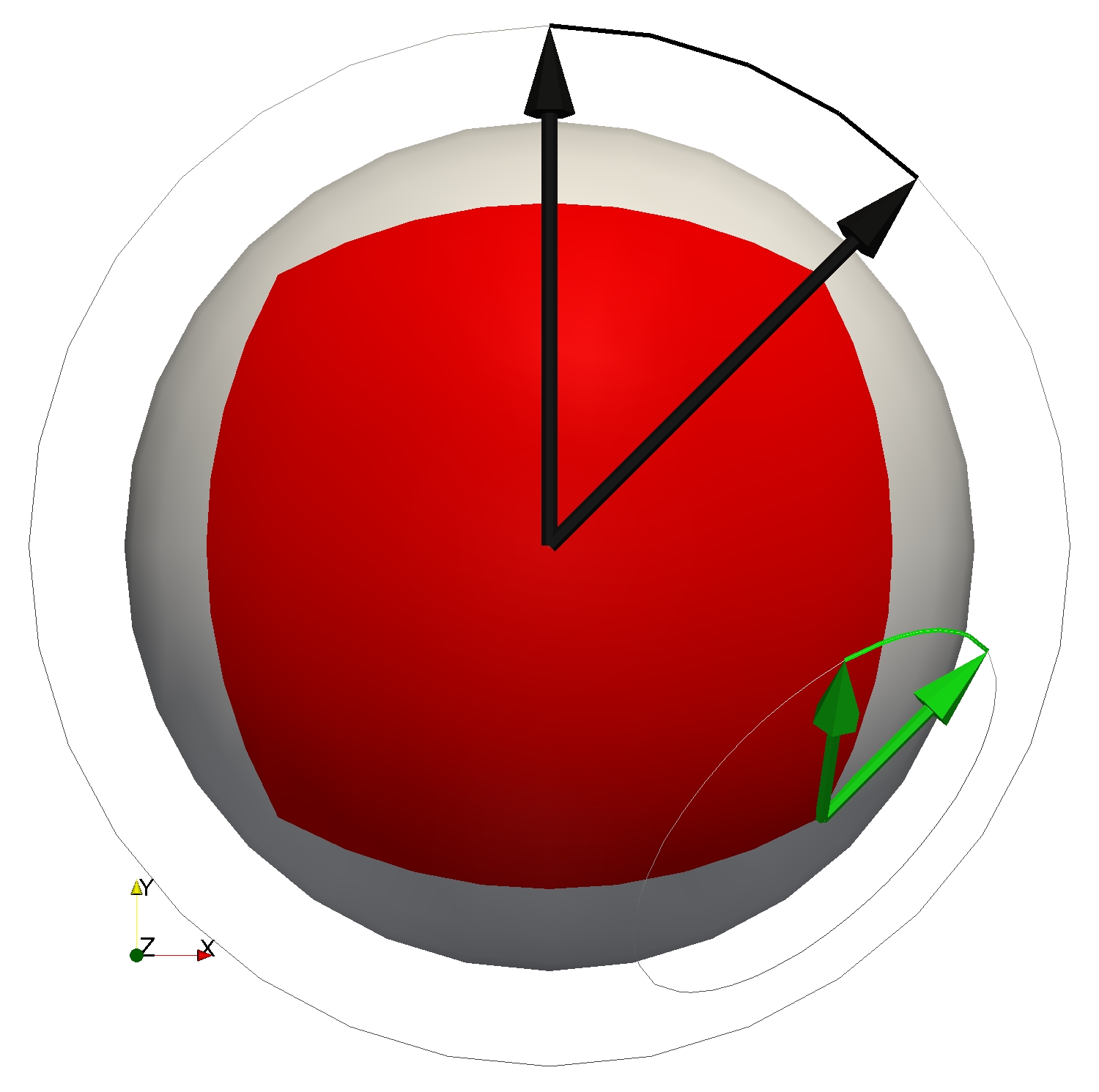}
\caption{The partition of the submanifold $M_1$ of $T\mathbb{S}^2$: $\omega^S_i$ in red, a segment $\gamma^j_i$ in black, the green arc indicates the elements of $M^j_i$ starting from a point in the down-right corner of $\omega^S_i$.}
\label{fig:part_bund}
\end{figure}

Figure~\ref{fig:d100_d0.01} contains the computed trajectories of $\mathbb{E}(u(t))$ for $D=0.1$ and $D=100$.  The respective probability densities asymptotically converge towards the uniform distribution for large time.

\begin{figure}[htp]
\center
\includegraphics[width=0.4\textwidth]{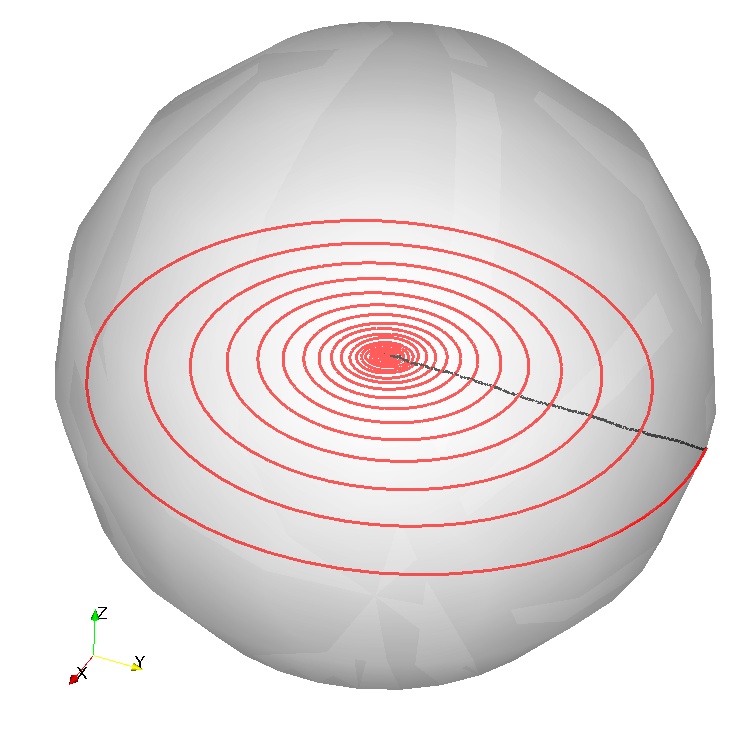}
\includegraphics[width=0.4\textwidth]{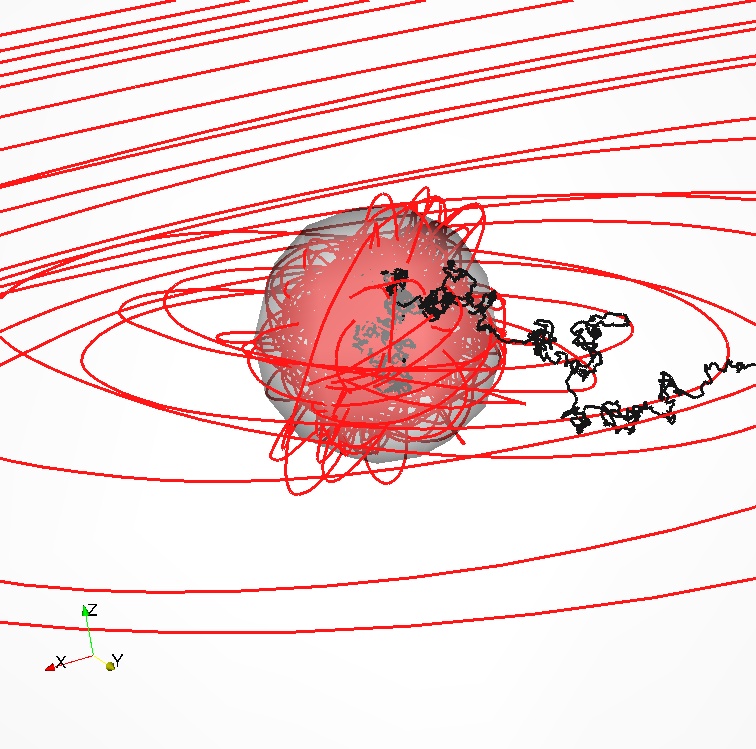}
\caption{$\mathbb{E}(u(\cdot))$ (left) and zoom near the center with s sphere with radius $0.01$ (right) for $D=100$ (black line), $D=0.1$ (red line).}
\label{fig:d100_d0.01}
\end{figure}

In Figure~\ref{fig:conv} we show the graphs of the time evolution of the approximate error $\mathcal{E}_{\max}^n: t^n \rightarrow\max_{\mathbf{x}\in S^2}|f^n(\mathbf{x}) - {f}^S|$ for $D=0.01, 0.1, 1, 10, 100$. The quantity $\mathcal{E}_{\max}^n$ serves as an approximation of the distance from the uniform probability distribution in the $L_{\infty}$ norm. Note that the oscillations in the error graphs are due to the approximation of the probability density. The numerical experiments provide evidence that the  probability densities for all $D$ converges towards the uniform probability density ${f}^{S}$ for $t\rightarrow\infty$. The probability density evolutions for decreasing values of $D$ have an increasingly ``advective'' character and the evolutions for increasing values have an increasingly ``diffusive'' character. It is also interesting to note, that the convergence towards the uniform distribution becomes slower for both increasing and decreasing values of $D$.

\begin{figure}[htp]
\center
\includegraphics[width=0.6\textwidth]{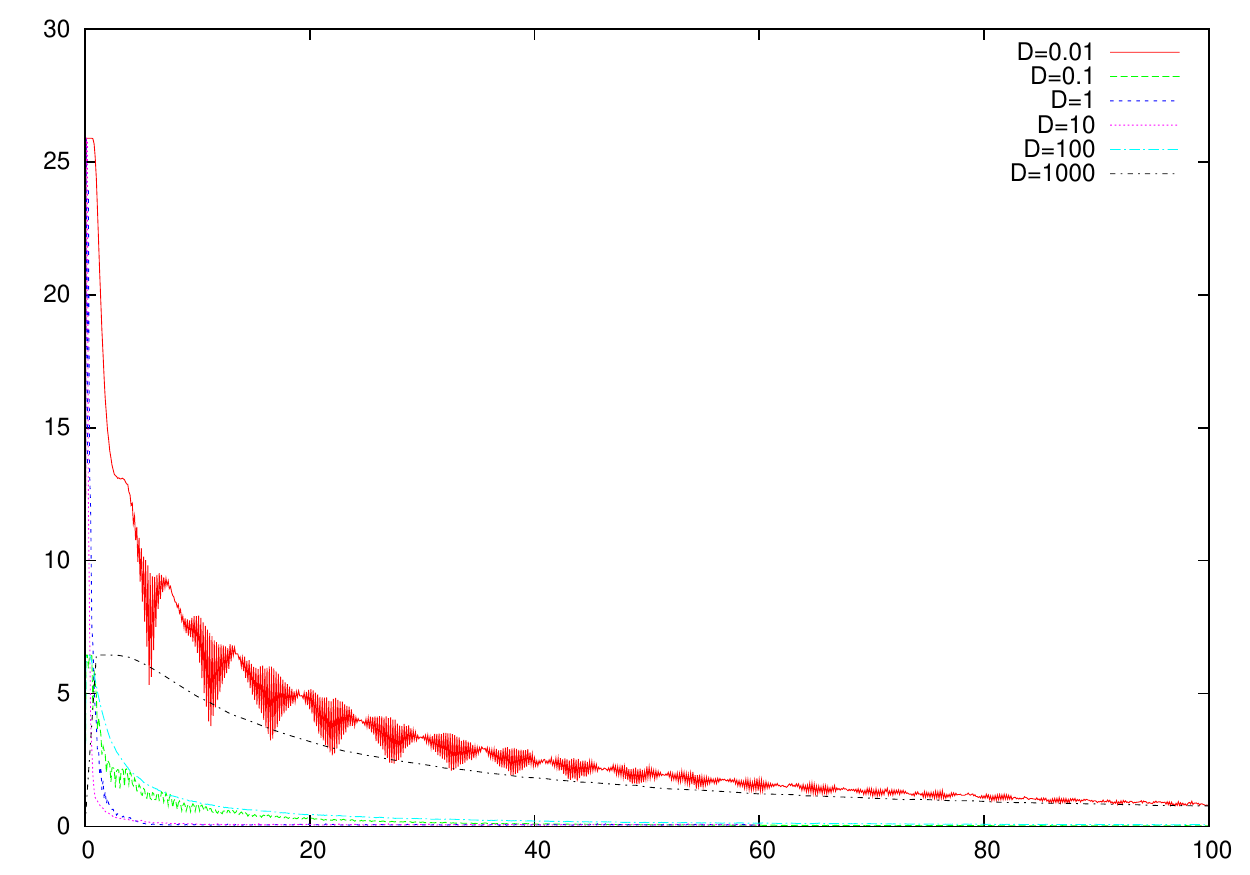}
\caption{Convergence of the probability distribution of $u$ towards a uniform distribution for different values of the coefficient $D$.}
\label{fig:conv}
\end{figure}

In the last experiment we study the long time behavior of the pair $(u,\dot{u})$ for $D=1$, $N=20000$. Towards this end, we introduce a partition of the manifold $M_1$ defined in \eqref{thesetmr}. First, we consider a partition of the unit sphere into segments $\omega^S_i$, $i=1,\dots,6$ associated with the points $x^S_i=(\pm1,0,0),(0,\pm1,0),(0,0,\pm1)$ in such a way that $x\in\mathbb{S}^2$ belongs to $\omega^S_i$ if and only if $|x-x^S_i|=\min_{1\le j\le 6}\,|x-x^S_j|$. Next, we denote by $T_i$ the tangent planes to points $x^S_i$. Fixing an $i\in\{1,\dots,6\}$, the orthogonal projections of vectors $\{x^S_1,\dots,x^S_6\}$ onto the tangent plane $T_i$ delimit $4$ sectors on $T_i$. We subsequently halve each sector obtaining thus $8$ equi-angular sectors $\gamma^1_i,\dots,\gamma^8_i$ in $T_i$. Now we introduce the following partition of $M_1$ into $6\times 8$ segments (see Figure~\ref{fig:part_bund}): a point $(p,\xi)\in T\mathbb{S}^2$ belongs to $M^j_i$ if $p\in\omega^S_i$ and the orthogonal projection of $\xi$ onto the tangent plane $T_i$ belongs to the sector $\gamma^j_i$. It can be verified by symmetries of this partition that the normalized surface volume of each $M^j_i$ is equal to $1/48$. For $n=60000$ (i.e., at time $t^n=60$) we have for $i=1,\dots,6$, $j=1,\dots,8$ $\#\{l|U^n_l \in \omega^S_i\}\in (3380,3260) \approx N/6 = 3333$ and $\#\{l|(U^n_l, {V^n_l})\in M^j_i\}\in (386,455) \approx N/6/8 = 417$, see Figure~\ref{fig:dens_ut} left and  Figure~\ref{fig:dens_ut} right, respectively. The numerical experiments indicate that the point-wise probability measure for $(u,\dot{u})$ converges to the invariant measure $\overline{\nu}$. The (rescaled) approximate $L_\infty$ error $\mathcal{E}_{\max}$ for $(u,\dot{u})$ has similar evolution as  the approximate  $L_\infty$ error for $u$. Moreover, it seems that the convergence of the error in time is exponential, see Figure~\ref{fig:linftybundle}. 

\begin{figure}[htp]
\center
\includegraphics[width=0.3\textwidth]{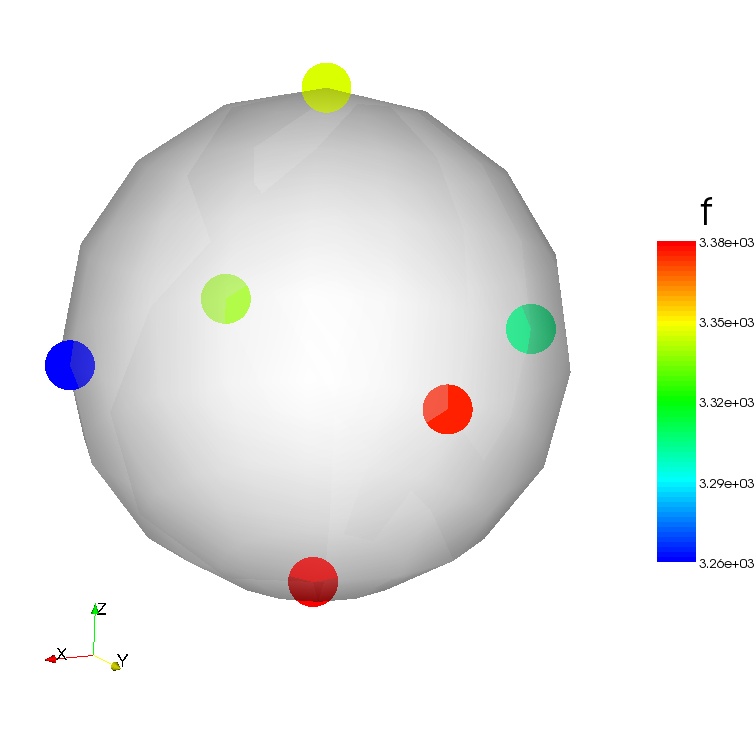}
\includegraphics[width=0.3\textwidth]{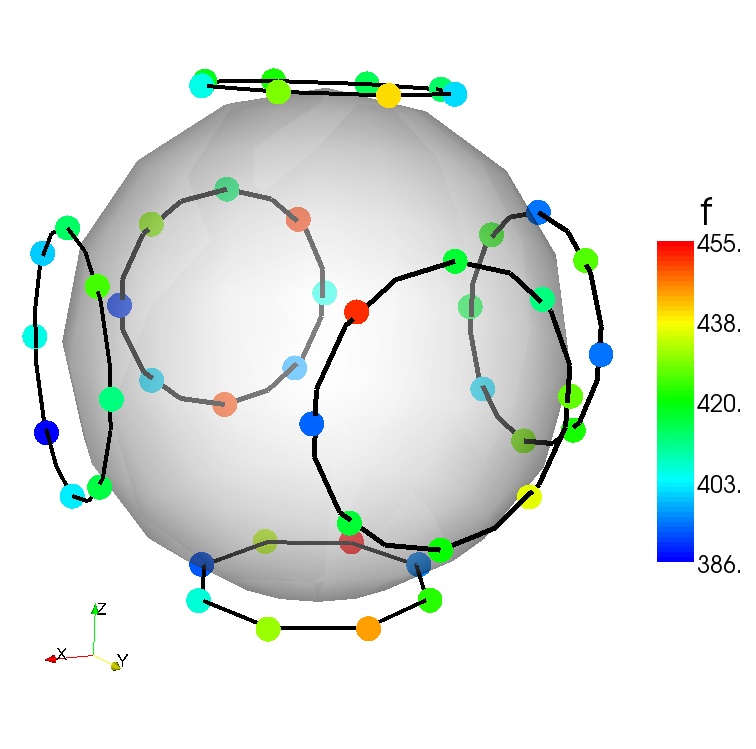}
\includegraphics[width=0.3\textwidth]{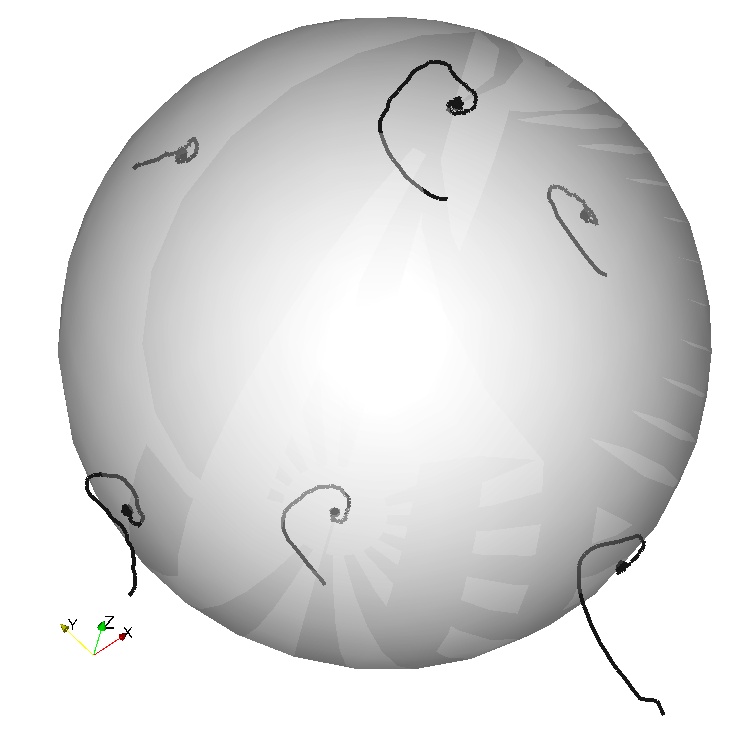}
\caption{Probability distribution of $(u,\dot{u})$ at $T=60$ (left and middle) and the evolution of $t\rightarrow \int_{\omega^S_i}\mathbb{E}(\dot{u}(t))$, $i=1,\dots,6$ (right).}
\label{fig:dens_ut}
\end{figure}

\begin{figure}[htp]
\center
\includegraphics[width=0.6\textwidth]{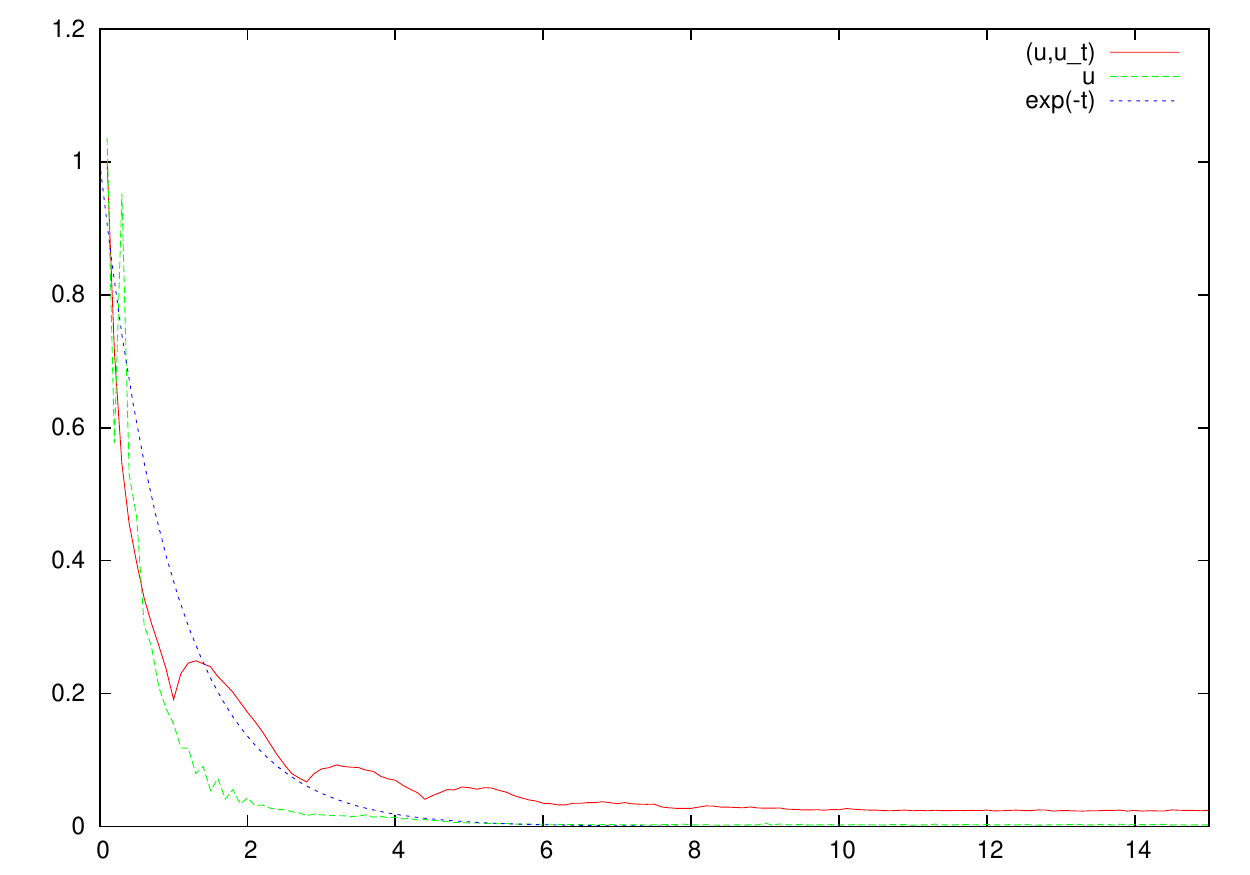}
\caption{Approximate $L_\infty$ convergence of the respective probability distributions for $(u,\dot{u})$ and $u$ towards a uniform distribution.}
\label{fig:linftybundle}
\end{figure}

\section{Invariant measures on $M_r$, $r>0$}

It is known that equations on manifolds with non-degenerate diffusions have a unique invariant probability law, that this invariant measure is absolutely continuous with respect to the surface measure and the density is $C^\infty$-smooth and strictly positive, see e.g. \cite{arnkli} or \cite[Proposition 4.5]{ikwa}. Unfortunately, the equation \eqref{gsde} on $M_r$ has a degenerate diffusion - there is just one vector field $g$ in the diffusion but $M_r$ is a $3$-dimensional manifold. In other words, there is not enough noise in the equation in order the above cited results on the nice ergodic behaviour could be applied in our case. We must therefore proceed in another way to confirm the conjectures of  Section \ref{numsoll}.

\begin{convention}\label{restrmr}
In the present section, we restrict the operators $(P_t)$ and $(P^*_t)$ to the invariant space $M_r$ where $r>0$ is fixed. More precisely, $P_t$ is understood as an endomorphism on $B_b(M_r)$ and $P^*_t$ is an endomorphism on the space of probability measures on $\mathscr B(M_r)$, cf. Theorem \ref{semigrprop}. Also $M_r$ is understood as a submanifold in $\Bbb R^6$.
\end{convention}

\begin{definition} We denote by $\lambda_r$ the normalized surface (Riemannian) measure on $M_r$.
\end{definition}

\subsection{Uniqueness} We are going to prove, using the geometric version of the H\"ormander theorem \ref{hormanthm} that $\lambda_r$ is the unique invariant measure on $M_r$. But let us first, before we proceed with the study of the qualitative properties of the adjoint Markov semigroup $(P^*_t)$, establish some furhter geometric properties of the drift and the diffusion vector fields $f$ and $g$ defined in \eqref{defvf}.

\begin{lemma}\label{fgprop} $M_r$ is a connected $3$-dimensional submanifold in $\Bbb R^6$ and the vector fields $f$ and $g$ on $M_r$ satisfy 
$$
[g,f]=\left(\begin{array}{c}u\times v\\0\end{array}\right),\qquad [f,[g,f]]=r^2g,\qquad [g,[g,f]]=-f,\qquad\operatorname{div}\,f=\operatorname{div}\,g=\operatorname{div}\,[g,f]=0
$$
where $[\cdot,\cdot]$ is the Jacobi bracket.
\end{lemma}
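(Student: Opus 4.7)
The plan is to verify each assertion in turn, using smooth extensions of $f$ and $g$ to $\Bbb R^6$ and computing Lie brackets by the standard ambient formula $[X,Y]=(DY)X-(DX)Y$.

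For the submanifold and connectedness claims, I would realise $M_r$ as the common zero set of the three functions $\phi_1(u,v)=|u|^2-1$, $\phi_2(u,v)=\langle u,v\rangle$, $\phi_3(u,v)=|v|^2-r^2$ on $\Bbb R^6$, whose gradients $(2u,0)$, $(v,u)$, $(0,2v)$ are pairwise orthogonal with nonzero norms on $M_r$ (here we use $r>0$). Hence, by the regular value theorem, $M_r$ is a $3$-dimensional embedded submanifold of $\Bbb R^6$. For connectedness I would use that the connected group $\mathrm{SO}(3)$ acts on $M_r$ by $Q\cdot(u,v):=(Qu,Qv)$; the action is transitive since any $(u,v)\in M_r$ can be sent to the base point $(e_1,re_2)$ by a suitable rotation, so $M_r$ is the continuous image of a connected space.

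For the three Jacobi bracket identities I would compute directly. The Jacobians of the natural extensions are
$$Df(u,v)(a,b)=\bigl(b,-|v|^2a-2\langle v,b\rangle u\bigr),\qquad Dg(u,v)(a,b)=\bigl(0,a\times v+u\times b\bigr),$$
and the claimed formula for $[g,f]$ drops out on $M_r$ because $\langle v,u\times v\rangle=0$ and $v\times v=u\times u=0$. Writing $h=[g,f]=(u\times v,0)$ and using $Dh(a,b)=(a\times v+u\times b,0)$, the remaining identities $[f,h]=r^2g$ and $[g,h]=-f$ follow by the triple product formula $u\times(u\times v)=\langle u,v\rangle u-|u|^2v$ combined with $|u|=1$, $u\perp v$, $|v|=r$.

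The subtle part is the divergence claim, since $\operatorname{div}$ here must be the Riemannian divergence on $M_r$, not the ambient one. The tool I would use is the standard identity
$$\operatorname{div}_{M_r}X=\operatorname{div}_{\Bbb R^6}\tilde X-\sum_{j=1}^{3}\langle (D\tilde X)n_j,n_j\rangle$$
valid for any smooth extension $\tilde X$ of a tangent vector field $X$, where $\{n_j\}$ is any orthonormal frame of the normal bundle. The three normals $n_1=(u,0)$, $n_2=(v,u)/\sqrt{1+r^2}$, $n_3=(0,v/r)$ are already orthonormal on $M_r$ by the defining relations. The ambient divergences $\operatorname{div}_{\Bbb R^6}f=-2\langle u,v\rangle$, $\operatorname{div}_{\Bbb R^6}g=0$, $\operatorname{div}_{\Bbb R^6}h=0$ all vanish on $M_r$, and each of the three normal-direction contributions also vanishes for $f$, $g$, $h$ after short computations using the same cross-product identities (for instance $\langle u\times v,v\rangle=0$ kills the $n_3$-contribution for $g$ and $h$). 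This bookkeeping step is where I expect the real work to lie; by contrast the bracket identities boil down to a single application of each cross-product identity.
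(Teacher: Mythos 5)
Your proposal is correct; all the identities check out (I verified the bracket and divergence computations with your conventions), but in two places you take a genuinely different route from the paper. For connectedness the paper joins two arbitrary points of $M_r$ by an explicit at-most-twice-broken curve (a rotation in the fibre circle $\{(p,\xi):\xi\perp p,\ |\xi|=r\}$ followed by the lift $\Gamma=(\gamma,\dot\gamma)$ of a constant-speed curve $\gamma$ on $\Bbb S^2$), whereas you exhibit $M_r$ as the orbit of $(e_1,re_2)$ under the transitive $\mathrm{SO}(3)$-action $Q\cdot(u,v)=(Qu,Qv)$; your argument is slicker and also reproves smoothness of $M_r$ via the regular value theorem, which the paper merely asserts. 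For the divergences the paper works entirely tangentially: it observes that $f$, $g$, $[g,f]$ are mutually orthogonal tangent fields of constant lengths $(r^2+r^4)^{1/2}$, $r$, $r$, normalizes them to an orthonormal frame $\{E_1,E_2,E_3\}$ of $M_r$, and computes $\operatorname{div}Y=\sum_{j}\langle d_{E_j}Y,E_j\rangle=0$ directly; you instead use the dual formula $\operatorname{div}_{M_r}X=\operatorname{div}_{\Bbb R^6}\tilde X-\sum_j\langle(D\tilde X)n_j,n_j\rangle$ with the orthonormal normal frame coming from the gradients of the constraints. Both are valid; the paper's choice exploits the fortunate fact that the three fields in question themselves form an orthogonal frame (a structural observation reused later), while yours requires no such coincidence and would generalize to other tangent fields on $M_r$. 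The bracket identities are handled the same way in both (direct computation with the ambient formula $[X,Y]=(DY)X-(DX)Y$, which the paper omits entirely); your only over-qualification is that $[g,f]=(u\times v,0)$ in fact holds on all of $\Bbb R^6$, not just on $M_r$, though the other two identities do use the constraints.
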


\begin{proof} Obviously, any $(p,\xi_1)$ and $(p,\xi_2)$ in $M_r$ can be connected by a rotation curve in the circle $\{(p,\xi):\xi\perp p,\,|\xi|=r\}$ and if $|p|=|q|=1$ and $\gamma:[a,b]\to\Bbb S^2$ is a curve connecting $p$ and $q$ with $|\dot\gamma|=r$ then $\Gamma=(\gamma,\dot\gamma)$ is a curve connecting $(p,\dot\gamma(a))$ and $(q,\dot\gamma(b))$ in $M_r$. Altogether, any two points in $M_r$ can be connected by an at most two times broken curve.

Observe that $f$, $g$ and $[g,f]$ are orthogonal tangent vector fields on $M_r$. If we define $E_1=f/(r^2+r^4)^\frac 12$, $E_2=g/r$, $E_3=[G,F]/r$ then $\{E_1,E_2,E_3\}$ is an othonormal frame on $M_r$ and
$$
\operatorname{div}Y=\sum_{j=1}^3\langle d_{E_j}Y,E_j\rangle_{\mathbb{R}^6}=0,\qquad Y\in\{f,g,[f,g]\}
$$
where $d_XY(p)=\lim_{t\to 0}t^{-1}[Y(p+tX)-Y(p)]$.
\end{proof}

\begin{definition} Let $S^1,\dots,S^m$ be vector fields on a manifold $M$. Denote by $\mathscr (S^1,\dots,S^m)$ the least algebra for the Jacobi bracket $[X,Y]=XY-YX$ that contains $\{S^1,\dots,S^m\}$ and denote
$$
\mathscr L(S^1,\dots,S^m)(p)=\{S_p:S\in\mathscr L(S^1,\dots,S^m)\}\subseteq T_pM,\qquad p\in M.
$$
\end{definition}

\begin{corollary}\label{horm} $\mathscr L(f,g)(z)=T_zM_r$ holds for every $z\in M_r$.
\end{corollary}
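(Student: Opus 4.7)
The plan is to read off the claim essentially directly from Lemma \ref{fgprop}. By definition, $\mathscr L(f,g)$ contains $f$, $g$ and $[g,f]$, so it suffices to show that the three vectors $f(z),g(z),[g,f](z)$ are linearly independent at every $z\in M_r$, since $T_zM_r$ has dimension $3$.

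First I would recall from Lemma \ref{fgprop} that $f$, $g$ and $[g,f]$ are mutually orthogonal tangent vector fields on $M_r$, with $[g,f](z)=\bigl(u\times v,0\bigr)$ for $z=(u,v)$. Hence linear independence at $z$ reduces to the (weaker) statement that none of these three vectors vanishes at $z$.

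Next I would verify the non-vanishing pointwise, using $|u|=1$, $|v|=r>0$ and $u\perp v$. A quick computation gives $|f(z)|^2=|v|^2+|v|^4|u|^2=r^2+r^4>0$, while $|g(z)|=|[g,f](z)|=|u\times v|=|u|\,|v|=r>0$ because $u$ and $v$ are orthogonal unit-scaled vectors in $\mathbb{R}^3$. Therefore $f(z),g(z),[g,f](z)$ are three nonzero, pairwise orthogonal vectors in the $3$-dimensional space $T_zM_r$, so they form a basis of $T_zM_r$, proving $\mathscr L(f,g)(z)=T_zM_r$.

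The only point where something could conceivably go wrong is the condition $r>0$, which is precisely what ensures $g$ and $[g,f]$ are nonzero on $M_r$; on $M_0$ the bracket generating condition would fail, which is consistent with Corollary \ref{cordelt} giving many invariant measures there. There is no real obstacle to carrying out this argument — it is a one-line verification once Lemma \ref{fgprop} is in hand.
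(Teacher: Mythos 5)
Your proof is correct and is essentially the argument the paper intends: the corollary is stated without a separate proof precisely because Lemma \ref{fgprop} (and the orthonormal frame $E_1=f/(r^2+r^4)^{1/2}$, $E_2=g/r$, $E_3=[g,f]/r$ constructed in its proof) already exhibits $f$, $g$, $[g,f]$ as three pairwise orthogonal nonvanishing tangent fields on the $3$-dimensional manifold $M_r$. Your norm computations $|f|^2=r^2+r^4$ and $|g|=|[g,f]|=|u\times v|=r$ match the paper's normalizations exactly, so nothing further is needed.
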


The following result is known\footnote{See e.g. (4.58) on p. 292 in \cite{ikwa}.} but we can give its straight analytic proof in few lines now.

\begin{proposition}\label{prop-charinv} A  probability measure $\nu$ on $\mathscr B(M_r)$ is invariant if and only if
\begin{equation}\label{baseqinvm}
\int_{M_r}\mathcal Ah\,d\nu=0\quad\textrm{for every}\quad h\in C^2(M_r)
\end{equation}
where the operator $\mathcal A$ was defined in \eqref{defopera}.
\end{proposition}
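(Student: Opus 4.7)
The plan is to apply Theorem \ref{semigrprop}, which realizes $(P_t)$ as a $C_0$-semigroup on $C(M_r)$ whose infinitesimal generator $A$ agrees with $\mathcal A$ on $C^2(M_r)\subseteq D(A)$. The statement is then a specialization of the standard principle ``an invariant measure is exactly one that annihilates the generator on a core''. I would simply carry out the two implications separately, using Proposition \ref{kolm3} as the bridge.

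For the forward direction I would assume $\nu$ is invariant, fix $h\in C^2(M_r)$, and start from $0 = t^{-1}\int_{M_r}(P_t h - h)\, d\nu$ for every $t>0$, which is just a rewriting of $P^*_t\nu=\nu$. Since $h\in D(A)$ and $Ah=\mathcal Ah$ by Theorem \ref{semigrprop}, the difference quotient $(P_th-h)/t$ converges to $\mathcal Ah$ in the sup norm on $M_r$. Because $\nu$ is a probability measure and $M_r$ is compact, the limit passes inside the integral and \eqref{baseqinvm} follows.

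For the converse I would fix $h\in C^2(M_r)$ and define $\phi(t) := \int_{M_r} P_t h \, d\nu$. Theorem \ref{semigrprop} guarantees $P_t h \in C^2(M_r)$ and Proposition \ref{kolm3} gives $\partial_t P_t h = \mathcal A P_t h$ with $\partial_t P_t h$ continuous on $[0,\infty)\times M_r$. The uniform-on-$M_r$ convergence of the difference quotient (again from the $C_0$-property) lets me differentiate under the integral, producing $\phi'(t) = \int_{M_r} \mathcal A(P_t h)\, d\nu$, which vanishes by the hypothesis applied to the $C^2$-function $P_th$. Hence $\phi$ is constant, so $\int_{M_r} P_t h \, d\nu = \int_{M_r} h \, d\nu$ for all $t\geq 0$ and $h\in C^2(M_r)$. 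Density of $C^2(M_r)$ in $C(M_r)$ (already exploited in the proof of Corollary \ref{rotations}) together with the boundedness of $P_t$ extends the equality to $h\in C(M_r)$, and since $M_r$ is compact metric this forces $P^*_t\nu=\nu$ on $\mathscr B(M_r)$.

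I do not expect any serious obstacle. The single place that wants care is the interchange of $\partial_t$ with $\int_{M_r}\cdot\,d\nu$ in the reverse direction, and that is immediate from the uniform-on-$M_r$ convergence of the semigroup difference quotient combined with the finiteness of $\nu$; the passage from $C(M_r)$ to $\mathscr B(M_r)$ is the routine fact that a Borel probability measure on a compact metric space is determined by its values on continuous functions. All of the substantive analytic content has already been packaged into Theorem \ref{semigrprop} and Proposition \ref{kolm3}, so the present proposition is essentially a bookkeeping corollary of those two results.
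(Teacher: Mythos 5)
Your proposal is correct and takes essentially the same route as the paper's (one-line) proof: both rely on the $C_0$-semigroup property, the identification $A=\mathcal A$ on $C^2(M_r)$, the invariance of $C^2(M_r)$ under $(P_t)$ (so that $\partial_t P_th=\mathcal A P_th$ applies and the hypothesis can be fed back in), and density of $C^2(M_r)$. You have merely spelled out the two implications that the paper compresses into a single sentence.
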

\begin{proof} This is an immediate consequence of the $C_0$-semigroup property of $(P_t)$ on $C(M_r)$, the invariance of $C^2(M_r)$ under $(P_t)$, the fact that  $P_t\circ\mathcal A=\mathcal A\circ P_t$ on $C^2(M_r)$ for every $t\ge 0$ and density of $C^2(M_r)$ in $B_b(M_r)$ as all proved in Theorem \ref{semigrprop}.
\end{proof}

\begin{proposition}\label{prop-thdiv} Let $R\in C^2(M_r)$. Then the measure $d\nu=R\,d\lambda_r$ satisfies \eqref{baseqinvm} iff $R$ is constant on $M_r$.\end{proposition}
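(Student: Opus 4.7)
The plan is to exploit the divergence-free property of $f$ and $g$ established in Lemma \ref{fgprop}, together with the H\"ormander-type spanning condition from Corollary \ref{horm}.

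The forward implication is immediate: if $R\equiv c$ is constant, then since $M_r$ is a compact boundaryless Riemannian manifold and $f,g$ are divergence-free, the divergence theorem yields $\int_{M_r} X(\psi)\, d\lambda_r = 0$ for every $\psi \in C^1(M_r)$ and $X \in \{f,g\}$. Taking $\psi = h$ and $\psi = g(h)$ successively gives $\int_{M_r} \mathcal A h\, d\lambda_r = 0$ for every $h \in C^2(M_r)$, whence \eqref{baseqinvm} follows after multiplying by $c$.

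For the converse I would first convert \eqref{baseqinvm} into a pointwise PDE for $R$. The vanishing divergences together with the Leibniz rule produce the skew-adjointness relation
\begin{equation*}
\int_{M_r} \varphi\, X(\psi)\, d\lambda_r \;=\; -\int_{M_r} \psi\, X(\varphi)\, d\lambda_r, \qquad X \in \{f,g\},\ \varphi,\psi \in C^1(M_r).
\end{equation*}
Applying this once to the drift and twice to the diffusion in $\int_{M_r} R\,\mathcal A h\, d\lambda_r$ rewrites \eqref{baseqinvm} as
\begin{equation*}
\int_{M_r} h\cdot \mathcal A^\ast R\, d\lambda_r \;=\; 0, \qquad \mathcal A^\ast R \;:=\; -f(R) + \tfrac12 g(g(R)),
\end{equation*}
for every $h \in C^2(M_r)$. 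Since $\mathcal A^\ast R$ is continuous (because $R\in C^2$) and $C^2(M_r)$ is dense in $C(M_r)$, I conclude that $\mathcal A^\ast R \equiv 0$ on $M_r$.

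The crucial step is now an energy identity: test $\mathcal A^\ast R = 0$ against $R$ itself. The drift term vanishes since $\int_{M_r} R\, f(R)\, d\lambda_r = \tfrac12 \int_{M_r} f(R^2)\, d\lambda_r = 0$, while one application of the skew-adjointness of $g$ rewrites the diffusion contribution as $-\tfrac12 \int_{M_r} |g(R)|^2\, d\lambda_r$. Hence $g(R)\equiv 0$, and plugging this back into $\mathcal A^\ast R = 0$ forces $f(R)\equiv 0$ as well. Since the derivation identity $[X,Y](R) = X(Y(R)) - Y(X(R))$ propagates vanishing of $X(R)$ through Jacobi brackets, every element of $\mathscr L(f,g)$ annihilates $R$; by Corollary \ref{horm} this means $dR \equiv 0$ on $M_r$, and the connectivity of $M_r$ established in Lemma \ref{fgprop} then forces $R$ to be globally constant. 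The only mildly technical issue is justifying the second integration by parts at $C^2$ regularity, which is fine because $g(R) \in C^1$; the real conceptual ingredient is the combination of divergence-freeness (which collapses the invariance equation into an $L^2$ energy identity) with H\"ormander's spanning condition (which does the geometric work).
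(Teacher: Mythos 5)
Your proof is correct and follows essentially the same route as the paper: convert \eqref{baseqinvm} to the pointwise adjoint equation $\mathcal A^\ast R=0$ via divergence-freeness of $f,g$ and density of $C^2(M_r)$, test against $R$ to obtain $\int_{M_r}|gR|^2\,d\lambda_r=0$, deduce $gR=fR=0$, propagate to $[g,f]R=0$, invoke the spanning condition to make $R$ locally constant, and finish with connectedness of $M_r$. The only cosmetic difference is that the paper cites density of $C^2(M_r)$ in $L^1(M_r,\lambda_r)$ whereas you use density in $C(M_r)$, and the paper uses directly that $f,g,[g,f]$ span $T_zM_r$ rather than appealing to the bracket-generated algebra of Corollary \ref{horm}; these are equivalent for the purpose at hand.
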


\begin{proof}
Using the standard formulae
$$
\int_{M_r}Yh\,d\lambda_r
=-\int_{M_r}h\operatorname{div}Y\,d\lambda_r,\qquad\operatorname{div}(hY)=Y(h)+h\operatorname{div}Y
$$
that hold for any smooth vector field $Y$ on $M_r$ and any smooth function $h$ on $M_r$, applying Lemma \ref{fgprop} and Proposition \ref{prop-charinv} and using the fact that $C^2(M_r)$ is dense in $L^1(M_r,\lambda_r)$, we get that $\nu$ satisfies \eqref{baseqinvm} iff 
the identity
\begin{equation}\label{dive}
fR=\frac 12g(gR)
\end{equation}
holds on $M_r$. But
$$
\int_{M_r}R(fR-\frac 12g(gR))\,d\lambda_r=\frac 12\int_{M_r}|gR|^2\,d\lambda_r
$$
as $f$ and $g$ are divergence-free, so we conclude that \eqref{dive} holds iff $gR=fR=0$. If $R$ is constant, this equality surely holds. For the converse implication, by definition of the Lie bracket, $[g,f]R=0$ holds. Since $f_z$, $g_z$ and $[g,f]_z$ span $T_zM_r$ for every $z\in M_r$ by Lemma \ref{fgprop}, we obtain that $R$ is locally constant. But $M_r$ is connected by Lemma \ref{fgprop}, hence $R$ is constant.
\end{proof}

\begin{theorem}\label{prop-horem} $\lambda_r$ is the unique invariant probability measure on $M_r$.
\end{theorem}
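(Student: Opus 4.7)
The plan is to combine Proposition \ref{prop-thdiv} with a hypoellipticity/H\"ormander argument. First, observe that Proposition \ref{prop-thdiv} applied to the constant density $R \equiv 1$ already shows that $\lambda_r$ itself is an invariant probability measure, so the content of the theorem is uniqueness. Now let $\nu$ be an arbitrary invariant probability measure on $\mathscr{B}(M_r)$; the strategy is to show that $\nu$ has a $C^2$ (in fact smooth) density with respect to $\lambda_r$, and then to invoke Proposition \ref{prop-thdiv} to conclude that this density is constant, hence equal to $1$.

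The key input for producing a smooth density is Corollary \ref{horm}, which verifies H\"ormander's bracket condition for the generator $\mathcal{A}$ on the $3$-dimensional manifold $M_r$. Invoking the geometric version of H\"ormander's theorem (Theorem \ref{hormanthm}), for every $t>0$ the transition probability $p_{t,x}$ is absolutely continuous with respect to $\lambda_r$ with a smooth transition density $k_t(x,y)$ that is jointly smooth in $(x,y) \in M_r \times M_r$. From invariance, $\nu = P^*_t \nu$, so for any Borel set $A \subseteq M_r$,
\begin{equation*}
\nu(A) \;=\; \int_{M_r} p_{t,x}(A)\,d\nu(x) \;=\; \int_A \Bigl( \int_{M_r} k_t(x,y)\,d\nu(x) \Bigr)\,d\lambda_r(y),
\end{equation*}
so $\nu$ has $\lambda_r$-density $R(y) = \int_{M_r} k_t(x,y)\,d\nu(x)$. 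Smoothness of $k_t$ in $y$ together with dominated convergence (using compactness of $M_r$ and boundedness of the relevant derivatives of $k_t$) show that $R \in C^2(M_r)$; moreover $R \geq 0$ and $\int_{M_r} R\,d\lambda_r = 1$.

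Finally, since $\nu$ is invariant and $d\nu = R\,d\lambda_r$ with $R \in C^2(M_r)$, equation \eqref{baseqinvm} is satisfied, and Proposition \ref{prop-thdiv} forces $R$ to be constant on the connected manifold $M_r$. The normalization $\int_{M_r} R\,d\lambda_r = 1$ together with $\lambda_r(M_r) = 1$ yields $R \equiv 1$, hence $\nu = \lambda_r$.

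The only non-routine step is justifying the appeal to H\"ormander's theorem on the curved manifold $M_r$ with the specific fields $f$ and $g$; this is precisely what Corollary \ref{horm} and Lemma \ref{fgprop} are designed to provide (the brackets $[g,f]$, $[f,[g,f]]$, $[g,[g,f]]$ computed there guarantee the Lie algebra spans the tangent space at every point, which is the standard hypothesis). Once hypoellipticity is in hand, the remainder is a short bootstrap from an abstract invariant measure to a smooth density, and an application of the already proved divergence-style identity in Proposition \ref{prop-thdiv}.
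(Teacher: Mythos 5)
Your overall architecture --- produce a $C^2$ density for an arbitrary invariant measure $\nu$ and then feed it into Proposition \ref{prop-thdiv} --- is exactly right, and your observation that invariance of $\lambda_r$ itself follows from Propositions \ref{prop-charinv} and \ref{prop-thdiv} with $R\equiv 1$ is correct. The gap is in how you produce the density. You claim that Corollary \ref{horm} together with Theorem \ref{hormanthm} yields, for each $t>0$, a smooth transition density $k_t(x,y)$ of $p_{t,x}$ with respect to $\lambda_r$. But Theorem \ref{hormanthm} applies to a measure $\mu$ satisfying the \emph{stationary} identity \eqref{hormmeasid}, i.e.\ a measure weakly annihilated by the adjoint of the time-independent generator. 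The transition probability $p_{t,x}$ for fixed $t$ satisfies no such identity; it solves the forward Kolmogorov (parabolic) equation, and to smooth it by hypoellipticity one must adjoin the time direction and verify the bracket-spanning condition on $(0,\infty)\times M_r$ (indeed on $(0,\infty)\times M_r\times M_r$ for joint smoothness in $(t,x,y)$). That verification is genuinely more work than Corollary \ref{horm}: it is exactly what the proof of Theorem \ref{smoothdens} does, introducing the fields $Y$, $X^1$, $X^2$ on the product manifold and computing a whole list of their brackets, plus a duality and density argument to obtain the defining identity for the measure $\Gamma$. As written, your appeal to ``Corollary \ref{horm} and Lemma \ref{fgprop}'' does not cover this step.

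The detour through transition densities is also unnecessary. An invariant $\nu$ satisfies $\int_{M_r}\mathcal Ah\,d\nu=0$ for all $h\in C^2(M_r)$ by Proposition \ref{prop-charinv}, and this is literally the hypothesis \eqref{hormmeasid} of Theorem \ref{hormanthm} with $M=M_r$, $Z=0$, $Y=f$ and $X^1=g/\sqrt{2}$; Corollary \ref{horm} supplies the spanning condition. Hence $\nu$ has a $C^\infty$ density with respect to the surface measure directly, and Proposition \ref{prop-thdiv} finishes the proof. This one-step application of the H\"ormander theorem to $\nu$ itself is the paper's argument; if you prefer your route, you must first prove (not merely assert) the smoothness of the transition densities as in Theorem \ref{smoothdens}.
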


\begin{proof}
Let $\nu$ be an inavriant measure. Since \eqref{baseqinvm} holds and the geometric version of the H\"ormander theorem \ref{hormanthm} is applicable due to Corollary \ref{horm}, we conclude that $\nu$ has a smooth density $R$ with respect to $\lambda_r$. But then $R=1$ on $M_r$ by Proposition \ref{prop-thdiv}.
\end{proof}

\section{The transition probabilities on $M_r$, $r>0$}

In this section, we continue the study of the Markov semigroup $(P_t)$ and its adjoint semigroup $(P^*_t)$ restricted to $M_r$ as set forth in Convention \ref{restrmr}, with $r>0$ fixed. We are going to show that the transition probabilities $p_{t,x}$ restricted to $\mathscr B(M_r)$ for $x\in M_r$ are absolutely continuous with respect to the normalized surface measure $\lambda_r$ on $M_r$ for every $(t,x)\in(0,\infty)\times M_r$ and that the density $p(t,x,\cdot)$ satisfies $p\in C^\infty((0,\infty)\times M_r\times M_r)$. The density $p(t,x,\cdot)$ should be denoted by $p_r(t,x,\cdot)$ to indicate the dependence on $r>0$ but we will not use this notation since $r$ is fixed in this section and we will not use the densities elsewhere in this paper.

An expert could be simply advised to apply the abstract results based on the geometric H\"ormander theorem in \cite[Theorem 3]{ich1} but we prefer to guide the reader through, to explain the  actual structure of the problem better.

For, let us define the adjoint operator
\begin{equation}
\mathcal A^*h=-f(h)+\frac 12g(g(h)),\qquad h\in C^2(M_r)
\end{equation}
to the operator $\mathcal A$ defined in \eqref{defopera}. Indeed, by Lemma \ref{fgprop}, 
\begin{equation}\label{duality}
\int_{M_r}(\mathcal Ah_1)h_2\,d\lambda_r=\int_{M_r}h_1\mathcal A^*h_2\,d\lambda_r,\qquad\forall h_1,h_2\in C^2(M_r)
\end{equation}
as $f$ and $g$ are divergence-free on $M_r$.

\begin{theorem}\label{smoothdens} The transition probabilities $p_{t,x}$ are absolutely continuous with respect to the normalized surface measure $\lambda_r$ on $M_r$ for every $(t,x)\in(0,\infty)\times M_r$ and the density $p(t,x,\cdot)$ satisfies $p\in C^\infty((0,\infty)\times M_r\times M_r)$.
\end{theorem}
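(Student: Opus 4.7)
The plan is to construct a smooth Schwartz kernel for the semigroup $(P_t)$ on $M_r$ by applying the geometric H\"ormander theorem (Theorem~\ref{hormanthm}) to an appropriate parabolic operator on the product manifold $\mathbb{R} \times M_r \times M_r$. I would define the distribution $K$ on $(0,\infty) \times M_r \times M_r$ by
$$\langle K, \Phi \rangle := \int_0^\infty \int_{M_r}\int_{M_r}\Phi(t,x,y)\,p_{t,x}(dy)\,\lambda_r(dx)\,dt,\qquad \Phi \in C^\infty_c((0,\infty)\times M_r\times M_r),$$
the goal being to show that $K$ agrees on this open set with some smooth $p \in C^\infty((0,\infty)\times M_r \times M_r)$ and then to identify $p(t,x,\cdot)$ with the density of $p_{t,x}$ relative to $\lambda_r$.

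For tensor-product test functions $\Phi = \eta \otimes \chi \otimes \psi$ with $\chi, \psi \in C^2(M_r)$,
$$\langle K, \Phi\rangle = \int_0^\infty \eta(t)\int_{M_r}\chi(x)\,P_t\psi(x)\,\lambda_r(dx)\,dt.$$
Using the backward Kolmogorov equation $\partial_t P_t\psi = \mathcal{A} P_t \psi = P_t \mathcal{A}\psi$ from Proposition~\ref{kolm3} and Theorem~\ref{semigrprop}, integration by parts in $t$, and the duality~\eqref{duality} applied in either the $x$- or the $y$-slot, I would derive the two distributional identities
$$(\partial_t - \mathcal{A}_x)\, K = 0 \quad \text{and} \quad (\partial_t - \mathcal{A}^*_y)\, K = 0.$$
Density of tensor products in $C^\infty_c$ extends them to all test functions; summing then gives $LK = 0$, where $L := 2\partial_t - \mathcal{A}_x - \mathcal{A}^*_y$ is a parabolic operator on $\mathbb{R}\times M_r\times M_r$ with spatial drift $f_x - f_y$ and diffusion vector fields $g_x/\sqrt{2}$, $g_y/\sqrt{2}$ (after an immaterial time rescaling).

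The crucial step is verifying the parabolic H\"ormander condition for $L$, namely $\mathscr{L}(f_x - f_y,\, g_x,\, g_y) = T_{(x,y)}(M_r\times M_r)$ pointwise. Since $g_x$ depends only on $x$ and $g_y$ only on $y$, the cross-brackets with $f_y$ and $f_x$ vanish, so
$$[f_x - f_y,\, g_x] = [f,g]_x, \qquad [g_x,\, [f,g]_x] = [g,[f,g]]_x = f_x,$$
where the last equality uses $[g,[f,g]] = f$, derived from $[g,[g,f]] = -f$ in Lemma~\ref{fgprop} via antisymmetry. Symmetrically, $-[f,g]_y$ and $-f_y$ lie in $\mathscr{L}$, so $\mathscr{L}$ contains $\{f_x, g_x, [f,g]_x, f_y, g_y, [f,g]_y\}$, which spans $T_xM_r \oplus T_yM_r$ by Corollary~\ref{horm} applied in each factor. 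Theorem~\ref{hormanthm} then yields hypoellipticity of $L$ on $\mathbb{R}\times M_r\times M_r$, so $K$ is represented on $(0,\infty) \times M_r \times M_r$ by a function $p \in C^\infty$.

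To finish, I would unwind the kernel identity for tensor-product test functions and combine it with joint continuity of $(t,x)\mapsto P_t\psi(x)$ (Proposition~\ref{markov}) and continuity of $p$ to conclude
$$P_t\psi(x) = \int_{M_r}\psi(y)\,p(t,x,y)\,\lambda_r(dy) \qquad \text{for all }(t,x)\in(0,\infty)\times M_r,\ \psi\in C(M_r),$$
which establishes both the absolute continuity $p_{t,x} \ll \lambda_r$ and the joint smoothness of the density. The main obstacle is the bracket bookkeeping for $L$ on the product manifold: the identity $[g,[g,f]] = -f$ from Lemma~\ref{fgprop} must be invoked in each factor separately to extract $f_x$ and $f_y$ individually from the combined drift $f_x - f_y$, since otherwise only the diagonal subalgebra of $TM_r \oplus TM_r$ would be reached.
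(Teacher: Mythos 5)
Your proof follows essentially the same route as the paper's: define the measure $\Gamma$ on $(0,\infty)\times M_r\times M_r$, show it is annihilated by the lifted parabolic operator (using the backward Kolmogorov equation, the duality \eqref{duality}, and density of tensor products), verify the H\"ormander bracket condition on the product via Lemma~\ref{fgprop} and Corollary~\ref{horm}, and invoke Theorem~\ref{hormanthm}. The only cosmetic differences are a sign convention in the combined drift (immaterial for the bracket condition) and that the paper obtains one of the two annihilation identities directly from the It\^o formula rather than from the backward Kolmogorov equation.
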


\begin{proof}
Consider the Riemannian manifold $N=(0,\infty)\times M_r\times M_r$ and define the Radon measure
$$
\Gamma(A)=\int_0^\infty\int_{M_r}\int_{M_r}\mathbf 1_A(t,x,z)\,dp_{t,x}(z)\,d\lambda_r(x)\,dt=\Bbb E\int_0^\infty\int_{M_r} 1_A(t,x,z^x(t))\,dt\,d\lambda_r(x),\,A\in\mathscr B(N).
$$
Every function $h\in C^\infty(N)$ has variables $(t,x,z)$ and we are going to indicate by $\mathcal A_z$ that the operator $\mathcal A$ is applied on the variable $z$ and by $\mathcal A^*_x$ that the operator $\mathcal A^*$ is applied on the variable $x$ of the function $h(t,x,z)$.

By the It\^o formula,
\begin{equation}\label{somothito1}
\int_0^\infty\int_{M_r}\left(\frac{\partial H}{\partial t}+\mathcal AH\right)\,dp_{t,x}\,dt=0\qquad\forall H\in C^\infty_{comp}((0,\infty)\times M_r)
\end{equation}
holds for every $x\in M_r$ hence
\begin{equation}\label{somothito2}
\int_N\left(\frac{\partial h}{\partial t}+\mathcal A_zh\right)\,d\Gamma=0\qquad\forall h\in C^\infty_{comp}(N).
\end{equation}
Let $h_1\in C^\infty_{comp}(0,\infty)$, $h_2,h_3\in C^\infty(M_r)$ and define $H(t,x)=h_1(t)h_2(x)$, $h(t,x,z)=h_1(t)h_2(x)h_3(z)$ and $v(t,x)=P_th_3(x)$. Then
$$
\int_N\left(\frac{\partial h}{\partial t}+\mathcal A^*_xh\right)\,d\Gamma=\int_0^\infty\int_{M_r}\left(\frac{\partial H}{\partial t}+\mathcal A^*H\right)v\,d\lambda_r\,dt=\int_0^\infty\int_{M_r}H\left(-\frac{\partial v}{\partial t}+\mathcal Av\right)\,d\lambda_r\,dt=0
$$
by \eqref{kolm2} and the duality \eqref{duality}. In fact, 
\begin{equation}\label{somothito3}
\int_N\left(\frac{\partial h}{\partial t}+\mathcal A^*_xh\right)\,d\Gamma=0,\qquad\forall h\in C^\infty_{comp}(N)
\end{equation}
by a density argument as shown in Proposition \ref{density}.

Altogether we have obtained that 
$$
\int_N\left(2\frac{\partial h}{\partial t}+\mathcal A^*_xh+\mathcal A_zh\right)\,d\Gamma=0,\qquad\forall h\in C^\infty_{comp}(N).
$$
In order to apply the geometric H\"ormander theorem \ref{hormanthm}, we define the vector fields
$$
Y(t,x,z)=\left(\begin{array}{c}2\\-f(x)\\f(z)\end{array}\right),\qquad X^1 (t,x,z)=\left(\begin{array}{c}0\\g(x)\\0\end{array}\right),\qquad X^2(t,x,z)=\left(\begin{array}{c}0\\0\\g(z)\end{array}\right)
$$
where the vector field $Y$ corresponds to the operator $h\mapsto 2\frac{\partial h}{\partial t}-f_x(h)+f_y(h)$, the vector field $X^1$ to the operator $h\mapsto g_x(h)$ and the vector field $X^2$ to the operator $h\mapsto g_z(h)$. Defining also $h=[g,f]$ on $M_r$, we get by Lemma \ref{fgprop} that
$$
[Y,X^1]=\left(\begin{array}{c}0\\h(x)\\0\end{array}\right),\quad [Y,X^2]=-\left(\begin{array}{c}0\\0\\h(z)\end{array}\right),\quad [X^1,X^2]=0,
$$
$$
[Y,[Y,X^1]=-r^2X^1,\quad[Y,[Y,X^2]]=-r^2X^2,\quad[X^1,[Y,X^1]=-\left(\begin{array}{c}0\\f(x)\\0\end{array}\right),\quad [X^1,[Y,X^2]]=0,
$$
$$
[X^2,[Y,X^1]=0,\quad[X^2,[Y,X^2]]=\left(\begin{array}{c}0\\0\\f(z)\end{array}\right),\quad[[Y,X^1],[Y,X^2]]=0.
$$
At this stage we see that 
$$
\mathscr L(Y,X^1,X^2)(t,x,z)=\Bbb R\times T_xM_r\times T_zM_r=T_{(t,x,z)}N,\qquad\forall (t,x,z)\in N
$$
so the geometric H\"ormander theorem \ref{hormanthm} is applicable and $\Gamma$ has a smooth density $p\in C^\infty(N)$ with respect to $dt\otimes\lambda_r\otimes\lambda_r$.

Let $\varphi\in C(M_r)$. Then, by the standard measure theoretical properties of integrals,
\begin{equation}\label{fyty}
P_t\varphi(x)=\int_{M_r}\varphi(z)p(t,x,z)\,d\lambda_r(z)
\end{equation}
holds for $dt\otimes\lambda_r$-almost every $(t,x)$. But since both sides are continuous in $(t,x)$ (the right hand side by Theorem \ref{semigrprop}), the identity \eqref{fyty} holds for every $(t,x)\in(0,\infty)\times M_r$. By standard procedure, we extend \eqref{fyty} to hold for every $\varphi\in B_b(M_r)$ and every $(t,x)\in(0,\infty)\times M_r$.
\end{proof}

The following result recasts Corollary \ref{rotations} in terms of the transition densities.

\begin{corollary}\label{rotations_dens} Let $Q$ be a $3\times 3$-unitary matrix. Denote by $\widetilde Q=\operatorname{diag}\,[Q,Q]$. Then 
$$
p(t,x,y)=p(t,\widetilde Qx,\widetilde Qy)
$$
holds for every $(t,x,y)\in(0,\infty)\times M_r\times M_r$.
\end{corollary}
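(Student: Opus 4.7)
The plan is to combine Corollary \ref{rotations}, which expresses the invariance of the transition probabilities under $\widetilde Q$ on the level of measures, with the existence of the smooth density $p$ provided by Theorem \ref{smoothdens}, and deduce the pointwise identity by a change-of-variables argument.

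First, I would translate the conclusion of Corollary \ref{rotations}, namely $P_t\varphi(\widetilde Qx)=P_t(\varphi\circ\widetilde Q)(x)$, into an integral identity using the density: for any $\varphi\in C(M_r)$ and every $x\in M_r$,
$$
\int_{M_r}\varphi(y)\,p(t,\widetilde Qx,y)\,d\lambda_r(y)=\int_{M_r}\varphi(\widetilde Qy)\,p(t,x,y)\,d\lambda_r(y).
$$
The second ingredient is that $\widetilde Q=\operatorname{diag}[Q,Q]$ is an isometry of $M_r\subset\Bbb R^6$, because $Q$ is orthogonal, acts on $\Bbb S^2$ by an isometry, and restricts to a linear isometry on each tangent plane; in particular, $\widetilde Q$ is a diffeomorphism of $M_r$ preserving the normalized surface (Riemannian) measure $\lambda_r$.

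Using this invariance, I would change variables $z=\widetilde Qy$ on the right-hand side, obtaining
$$
\int_{M_r}\varphi(y)\,p(t,\widetilde Qx,y)\,d\lambda_r(y)=\int_{M_r}\varphi(z)\,p(t,x,\widetilde Q^{-1}z)\,d\lambda_r(z).
$$
Since this holds for every $\varphi\in C(M_r)$, and both densities are continuous by Theorem \ref{smoothdens}, we conclude pointwise that $p(t,\widetilde Qx,y)=p(t,x,\widetilde Q^{-1}y)$ for all $y\in M_r$. Substituting $\widetilde Qy$ for $y$ then yields the claimed identity $p(t,x,y)=p(t,\widetilde Qx,\widetilde Qy)$ on $(0,\infty)\times M_r\times M_r$.

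I do not expect any genuine obstacle here: the only care needed is in checking that $\widetilde Q$ really preserves the intrinsic surface measure $\lambda_r$ (which is immediate because $\widetilde Q$ is the restriction of a Euclidean isometry of $\Bbb R^6$ that leaves $M_r$ invariant) and in passing from the $\lambda_r$-almost-everywhere identity obtained from the integral equality to the pointwise identity, which is automatic from joint continuity of $p$ on $(0,\infty)\times M_r\times M_r$.
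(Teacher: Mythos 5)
Your proposal is correct and follows exactly the paper's approach: the paper's proof is a one-line observation that $\widetilde Q$ is a $\lambda_r$-preserving diffeomorphism of $M_r$ (being the restriction of a Euclidean isometry) combined with an appeal to Corollary \ref{rotations}, and you have simply spelled out the change-of-variables calculation and the continuity argument that upgrades the $\lambda_r$-a.e.\ identity to a pointwise one.
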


\begin{proof}
We just realize that $\widetilde Q$ is a measure preserving diffeomorphism on $M_r$ (as a restriction of an isometry on $\Bbb R^6$) and then we apply Corollary \ref{rotations}.
\end{proof}

\section{Controlability in $M_r$, $r>0$}

In this section, we are going to examine the supports of the probability measures $p_{t,x}$ on $\mathscr B(M_r)$ for $x\in M_r$. Again, in this section, the Markov semigroup $(P_t)$ and its adjoint semigroup $(P^*_t)$ are restricted to $M_r$ as in Convention \ref{restrmr}, with $r>0$ fixed.

\begin{theorem}\label{supi} Let $t\ge 2\pi/r$. Then $\operatorname{supp}\,p_{t,x}=M_r$ holds for every $x\in M_r$.
\end{theorem}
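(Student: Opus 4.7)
The approach is to apply the Stroock--Varadhan support theorem to \eqref{gsde} restricted to the compact submanifold $M_r$, and then to solve the induced deterministic control problem by an explicit two-geodesic construction. Extending $f,g$ to smooth compactly supported vector fields on $\Bbb R^6$, both the SDE trajectory and any controlled trajectory started at $x\in M_r$ stay in $M_r$ (cf.\ Proposition \ref{gext}), so the classical support theorem yields
$$
\operatorname{supp} p_{t,x}=\overline{\{y_h(t):h\in C^1([0,t]),\ h(0)=0\}},
$$
where $y_h$ solves $\dot y_h=f(y_h)+g(y_h)\dot h$ with $y_h(0)=x$. On $M_r$ the flow of $f$ is the speed-$r$ geodesic flow
$$
\Phi_f^s(p,v)=\bigl(\cos(rs)\,p+\sin(rs)\,v/r,\ -r\sin(rs)\,p+\cos(rs)\,v\bigr),
$$
whose base point completes a full great circle in time $2\pi/r$; the flow of $g$ fixes $p$ and rotates $v$ in $T_p\Bbb S^2$ by the flow parameter, $\Phi_g^s(p,v)=(p,\cos s\,v+\sin s\,(p\times v))$.

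A standard rapid-control approximation then shows that the closure of the reachable set contains every
$$
z=\Phi_f^{t_2}\circ\Phi_g^{\sigma_2}\circ\Phi_f^{t_1}\circ\Phi_g^{\sigma_1}(x),\qquad t_1,t_2\ge 0,\ t_1+t_2=t,\ \sigma_1,\sigma_2\in\Bbb R:
$$
concentrating the variation of $h$ on a subinterval of length $\delta$ makes the $f$-contribution $O(\delta)$ there, and after rescaling the trajectory converges to the $g$-flow with parameter equal to the total variation of $h$ on that subinterval. Consequently it suffices to prove the following reachability claim: for every $t\ge 2\pi/r$ and every pair $x=(p_0,v_0),\,y=(p_1,v_1)$ in $M_r$, such a two-leg composition connects $x$ to $y$.

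Since the rotations $\sigma_1,\sigma_2$ may be chosen freely, the task reduces to finding $t_2\in[0,t]$ and an intermediate point $q\in\Bbb S^2$ joined to $p_0$ by a great-circle arc of length $r(t-t_2)$ and joined to $p_1$ by a great-circle arc of length $rt_2$ terminating with velocity $v_1$. The second condition uniquely fixes $q=q(t_2):=\cos(rt_2)\,p_1-\sin(rt_2)\,v_1/r$, while the first is equivalent to $\cos(r(t-t_2))=\langle p_0,q(t_2)\rangle$, i.e.
$$
F(t_2):=\cos\bigl(r(t-t_2)\bigr)-\langle p_0,q(t_2)\rangle=(\cos rt-a)\cos(rt_2)+(\sin rt+b)\sin(rt_2)=0,
$$
where $a=\langle p_0,p_1\rangle$ and $b=\langle p_0,v_1\rangle/r$. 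As a function of $rt_2\in[0,2\pi]$, $F$ is a sinusoid with vanishing integral over one full period; either $F\equiv 0$ (any $t_2$ works) or $F$ takes both signs on $[0,2\pi/r]$ and hence has a zero $t_2\in[0,2\pi/r]\subseteq[0,t]$ by the intermediate value theorem. Because $t\ge 2\pi/r$, the remaining time $t_1=t-t_2$ is nonnegative, which closes the argument.

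The principal obstacles are (i)~formally importing the Stroock--Varadhan theorem to the manifold $M_r$, which is routinely handled by the smooth extension of $f,g$ to $\Bbb R^6$ above; and (ii)~the degenerate configurations where $\sin(rt_2)=0$ or $p_0=\pm q(t_2)$, in which the intermediate-velocity prescription or the choice of great circle through $p_0$ and $q(t_2)$ is not unique. These form a thin subset of the zero set of $F$ and can be bypassed by a small perturbation of $t_2$, or handled by a short direct argument in boundary cases (e.g.\ when $p_0=p_1$, rotate $v_0$ into $v_1$ at $p_0$ and then follow the corresponding great circle once around in time $2\pi/r$).
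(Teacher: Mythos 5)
Your proposal is correct, and it takes a genuinely different (and arguably cleaner) route than the paper. Both reduce to a deterministic controllability problem via the Stroock--Varadhan support theorem (the paper's Lemma~\ref{lem93}), but then diverge. The paper works with solutions corresponding to \emph{constant} controls $a$, whose orbits are small circles on $\Bbb S^2$ of diameter $2r/\sqrt{r^2+a^2}$; this forces a nontrivial auxiliary geometric result, Proposition~\ref{circles} on connecting oriented circles, together with a four-step strategy (tiny geodesic step, very small circle to escape the target, connecting circle, and a final waiting loop to fill the remaining time up to~$t$). You instead invoke the standard rapid-control approximation to realize, in the closure of the reachable set, every composition $\Phi_f^{t_2}\circ\Phi_g^{\sigma_2}\circ\Phi_f^{t_1}\circ\Phi_g^{\sigma_1}$ of pure geodesic flow and pure tangent-vector rotation (which fixes the base point), and then solve the resulting two-leg control problem by the intermediate value theorem applied to the single sinusoid $F$. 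This avoids the oriented-circles machinery entirely and makes transparent why the bound $2\pi/r$ appears (it is the period of $\Phi_f$); it also renders the waiting step unnecessary, since the IVT root $t_2\in[0,2\pi/r]$ automatically yields $t_1=t-t_2\ge0$. One minor remark: the degenerate cases you flag at the end are in fact harmless and need no perturbation. Whenever $F(t_2)=0$ and $q(t_2)=\pm p_0$, the equation forces $\cos\bigl(r(t-t_2)\bigr)=\pm1$, so the geodesic from $p_0$ (with \emph{any} initial velocity of length $r$, adjustable by $\Phi_g^{\sigma_1}$) genuinely lands at $\pm p_0$ after time $t_1$; and the arrival velocity can always be turned to the required $w_0=r\sin(rt_2)\,p_1+\cos(rt_2)\,v_1$ by $\Phi_g^{\sigma_2}$, since both vectors lie on the same tangent circle at $q(t_2)$. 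You should also replace ``total variation of $h$'' by ``net displacement of $h$'' in the rapid-control step, since the limiting rotation parameter is the signed increment, not the variation.
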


\subsection{General support result}

Let $x\in M_r$ and denote by $V^{x,a}$ the solutions of the ordinary differential equation 
\begin{equation}\label{adithree}
X^\prime=f(X)+a(t)g(X),\qquad X(0)=x
\end{equation}
on $M_r$ where $a\in L^1_{loc}([0,\infty))$ and $f$ and $g$ are defined in \eqref{defvf}.

\begin{remark} It can be checked analogously as in the proof of Proposition \ref{gext} that the solutions $V^{x,a}$ take values in $M_r$ and are therefore global.
\end{remark}

The next lemma tells us that, to describe the support of the probabilities $p_{t,x}$ for $x\in M_r$, it is sufficient and necessary to study solutions of the ordinary differential equation \eqref{adithree}.

\begin{lemma}\label{lem93} Let $t>0$ and $x\in M_r$. Then 
\begin{equation}\label{aditfive}
\operatorname{supp}\,p_{t,x}=\overline{\{V^{x,a}(t):a\in L^1(0,t)\}}^{M_r}.
\end{equation}
\end{lemma}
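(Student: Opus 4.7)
The statement is the classical Stroock--Varadhan support theorem for the Stratonovich SDE \eqref{gsde}, specialized to its time-$t$ marginal on the compact submanifold $M_r \subset \mathbb{R}^6$. The plan is to reduce to a compactly supported system on $\mathbb{R}^6$ and then to prove the two inclusions separately, using Girsanov's theorem in one direction and a Wong--Zakai approximation in the other.

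First I would reduce to an ambient Euclidean problem by extending $f$ and $g$ to compactly supported smooth vector fields $\tilde f, \tilde g$ on $\mathbb{R}^6$ agreeing with $f, g$ in a neighborhood of $M_r$. The invariance computation from Proposition \ref{gext}, applied now to $\tfrac{d}{dt}|u|^2$ and $\tfrac{d}{dt}|v|^2$ along the controlled trajectory $\dot X = f(X) + a(s)g(X)$, shows that $V^{x,a}$ itself remains in $M_r$ for every $a \in L^1(0,t)$. Thus both sides of \eqref{aditfive} are unchanged by working with $\tilde f, \tilde g$, and it suffices to prove the support identity for the extended system in $\mathbb{R}^6$ with the conclusion restricted to $M_r$.

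For $\operatorname{supp}(p_{t,x}) \supseteq \overline{\{V^{x,a}(t) : a \in L^1(0,t)\}}^{M_r}$, I would fix $a \in L^1(0,t)$ and $\varepsilon > 0$. By density of $C^\infty_c(0,t)$ in $L^1(0,t)$ together with continuous dependence of ODE solutions on the forcing, it suffices to treat $a$ smooth and compactly supported. Then Girsanov's theorem produces an equivalent measure $\mathbb{Q}$ under which $\tilde W_s := W_s - \int_0^s a(u)\, du$ is a Brownian motion, so under $\mathbb{Q}$ the process $z^x$ solves the controlled equation $\dot X = f(X) + a(s) g(X)$ perturbed by the Stratonovich noise $g(z^x) \circ d\tilde W_s$. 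A Brownian small-ball estimate, combined with standard pathwise stability of Stratonovich SDEs with respect to the driving signal, then gives $\mathbb{Q}\bigl[\sup_{s \le t}|z^x(s) - V^{x,a}(s)| < \varepsilon\bigr] > 0$, and the equivalence $\mathbb{Q} \sim \mathbb{P}$ transfers this to $\mathbb{P}$.

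For the reverse inclusion I would use a Wong--Zakai approximation: let $W^n$ be the piecewise-linear interpolation of $W$ on a dyadic mesh of $[0,t]$ and define $z_n$ as the solution of the random ODE $\dot z_n = f(z_n) + \dot W^n g(z_n)$; then $z_n = V^{x,\dot W^n}$ with $\dot W^n \in L^1(0,t)$ almost surely, and the Wong--Zakai theorem yields $z_n \to z^x$ in probability uniformly on $[0,t]$. Since each $z_n(t)$ lies in the reachable set, any point $y \in M_r$ outside the closure of the reachable set admits an open neighborhood $U$ missed by every $z_n(t)$, so $\mathbb{P}[z^x(t) \in U] = 0$, proving the inclusion. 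The main obstacle is the first direction, where the Brownian small-ball/Girsanov argument requires some care to extract positivity from the shifted law; the Wong--Zakai direction is essentially routine given the smoothness of $f$ and $g$.
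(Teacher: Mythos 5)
Your overall strategy is sound and the two inclusions you argue are exactly the content of the Stroock--Varadhan support theorem; the difference is that the paper does not re-prove that theorem but simply cites it. The paper's proof has three ingredients: (i) the same reduction you make, replacing $f,g$ by compactly supported extensions $\tilde f,\tilde g$, justified by the a priori bound $|z^x(t)|=|x|$ from Proposition \ref{gext} and the analogous invariance of $M_r$ under the controlled flow $V^{x,a}$; (ii) a direct appeal to the support theorem of \cite{strvar} at the level of the path law $\mu=\operatorname{Law}(z^x)$ on $C([0,t];\Bbb R^6)$; (iii) the projection step $\operatorname{supp}(\pi_t\#\mu)=\overline{\pi_t[\operatorname{supp}\mu]}$ for the continuous evaluation $\pi_t$, which converts the path-level statement into \eqref{aditfive}. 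You bypass (iii) by arguing both inclusions directly for the time-$t$ marginal, which is fine, but you pay for it by having to reconstruct (ii): your Girsanov plus small-ball direction and your Wong--Zakai direction are precisely the standard proof of the cited theorem. One point in your sketch deserves a warning: ``standard pathwise stability of Stratonovich SDEs with respect to the driving signal'' is false in general --- the It\^o--Stratonovich solution map is not continuous in the uniform norm of the driver, which is exactly why the support theorem is nontrivial. In the present setting it can be rescued because $W$ is one-dimensional with a single diffusion vector field $g$, so the Doss--Sussmann representation $z=\Phi(W,y)$ makes the solution map genuinely continuous in $\|W\|_\infty$; alternatively one must run the conditioning argument of Stroock--Varadhan. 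If you invoke Doss--Sussmann explicitly, your route is complete and arguably more self-contained than the paper's, at the cost of redoing work the paper delegates to the literature.
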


\begin{proof}
Let $\tilde f$, $\tilde g$ be smooth compactly supported vector fields on $\Bbb R^6$ and denote by $\mu$ the law of the solution of the equation
\begin{equation}\label{adione}
dX=\tilde f(X)+\tilde g(X)\circ dW,\qquad X(0)=x
\end{equation}
on $\mathscr B(C([0,t];\Bbb R^6))$. Let also $a\in L^1(0,t)$ and denote by $v^a$ the solution of
\begin{equation}\label{aditwo}
X^\prime=\tilde f(X)+a(t)\tilde g(X),\qquad X(0)=x.
\end{equation}
Then, according to the Support theorem of Stroock and Varadhan \cite{strvar} (see also \cite{aks}, \cite{bengra}, \cite{bagl}, \cite{gyopro}, \cite{macke} for generalizations or shorter proofs),
$$
\operatorname{supp}\mu=\overline{\{v^a:a\in L^1(0,t)\}}
$$
where the closure and the support are taken in $C([0,t];\Bbb R^6)$. Since $v^{a_n}\to v^a$ uniformly on $[0,t]$ if $a_n\to a$ in $L^1(0,t)$ and $A$ is a dense subset in $L^1(0,t)$, it also holds
$$
\operatorname{supp}\mu=\overline{\{v^a:a\in A\}}.
$$
To get back to our problem \eqref{gsde}, let $\tilde f$, $\tilde g$ be smooth compactly supported vector fields on $\Bbb R^6$ such that $\tilde f=f$ and $\tilde g=g$ on the centered ball in $\Bbb R^6$ of the radius $R=|x|$. Then the solution $X$ coincides with $z^x$ being the solution of \eqref{gsde} with $z^x(0)=x$. Also, by uniqueness,  $V^{x,a}=v^a$. Thus we conclude that 
\begin{equation}\label{aditfour}
\operatorname{supp}\,(\operatorname{Law}\,z^x)=\overline{\{V^{x,a}:a\in L^1(0,t)\}}
\end{equation}
where both the support and the closure are taken in $C([0,t];M_r)$ being a closed subset of $C([0,t];\Bbb R^6)$.

Now consider the projection $\pi_t:C([0,t];M_r)\to M_r:\xi\mapsto\xi(t)$. Since $\pi$ is continuous, 
$$
\overline{\pi_t[\operatorname{supp}\,(\operatorname{Law}\,z^x)]}=\operatorname{supp}\,(\operatorname{Law}\,\pi_t(z^x))=\operatorname{supp}\,p_{t,x},
$$
and by continuity of $\pi_t$ and \eqref{aditfour}, we also have
$$
\overline{\pi_t[\operatorname{supp}\,(\operatorname{Law}\,z^x)]}=\overline{\pi_t[\overline{\{V^{x,a}:a\in L^1(0,t)\}}]}=\overline{\pi_t[\{V^{x,a}:a\in L^1(0,t)\}]}=\overline{\{V^{x,a}(t):a\in L^1(0,t)\}}.
$$
\end{proof}

\subsection{The control problem} In view of Lemma \ref{lem93}, it remains to prove that the ordinary differential equation \eqref{adithree} can be controlled to hit every point in $M_r$ after time $2\pi/r$. It turns out that it is necessary to enter deeper to the geometry of the $2D$ sphere.

For consider the equation \eqref{adithree} with a constant control $a\in\Bbb R$
\begin{equation}\label{aditsix}
w^{\prime\prime}=-|w^\prime|^2w+a w\times w^\prime
\end{equation}
and with the initial condition $w(0)=p$, $w^\prime(0)=\xi$ for $x=(p,\xi)\in M_r$. It can be guessed (and consequently checked) from rotational symmetries of \eqref{aditsix} that the unique solution has the form
\begin{equation}\label{wsol}
w^{x,a}(t)=\frac{a}{b}E_1^{x,a}+\frac{r}{b}E_2^{x,a}\cos(bt)+\frac{r}{b}E_3^{x,a}\sin(bt)
\end{equation}
$$
E^{x,a}_1=\frac{a}{b}p+\frac{1}{b}p\times\xi,\quad E^{x,a}_2=\frac{r}{b}p-\frac{a}{rb}p\times\xi,\quad E^{x,a}_3=\frac 1r\xi
$$
where $b=\sqrt{r^2+a^2}$. Since $\{E^{x,a}_1,E^{x,a}_2,E^{x,a}_3\}$ is orthonormal with $\operatorname{det}\,(E^{x,a}_1,E^{x,a}_2,E^{x,a}_3)=1$, we deduce that $w^{x,a}$ is a parametrization of a circle on $\Bbb S^2$ with the derivative of constant length $r$. 

\begin{lemma}
A $C^2$-smooth curve such that $|w|_{\Bbb R^3}=1$ and $|w^\prime|_{\Bbb R^3}=r$ satisfies the equation \eqref{aditsix} for some control $a\in\Bbb R$ iff it parametrizes a non-degenerate circle\footnote{Here ``non-degenerate'' means that the radius of the circle is strictly positive.} on $\Bbb S^2$.
\end{lemma}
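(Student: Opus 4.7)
The plan is to handle the two directions separately. For the forward direction, the explicit formula \eqref{wsol} derived just before the lemma already does the work: for any $a\in\Bbb R$ and any $x=(p,\xi)\in M_r$, the unique solution $w^{x,a}$ of \eqref{aditsix} parametrizes a circle on $\Bbb S^2$ with center $\frac{a}{b}E^{x,a}_1$ and radius $r/b>0$, where $b=\sqrt{r^2+a^2}$. Since $r>0$ the radius is strictly positive, so this circle is non-degenerate.

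For the converse, let $w$ be a $C^2$ curve with $|w|=1$, $|w'|=r$ parametrizing a non-degenerate circle on $\Bbb S^2$. Differentiating the constraints $|w|^2=1$ and $|w'|^2=r^2$ yields $\langle w,w'\rangle=0$, $\langle w'',w'\rangle=0$ and $\langle w'',w\rangle=-r^2$; together with $|w|=1$ and $|w'|=r$ this makes $\{w,w',w\times w'\}$ an orthogonal frame of $\Bbb R^3$ with norms $1,r,r$, and the decomposition of $w''$ in this frame is forced to be
\begin{equation}\label{decompw}
w''(t) \;=\; -r^2\,w(t) + a(t)\,\bigl(w(t)\times w'(t)\bigr)
\end{equation}
for a uniquely determined real function $a(\cdot)$. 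The task thus reduces to showing that $a$ is constant.

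For this I would use that a non-degenerate circle on $\Bbb S^2$ is the intersection of $\Bbb S^2$ with some affine plane $\{x\in\Bbb R^3:\langle x,n\rangle=c\}$ for a unit vector $n$ and some $c\in(-1,1)$. Differentiating $\langle w,n\rangle=c$ twice gives $\langle w'',n\rangle=0$, so pairing \eqref{decompw} with $n$ yields $a(t)\,\langle w(t)\times w'(t),n\rangle = r^2 c$. Now $w-cn$ and $w'$ both lie in $n^\perp$, are mutually perpendicular (since $\langle w,w'\rangle=\langle n,w'\rangle=0$) and have lengths $\sqrt{1-c^2}$ and $r$, so $(w-cn)\times w'=\pm r\sqrt{1-c^2}\,n$; since $cn\times w'\in n^\perp$, this forces $\langle w\times w',n\rangle = \pm r\sqrt{1-c^2}$, a nonzero constant thanks to non-degeneracy. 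Hence $a(t)=\pm rc/\sqrt{1-c^2}$ is constant along $w$, which completes the proof.

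The main obstacle is conceptual rather than computational: one has to recognize that the coefficient $a(t)$ in \eqref{decompw} is (up to a factor) the geodesic curvature of $w$ on $\Bbb S^2$, and the lemma really says that curves of constant geodesic curvature on $\Bbb S^2$ are exactly its circles. Non-degeneracy enters precisely to guarantee $\sqrt{1-c^2}>0$ so that the cross-product identity above yields a nonzero constant and $a$ can be solved for.
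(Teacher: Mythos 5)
Your proof is correct. Note that the paper states this lemma without any proof: the ``only if'' direction is implicit in the preceding display \eqref{wsol} (the explicit solution for constant $a$ is visibly a circle of radius $r/\sqrt{r^2+a^2}>0$), and the converse is simply left to the reader. Your argument fills that gap cleanly. The forward direction matches the paper's computation (plus the standard uniqueness-of-ODE-solutions remark identifying an arbitrary solution with $w^{x,a}$ for $x=(w(0),w'(0))$, which does lie in $M_r$ since differentiating $|w|^2=1$ gives $w(0)\perp w'(0)$). For the converse, your key steps all check out: the constraints force $w''=-r^2w+a(t)\,w\times w'$ in the orthogonal frame $\{w,w',w\times w'\}$ of norms $1,r,r$; writing the circle as $\{\langle x,n\rangle=c\}\cap\Bbb S^2$ and differentiating twice gives $a(t)\langle w\times w',n\rangle=r^2c$; and the cross-product computation shows $\langle w\times w',n\rangle=\pm r\sqrt{1-c^2}$, which is nonzero by non-degeneracy and hence of constant sign by continuity and connectedness of the parameter interval, so $a$ is the constant $\pm rc/\sqrt{1-c^2}$. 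This is exactly the ``circles are the curves of constant geodesic curvature'' statement you identify, and it is consistent with the paper's Remark \ref{somepropcirc}, which describes circles via a plane $P$ and a perpendicular center $v$ with $|v|<1$ — the same data as your $(n,c)$. One could alternatively argue the converse by invoking uniqueness: given the circle and a point $(p,\xi)$ on it in $M_r$, choose the constant $a$ so that $w^{(p,\xi),a}$ from \eqref{wsol} traces the same oriented circle and conclude the two parametrizations coincide; your route is more self-contained and makes the geometric meaning of $a$ explicit.
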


Hence, solutions of \eqref{adithree} can be regarded as oriented circles in $\Bbb S^2$. 

\begin{definition} In the sequel, we are going to consider pairs $(K,Y)$ where $K$ is a non-degenerate circle on $\Bbb S^2$ and $Y$ is a vector field on $K$ with $|Y_p|=r$ for every $p\in K$. Such pairs are going to be called oriented circles in $\Bbb S^2$ for simplicity.
\end{definition}

\begin{remark}\label{somepropcirc} Any non-degenerate circle $K$ in $\Bbb S^2$ can be described in a unique way as $K=(v+P)\cap\Bbb S^2$ where $P$ is a two-dimensional subspace in $\Bbb R^3$, $v\in\Bbb R^3$ is perpendicular to $P$ and $|v|<1$. Here the vector $v$ is the center of the circle $K$ and $P$ is the plane of the circle. Obviously, if $\bar v\in\Bbb R^3$ then $K=(\bar v+P)\cap\Bbb S^2$ iff $\bar v-v\in P$. Also 
$$
T_zK=\{p\in P:p\perp z\}=\{p\in P:p\perp (z-v)\},\qquad\forall z\in K.
$$
If we define $\theta=\sqrt{1-|v|^2}$, $\{p_1,p_2\}$ is an orthonormal basis in $P$ and
$$
Y_z=\frac{r}{\theta}\left[-\langle z,p_1\rangle p_2+\langle z,p_2\rangle p_1\right],\qquad z\in K
$$
then $\{Y,-Y\}$ are the only two vector fields on $K$ of length $r$.
\end{remark}

\begin{lemma} Let $x=(p,\xi)\in M_r$ and define the circle on $\Bbb S^2$
$$
K=(p+\operatorname{span}\,\{E^{x,a}_2,E^{x,a}_3\})\cap\Bbb S^2
$$
in the notation of \eqref{wsol} and the vector field on $K$ of length $r$
$$
Y(z)=-b\langle z,E^{x,a}_3\rangle E^{x,a}_2+b\langle z,E^{x,a}_2\rangle E^{x,a}_3,\qquad z\in K^{x,a}
$$
where $b=\sqrt{r^2+a^2}$. Then $K$ is the orbit of $w^{x,a}$ and $Y(w^{x,a})=(w^{x,a})^\prime$ holds on $\Bbb R$.
\end{lemma}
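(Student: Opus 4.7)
The plan is a direct verification based on the orthonormality of the frame $\{E^{x,a}_1, E^{x,a}_2, E^{x,a}_3\}$ noted just before the lemma; write $E_i$ for $E^{x,a}_i$ throughout.

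First, to show $K$ is the orbit of $w^{x,a}$, I would express $p$ in the frame $\{E_1, E_2, E_3\}$. Combining the definitions $E_1 = \frac{a}{b}p + \frac{1}{b}(p\times\xi)$ and $E_2 = \frac{r}{b}p - \frac{a}{rb}(p\times\xi)$ (multiplying the first by $a/r$ and adding) yields $p = \frac{a}{b}E_1 + \frac{r}{b}E_2$. Substituting into \eqref{wsol},
\[
w^{x,a}(t) - p = \tfrac{r}{b}(\cos(bt)-1)E_2 + \tfrac{r}{b}\sin(bt)E_3 \in \operatorname{span}\{E_2,E_3\},
\]
while $|w^{x,a}(t)|=1$ since $w^{x,a}$ parametrises a curve on $\mathbb S^2$ (this follows from the analysis leading to \eqref{wsol}, but can also be reverified by computing $\frac{d}{dt}|w^{x,a}|^2=0$ from \eqref{aditsix} together with $|w^{x,a}(0)|=1$). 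Hence $w^{x,a}(\mathbb R)\subseteq K$, and surjectivity onto $K$ is immediate from the surjectivity of $t\mapsto (\cos(bt),\sin(bt))$.

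Second, to identify $(w^{x,a})'$ with $Y\circ w^{x,a}$, differentiate \eqref{wsol} to obtain
\[
(w^{x,a})'(t) = -r\sin(bt)\,E_2 + r\cos(bt)\,E_3.
\]
On the other hand, using orthonormality of $\{E_1,E_2,E_3\}$,
\[
\langle w^{x,a}(t), E_2\rangle = \tfrac{r}{b}\cos(bt), \qquad \langle w^{x,a}(t), E_3\rangle = \tfrac{r}{b}\sin(bt),
\]
so
\[
Y(w^{x,a}(t)) = -b\cdot\tfrac{r}{b}\sin(bt)\,E_2 + b\cdot\tfrac{r}{b}\cos(bt)\,E_3 = (w^{x,a})'(t),
\]
as required.

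There is no serious obstacle: both claims reduce to routine bookkeeping in the orthonormal basis $\{E_1,E_2,E_3\}$. The only mildly non-obvious step is recognising the coefficients of $p$ in this basis so that the translate $p + \operatorname{span}\{E_2,E_3\}$ can be compared with the $\operatorname{span}\{E_2,E_3\}$-components of $w^{x,a}$; everything else is a one-line computation.
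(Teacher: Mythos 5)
Your verification is correct and is exactly the routine computation the paper leaves implicit (the lemma is stated without proof, relying on the explicit formula \eqref{wsol} and the orthonormality of $\{E^{x,a}_1,E^{x,a}_2,E^{x,a}_3\}$). The key bookkeeping step $p=\frac{a}{b}E^{x,a}_1+\frac{r}{b}E^{x,a}_2$ and the frame computations for $\langle w^{x,a},E^{x,a}_i\rangle$ all check out.
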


\begin{proposition}\label{circles} Let $(K,Y)$ be an oriented circle in $\Bbb S^2$ and let $(p,\xi)\in M_r$ satisfy $p\notin K$. Then there exists $z\in K$ and an oriented circle $(T,B)$ in $\Bbb S^2$ such that $z,p\in T$, $B_z=Y_z$ and $B_p=\xi$.
\end{proposition}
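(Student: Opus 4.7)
The plan is to reduce the existence statement to a winding number computation. For each $z \in K$, I would first observe that $p$ cannot lie on the line $z + \Bbb R Y_z$: indeed $|z + t Y_z|^2 = 1 + t^2 r^2$, so the only candidate is $t = 0$, which is excluded by $p \neq z$ (as $p \notin K$). Hence $p$, $z$ and $z + Y_z$ span a unique affine plane $\Pi(z) \subset \Bbb R^3$, and $T(z) := \Pi(z) \cap \Bbb S^2$ is a non-degenerate circle through $p$ and $z$, since a plane tangent to $\Bbb S^2$ cannot meet the sphere at two distinct points. After orienting $T(z)$ so that its length-$r$ tangent at $z$ equals $Y_z$, I denote the resulting tangent vector at $p$ by $\Psi(z) \in S_r := \{\eta \in T_p\Bbb S^2 : |\eta| = r\}$. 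This defines a continuous map $\Psi : K \to S_r$ between two topological circles, and the statement becomes: $\xi \in \Psi(K)$; for this it is enough to show that $\Psi$ has nonzero topological degree.

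To compute the degree I would exploit the conformal invariance of the problem. Stereographically project $\Bbb S^2$ from a pole so that $p$ is sent to $0 \in \Bbb C$ (composing with a preliminary M\"obius transformation if the natural pole lies in $K$); this is a conformal diffeomorphism onto $\Bbb C$ that maps circles on $\Bbb S^2$ to circles or lines in $\hat{\Bbb C}$. The image $K' \subset \Bbb C$ of $K$ is a circle not passing through $0$, and $Y$ induces a continuous field of unit tangent directions $v'(z')$ on $K'$. A short complex-analytic computation — determine the center of the unique circle through $0$ and $z'$ tangent to $v'(z')$ at $z'$, then read off the tangent direction at $0$ — yields the key formula
$$
\Psi'(z') \;=\; \frac{(z')^2\,\overline{v'(z')}}{|z'|^2}.
$$
Writing $z' = |z'|\,e^{i\phi(z')}$ and $v'(z') = e^{i\psi(z')}$, this equals $e^{i(2\phi - \psi)}$.

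As $z'$ traverses $K'$ once, $\phi$ winds by $\pm 2\pi$ or by $0$ according to whether $0$ lies inside or outside $K'$, whereas $\psi$ winds by $\pm 2\pi$ since the unit tangent to any simple closed curve in $\Bbb C$ rotates by one full turn. A case check shows that $2\phi - \psi$ winds by $\pm 2\pi$ in each situation, so $\Psi$ has degree $\pm 1$ and is therefore surjective; any $z \in \Psi^{-1}(\xi)$ together with $(T,B) = (T(z), B(z))$ then provides the desired oriented circle. The main technical point requiring genuine care is the derivation of the tangent-at-$0$ formula with the correct orientation convention, so that the sign matches the prescribed condition $B(z)_z = Y_z$ and not $-Y_z$; once that is in hand the winding number count is essentially bookkeeping.
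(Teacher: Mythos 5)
Your proof is correct, but the surjectivity step takes a genuinely different route from the paper's. The construction itself is the same: for each $z\in K$ the paper forms $Q_z=\operatorname{span}\{p-z,Y_z\}$, the circle $T_z=(p+Q_z)\cap\Bbb S^2$ and the map $L:z\mapsto B_z(p)$, which is exactly your $\Psi$. The paper then proves surjectivity by showing that $L$ is \emph{locally injective} --- a direct computation in $\Bbb R^3$ shows that $Q_{z_1}\cap Q_{z_2}\cap\{p\}^\perp$ can be nontrivial only if the angular separation $\Delta$ of $z_1,z_2$ on $K$ satisfies \eqref{rum}, which bounds $|\Delta|$ away from $0$ --- and then invokes the elementary fact (Proposition \ref{surjection}) that a continuous locally injective self-map of a circle is onto. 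You instead compute the degree of $\Psi$ via stereographic projection. Your key formula checks out: writing the circle through $0$ and $z'$ with centre $c$ and prescribed oriented unit tangent $v'$ at $z'$, one gets $c=z'+i|c|v'$ with $|c|=|z'|^2/(2\operatorname{Im}(\overline{z'}v'))$, and the oriented unit tangent at the origin is $-ic/|c|=(z')^2\overline{v'}/|z'|^2$; note this expression also degenerates continuously to $\pm z'/|z'|$ when the image of $T(z)$ is a line through $0$, a case you cannot always avoid by moving the pole but do not need to, precisely because of this continuity. The winding count of $2\phi-\psi$ then gives degree $\pm1$ whether $0$ lies inside or outside $K'$, hence surjectivity. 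The trade-off: the paper's argument stays within elementary linear algebra but its local-injectivity computation is delicate; yours imports conformal geometry and degree theory but the bookkeeping is transparent, and degree $\pm1$ moreover recovers the (essentially unique) preimage, i.e.\ the injectivity of $L$ that the paper obtains at the very end only as a by-product of \eqref{rum}.
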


\begin{proof}
Denote by $Q_z$ the vector space generated by $\{p-z,Y_z\}$ for $z\in K$. Since $p-z$ and $Y_z$ are linearly independent, $Q_z$ is two-dimensional. Now $T_z=(p+Q_z)\cap\Bbb S^2$ is a non-degenerate circle in $\Bbb S^2$ as it contains two distinct points $p,z\in\Bbb S^2$. Fixing $z\in K$, we are going to show that there exists a vector field $B$ of length $r$ on $T_z$ such that $B_z=Y_z$. For, if we define 
$$
R_z=r^2(p-z)-\langle p-z,Y_z\rangle Y_z,\qquad z\in K
$$
then $R_z\ne 0$ by linear independence of $\{p-z,Y_z\}$ and we can set $V_z=\frac{R_z}{|R_z|}$. So $\{V_z,\frac 1rY_z\}$ is an orthonormal basis in $Q_z$. Let $q_z$ be the orthogonal projection of $p$ onto $Q_z$ and define $p_z=p-q_z$, $\theta_z=\sqrt{1-|p_z|^2}$. So $T=(p_z+Q_z)\cap\Bbb S^2$. According to Remark \ref{somepropcirc}, 
$$
B_z(\tau)=\frac 1{\theta_z}\left[\langle\tau,Y_z\rangle V_z-\langle\tau,V_z\rangle Y_z\right],\qquad\tau\in T_z
$$
is a vector field of length $r$ on $T_z$. Since $z-p$ and $z-p_z$ belong to $Q_z$ and $p_z\perp Q_z$, we have $z=p_z+\langle z,V_z\rangle V_z+\frac 1{r^2}\langle z,Y_z\rangle Y_z=p_z+\langle z,V_z\rangle V_z$ as $z\perp Y_z$, hence
$$
1=|z|^2=|p_z|^2+\langle z,V_z\rangle^2,\qquad\theta_z=|\langle z,V_z\rangle|.
$$
But
$$
|R_z|\langle z,V_z\rangle=\langle z,R_z\rangle=r^2\langle z,p-z\rangle=r^2(\langle z,p\rangle-1)\le 0
$$
so we conclude that $\theta_z=-\langle z,V_z\rangle$. From this we obtain that $B_z(z)=-\frac 1{\theta_z}\langle z,V_z\rangle Y_z=Y_z$. Eventually, $B_z(p)=\frac 1{\theta_z}\left[\langle p,Y_z\rangle V_z-\langle p,V_z\rangle Y_z\right]$. It remains to prove that the mapping
$$
L:K\to\{\zeta\in T_p\Bbb S^2:|\zeta|=r\}:z\mapsto B_z(p)
$$
is a surjection. Since $K$ and $\{\zeta\in T_p\Bbb S^2:|\zeta|=r\}$ are homeomorphic with $\Bbb S^1$ and $L$ is continuous, it is sufficient to prove that $L$ is locally injective by Proposition \ref{surjection}. Here we can easily see that $L_z$ spans the one-dimensional vector space $Q_z\cap\{p\}^\perp$. 

So let us study injectivity of $L$. Let $K=(v+U)\cap\Bbb S^2$ where $U$ is a two-dimensional subspace in $\Bbb R^3$, $v\perp U$ and $|v|<1$. Let $z_1\in K$. Then there exists an orthonormal basis $u_1,u_2$ in $U$ such that $z_1=v+\xi u_1$ where $1=|v|^2+\xi^2$ and $Y(z_1)=ru_2$. If $z_2\in K$ satisfies $z_1\ne z_2$ then there exists a unique $\Delta\in(-\pi,\pi]\setminus\{0\}$  such that 
$$
z_2=v+\xi u_1\cos\Delta+\xi u_2\sin\Delta
$$
and, from this, 
$$
Y(z_2)=r[-u_1\sin\Delta+u_2\cos\Delta].
$$
Then $Q_{z_1}\cap Q_{z_2}$ is a one-dimensional space spanned by 
$$
A=(z_1-p)\sin\Delta+\frac{\xi}{r}(1-\cos\Delta)Y(z_1)=(z_2-p)\sin\Delta-\frac{\xi}{r}(1-\cos\Delta)Y(z_2).
$$
Obviously, the vector $A$ belongs also to $\{p\}^\perp$ iff
\begin{equation}\label{rum}
\psi(\Delta):=\frac{\sin\Delta}{1-\cos\Delta}=\frac{\xi\langle p,u_2\rangle}{1-\langle p,z_1\rangle}.
\end{equation}
Now $\psi:(-\pi,\pi]\setminus\{0\}\to\Bbb R$ is a bijection and the right hand side of \eqref{rum} is bounded by a constant $C_{p,K}$ irrespective of $z_1$, $z_2$, $u_1$ or $u_2$, as $p\notin K$. So $\Delta$ satisfying the identity \eqref{rum} must verify to $|\Delta|\ge\varepsilon_{p,K}>0$ and, consequently, $|z_1-z_2|\ge\varepsilon^\prime_{p,K}>0$. In particular, $L$ is locally injective and, consequently, $L$ is surjective. The identity \eqref{rum} then also implies that 
$$
\{z\in K\setminus\{z_1\}:L(z)\in\{-L(z_1),L(z_1)\}\}=\{z\in K\setminus\{z_1\}:\operatorname{dim}\,Q_{z_1}\cap Q_{z}\cap\{p\}^\perp=1\}
$$
contains exactly one element $z_2$, which, by surjectivity of $L$, must satisfy $L(z_1)=-L(z_2)$. In particular, $L$ is injective.
\end{proof}

\subsection{Proof of Theorem \ref{supi}}

Let $(p_1,\xi_1),(p_3,\xi_3)\in M_r$. We are going to show that, choosing a suitable piece-wise constant control  $a$ in the equation \eqref{adithree}, we can reach $(p_3,\xi_3)$ from $(p_1,\xi_1)$ by the solution \eqref{adithree} with this control $a$ in any time $T\ge 2\pi/r$. We are going to proceed in steps.

First let $a_1=0$ and move $(p_1,\xi_1)$ along the solution of \eqref{adithree} with the constant control $a_1$ to some $(p_2,\xi_2)$ in a very short time just to arrange $p_2\ne p_3$.

Next let $a_2$ be an extremely large constant control so that the orbit $K_2$ of the solution $w^{(p_2,\xi_2),a_2}$ does not contain $p_3$. This can be done by choosing a large control $a$ as the diameter of the orbit is $2r/\sqrt{r^2+a^2}$ by \eqref{wsol}. This solution defines an oriented circle $(K_2,Y_2)$ in $\Bbb S^2$ and $p_3\notin K_2$. Hence, by Proposition \ref{circles}, there exists an oriented circle $(K_3,Y_3)$ in $\Bbb S^2$ such that $z\in K_2\cap K_3$, $p_3\in K_3$, $Y_2(z)=Y_3(z)$ and $Y_3(p_3)=\xi_3$. This circle $K_3$ is associated to a control $a_3\in\Bbb R$.

Let $a$ be the piece-wise constant control with steps $a_1$, $a_2$ and $a_3$ at times $\tau_1$, $\tau_2$ and $\tau_3$ so that the solution $X$ to \eqref{adithree} with this control satisfies $X(0)=(p_1,\xi_1)$, $X(\tau_1)=(p_2,\xi_2)$, $X(\tau_2)=(z,Y_2(z))=(z,Y_3(z))$ and $X(\tau_3)=(p_3,\xi_3)$. Now $\tau_1$ was as small as we wanted, $\tau_2-\tau_1$ too because $a_2$ was large and the periodicity of the solutions to \eqref{adithree} with a constant control $a$ is $2\pi/\sqrt{r^2+a^2}$ by \eqref{wsol}. Hence $\tau_3-\tau_2$ is not larger that $2\pi/r$ since we do not let the solution run the full period. Altogether, $\tau_3<T$.

Let $a_4\in\Bbb R$ be a control such that $T-\tau_3\in\{2\pi k/\sqrt{r^2+a_4^2}:k\ge 0\}$ and let $a=a_4$ on $(\tau_3,T]$. Then $X(T)=X(\tau_3)=(p_3,\xi_3)$. In other words, we let the solution revolve to wait for the time $T$, to wind up in the point of the departure $(p_3,\xi_3)$.

\section{Exponential ergodicity in $M_r$, $r>0$}

In this section, again, we consider the Markov semigroup $(P_t)$ and its adjoint semigroup $(P^*_t)$ restricted to $M_r$ as in Convention \ref{restrmr}, with $r>0$ fixed. We are going to prove the exponential convergence to the invariant measure $\lambda_r$ in total variation via the Doeblin theorem and a minorization condition due to \cite{meytwe} and \cite{masthi}.

\begin{lemma}\label{bdensity} The transition densities satisfy $p>0$ on $(2\pi/r,\infty)\times M_r\times M_r$.
\end{lemma}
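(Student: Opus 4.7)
The plan is to combine three ingredients that are already at our disposal: the smoothness of the density $p\in C^\infty((0,\infty)\times M_r\times M_r)$ from Theorem \ref{smoothdens}, the full support $\operatorname{supp}p_{t,x}=M_r$ for $t\ge 2\pi/r$ from Theorem \ref{supi}, and the invariance of $\lambda_r$ from Theorem \ref{prop-horem}. The glue will be the Chapman--Kolmogorov equation for the densities, obtained by rewriting the semigroup identity $P_{t_0}=P_{t_0-\sigma}\circ P_\sigma$ against an arbitrary continuous test function and applying Fubini.

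Fix $t_0>2\pi/r$ and $x_0,y_0\in M_r$. Since $t_0-2\pi/r>0$, I choose $\sigma\in(0,t_0-2\pi/r)$, which guarantees both $t_0-\sigma>2\pi/r$ and $\sigma>0$. Chapman--Kolmogorov then gives
\begin{equation*}
p(t_0,x_0,y_0)=\int_{M_r}p(t_0-\sigma,x_0,w)\,p(\sigma,w,y_0)\,d\lambda_r(w),
\end{equation*}
with a nonnegative continuous integrand. Strict positivity of the integral will follow as soon as I exhibit a single point $w_*\in M_r$ at which both factors are strictly positive.

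To this end I would introduce the two open subsets of $M_r$
\begin{equation*}
A=\{w\in M_r:p(t_0-\sigma,x_0,w)>0\},\qquad B=\{w\in M_r:p(\sigma,w,y_0)>0\}.
\end{equation*}
Because $p(t_0-\sigma,x_0,\cdot)$ is a continuous density, the support of the measure $p_{t_0-\sigma,x_0}$ equals $\overline{A}$; Theorem \ref{supi} forces this support to be all of $M_r$, so $A$ is dense in $M_r$. For $B$, the invariance $P_\sigma^*\lambda_r=\lambda_r$ unwinds, via Fubini, to the identity $\int_{M_r}p(\sigma,w,y_0)\,d\lambda_r(w)=1$, so $p(\sigma,\cdot,y_0)\not\equiv 0$ and $B$ is a nonempty open subset of $M_r$. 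A nonempty open set meets any dense set, hence $A\cap B\neq\emptyset$; on a neighbourhood of any $w_*\in A\cap B$ the integrand is strictly positive by continuity, and the integral is therefore positive.

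I do not foresee any real obstacle. The lower bound $t_0>2\pi/r$ is used precisely to leave room for a strictly positive intermediate time $\sigma$ while still keeping $t_0-\sigma\ge 2\pi/r$ so that Theorem \ref{supi} is applicable. The only conceptual subtlety worth stating carefully is the identification of $\operatorname{supp}(f\,d\lambda_r)$ with $\overline{\{f>0\}}$ for a continuous nonnegative density $f$ on a manifold, which is completely standard.
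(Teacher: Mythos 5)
Your argument is correct, and it reaches the conclusion by a genuinely different route than the paper at the crucial last step. Both proofs rest on Chapman--Kolmogorov together with the smoothness of $p$ (Theorem \ref{smoothdens}) and the full-support result (Theorem \ref{supi}), but they diverge in how they handle the second factor. The paper fixes a pair $(x_0,y_0)$ where $p(t_2,\cdot,\cdot)\ge\varepsilon$ on a product neighbourhood, deduces $p(t,x,y)>0$ for all $x$ and all $y$ near $y_0$, and then propagates this to \emph{every} $y$ by the rotational symmetry of the transition densities (Corollary \ref{rotations_dens}). You instead show that for each fixed target $y_0$ the set $B=\{w:p(\sigma,w,y_0)>0\}$ is a nonempty open set, using the invariance $P^*_\sigma\lambda_r=\lambda_r$, and intersect it with the dense set $A$ coming from the support theorem. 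Your route avoids the symmetry corollary altogether, so it would survive perturbations of the equation that destroy the $O(3)$-invariance; the paper's route avoids invoking invariance of $\lambda_r$ at this stage. One small point you should make explicit: Fubini applied to $P^*_\sigma\lambda_r=\lambda_r$ yields $\int_{M_r}p(\sigma,w,y)\,d\lambda_r(w)=1$ only for $\lambda_r$-almost every $y$, not a priori for your chosen $y_0$; since $p(\sigma,\cdot,\cdot)$ is continuous and bounded on the compact manifold $M_r\times M_r$, the function $y\mapsto\int_{M_r}p(\sigma,w,y)\,d\lambda_r(w)$ is continuous, and a continuous function equal to $1$ $\lambda_r$-a.e.\ is identically $1$ because $\lambda_r$ charges every nonempty open set. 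With that sentence added, the proof is complete.
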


\begin{proof}
We develop the idea of \cite[Section 5.2]{meytwe} and the proof of \cite[Lemma 2.3]{masthi}. According to Theorem \ref{smoothdens}, the transition densities $p(t,x,\cdot)$ are smooth in all three variables. Let $t_1>2\pi/r$ and $t_2>0$ satisfy $t=t_1+t_2$. Let also $x_0,y_0\in M_r$ be such that $p(t_2,\cdot,\cdot)\ge\varepsilon$ on a neighbourhood $O_{x_0}\times O_{y_0}$ for some $\varepsilon>0$. Then, form the Chapman-Kolmogorov identity
$$
p(t,x,y)=\int_{M_r}p(t_1,x,z)p(t_2,z,y)\,d\lambda_r(z)
\ge\varepsilon p(t_1,x,O_{x_0})>0,\qquad\forall x\in M_r,\,\,\forall y\in O_{y_0}
$$
since the support of $p_{t_1,x}$ is $M_r$ by Theorem \ref{supi}. Now if $p(t,x_1,y_1)=0$ for some $x_1,y_1\in M_r$, let $Q\in\Bbb R^3\otimes\Bbb R^3$ be one of the two unitary matrices for which $\widetilde Q=Q\otimes Q=\operatorname{diag}\,[Q,Q]$ satisfies $\widetilde Qy_1=y_0$. Then $0=p(t,x_1,y_1)=p(t,\widetilde Qx_1,y_0)$ by Corollary \ref{rotations_dens}, which is a contradiction.
\end{proof}

\begin{theorem}\label{experg} There exist positive constants $c_r,\alpha_r$ such that 
\begin{equation}\label{tvconv}
\|P^*_t\nu-\lambda_r\|\le c_re^{-\alpha_rt}\|\nu-\lambda_r\|,\qquad\forall t\ge 0
\end{equation}
holds for every probability measure $\nu$ on $\mathscr B(M_r)$, where $\|\cdot\|$ is the norm in total variation on $M_r$.
\end{theorem}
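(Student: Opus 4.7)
The plan is to verify a Doeblin-type minorization condition at a single fixed time and then invoke the classical one-step contraction argument in total variation; since $\lambda_r$ is already known to be the unique invariant measure on $M_r$ by Theorem~\ref{prop-horem}, exponential convergence to $\lambda_r$ follows automatically.

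First, I would fix any $t_0 > 2\pi/r$. The manifold $M_r$ is compact (it is closed and bounded in $\Bbb R^6$), the transition density $p(t_0,\cdot,\cdot)$ is jointly continuous on $M_r\times M_r$ by Theorem~\ref{smoothdens}, and it is strictly positive there by Lemma~\ref{bdensity}. Compactness then delivers a constant $\delta\in(0,1)$ such that
$$p(t_0,x,y)\ge\delta\qquad\forall(x,y)\in M_r\times M_r.$$
Integrating against $\lambda_r$ gives the Doeblin minorization
$$p_{t_0,x}(A)\ge\delta\,\lambda_r(A)\qquad\forall x\in M_r,\,\forall A\in\mathscr B(M_r),$$
in which the minorizing measure is $\lambda_r$ itself.

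Next I would carry out the standard one-step contraction. For two probability measures $\nu_1,\nu_2$ on $M_r$, the Hahn decomposition writes $\nu_1-\nu_2=\mu^+-\mu^-$ with $\mu^+(M_r)=\mu^-(M_r)=\tfrac12\|\nu_1-\nu_2\|$. The minorization implies $P^*_{t_0}\mu^{\pm}\ge\delta\mu^{\pm}(M_r)\lambda_r$, so subtracting this common positive minorant from both $P^*_{t_0}\mu^+$ and $P^*_{t_0}\mu^-$ yields
$$\|P^*_{t_0}\nu_1-P^*_{t_0}\nu_2\|\le(1-\delta)\|\nu_1-\nu_2\|.$$
Taking $\nu_2=\lambda_r$ and iterating gives $\|P^*_{nt_0}\nu-\lambda_r\|\le(1-\delta)^n\|\nu-\lambda_r\|$ for every $n\in\Bbb N$. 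For general $t\ge 0$, writing $t=nt_0+s$ with $s\in[0,t_0)$ and using that $P^*_s$ is a (non-strict) contraction on the hyperplane of zero-mass signed measures produces \eqref{tvconv} with $\alpha_r=-t_0^{-1}\log(1-\delta)$ and $c_r=(1-\delta)^{-1}$, exactly as in the Meyn--Tweedie/Ma\v{s}\'{\i}\v{c}ek--Thieullen schemes of \cite{meytwe}, \cite{masthi}.

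The only non-routine step is producing the uniform lower bound $\delta$ on the density; this is precisely where the work of the previous two sections pays off, as it rests on the joint smoothness of $p$ (Theorem~\ref{smoothdens}), its strict positivity for times beyond $2\pi/r$ (Lemma~\ref{bdensity}), and compactness of $M_r$. Once this bound is secured, the contraction step is textbook.
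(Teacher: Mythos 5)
Your proposal is correct and follows essentially the same route as the paper: a Doeblin minorization $p_{t_0,x}\ge\delta\lambda_r$ at a fixed time beyond $2\pi/r$ (obtained from the positivity in Lemma \ref{bdensity}, the smoothness in Theorem \ref{smoothdens} and compactness of $M_r$), combined with the identification of the unique invariant measure as $\lambda_r$ via Theorem \ref{prop-horem}. The only difference is cosmetic — the paper simply cites the Doeblin theorem for the contraction step, whereas you spell out the Hahn-decomposition argument and the passage from discrete to continuous time.
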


\begin{proof}
Set $\tau=4\pi/r$. According to Lemma  \ref{bdensity}, there exists $\varepsilon>0$ such that $p_{\tau,x}(A)\ge\varepsilon\lambda_r(A)$ holds for every $x\in M_r$ and every $A\in\mathscr B(M_r)$. Hence, by the Doeblin theorem\footnote{See e.g. \cite[Theorem 4]{diafre} for a particularly simple proof of the Doeblin theorem.}, $(P^*_t)$ has a unique invariant probability measure $\mu$ on $M_r$ and there exist positive constants $c_r$ and $\alpha_r$ such that
$$
\|P^*_t\nu-\mu\|\le c_re^{-\alpha_rt}\|\nu-\mu\|,\qquad\forall t\ge 0
$$
holds for every probability measure $\nu$ on $\mathscr B(M_r)$. But $\lambda_r$ is the unique invariant probability measure on $M_r$ by Theorem \ref{prop-horem}.
\end{proof}

\section{Invariant measures and attractivity on $T\Bbb S^2$}

In this last section, we are going to study the global dynamics on the full target space $T\Bbb S^2$. We will identify the set of invariant probability measures on $\mathscr B(T\Bbb S^2)$, the set of ergodic probability measures on $\mathscr B(T\Bbb S^2)$ and it will be shown that the dual Markov semigroup is always attractive.

\begin{definition} Extend $\lambda_r$ from $\mathscr B(M_r)$ to $\mathscr B(T\Bbb S^2)$, in the unique way to obtain a probability measure on $\mathscr B(T\Bbb S^2)$, i.e. $A\mapsto\lambda_r(A\cap M_r)$. Let us denote this extension still by $\lambda_r$.
\end{definition}

\begin{definition} If $\nu$ is a probability measure on $T\mathbb{S}^2$, we define the probability measures
$$
\nu_\ast(U)=\nu\,\{(p,\xi)\in T\mathbb{S}^2:|\xi|\in U\},\qquad U\in\mathscr B([0,\infty))
$$
$$
\bar\nu(A)=\nu\,(A\cap M_0)+\int_{(0,\infty)}\lambda_r(A\cap M_r)\,d\nu_\ast,\qquad A\in\mathscr B(T\mathbb{S}^2)
$$
in the notation of \eqref{submanem}.
\end{definition}

\begin{remark} One can check by the definition of $\lambda_r$ that the mapping $r\mapsto\lambda_r(A\cap M_r)$ is Borel measurable on $(0,\infty)$ for every $A\in\mathscr B(T\mathbb{S}^2)$ by the Fubini theorem.
\end{remark}

\begin{theorem}
Let $z$ be a solution of \eqref{gsde} on $T\mathbb{S}^2$ with an initial distribution $\nu$ on $\mathscr B(T\mathbb{S}^2)$. Then the laws of $z(t)$ converge in total variation on $T\mathbb{S}^2$ to $\bar\nu$ as $t\to\infty$. Moreover, $\nu$ is invariant for \eqref{gsde} iff $\nu=\bar\nu$ and $\{\delta_x,\lambda_r:x\in M_0,\,r>0\}$ is the set of ergodic probability measures for \eqref{gsde}.
\end{theorem}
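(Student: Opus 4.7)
The plan is to exploit the foliation $T\mathbb S^2=M_0\cup\bigcup_{r>0}M_r$ into invariant leaves (Proposition~\ref{gext}) and to disintegrate $\nu$ along the continuous map $(p,\xi)\mapsto|\xi|$ from $T\mathbb S^2$ onto $[0,\infty)$. Since $T\mathbb S^2$ is Polish, this yields a representation $\nu=\int_{[0,\infty)}\nu_r\,d\nu_*(r)$ with $\nu_r$ a probability supported on $M_r$. Because $|v(t)|$ is pathwise constant, the Markov semigroup preserves this decomposition, so $(P^*_t\nu)_*=\nu_*$ and
$$P^*_t\nu=\int_{[0,\infty)}P^*_t\nu_r\,d\nu_*(r),$$
with each $P^*_t\nu_r$ still supported on $M_r$.

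On the slice $M_0$ every point is fixed, so $P^*_t\nu_0=\nu_0$. On each slice $M_r$ with $r>0$, Theorem~\ref{experg} gives
$$\|P^*_t\nu_r-\lambda_r\|\le c_re^{-\alpha_rt}\|\nu_r-\lambda_r\|\longrightarrow 0.$$
Since the signed measure $\nu-\bar\nu=\int_{(0,\infty)}(\nu_r-\lambda_r)\,d\nu_*(r)$ is annihilated by the $M_0$-part and $P^*_t(\nu-\bar\nu)=\int_{(0,\infty)}P^*_t(\nu_r-\lambda_r)\,d\nu_*(r)$, I obtain
$$\|P^*_t\nu-\bar\nu\|\le\int_{(0,\infty)}\|P^*_t\nu_r-\lambda_r\|\,d\nu_*(r).$$
The integrand is bounded by $2$ and tends pointwise to $0$, so dominated convergence delivers the claimed total variation convergence.

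The invariance characterization is then immediate: $\bar\nu$ is invariant, since the $M_0$-part is fixed pointwise and each $\lambda_r$ is invariant on $M_r$ by Theorem~\ref{prop-horem}; and if $\nu$ is invariant, then $\nu=P^*_t\nu\to\bar\nu$, hence $\nu=\bar\nu$. For the ergodic classification, each $\delta_x$, $x\in M_0$, is trivially extreme, and each $\lambda_r$, $r>0$, is extreme because any convex decomposition $\lambda_r=\alpha\mu_1+(1-\alpha)\mu_2$ into invariant probabilities forces $\mu_1,\mu_2$ to be supported on $M_r$ and hence, by Theorem~\ref{prop-horem}, to coincide with $\lambda_r$. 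Conversely, any invariant $\nu=\bar\nu$ with $\nu_*$ charging both $\{0\}$ and $(0,\infty)$ splits non-trivially into its restrictions to $M_0$ and to $\bigcup_{r>0}M_r$ (both invariant); and if $\nu_*$ is supported in $(0,\infty)$ but is not a Dirac, splitting $\nu_*=a\mu_1+(1-a)\mu_2$ induces a non-trivial convex decomposition of $\nu$ into invariants. Hence extremality forces either $\nu_*=\delta_0$ (so $\nu$ is a probability on $M_0$, further forced to be a Dirac by extremality within probabilities on $M_0$) or $\nu_*=\delta_r$ with $\nu=\lambda_r$.

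The only subtle point is the measurability of $r\mapsto\|P^*_t\nu_r-\lambda_r\|$ together with the bound $\|\int\mu_r\,d\nu_*(r)\|\le\int\|\mu_r\|\,d\nu_*(r)$ used above; both are routine via the disintegration theorem on the Polish space $T\mathbb S^2$ and the duality $\|\mu\|=\sup_{\|f\|_\infty\le 1}|\int f\,d\mu|$. The main conceptual obstacle is that the constants $c_r,\alpha_r$ in Theorem~\ref{experg} depend on $r$ and are expected to degenerate as $r\downarrow 0$ or $r\to\infty$, so no uniform rate is available on $T\mathbb S^2$; this is exactly why convergence must be extracted from dominated convergence rather than from a single quantitative estimate, and it is also why the global convergence statement is only qualitative, in contrast to the exponential rate of Theorem~\ref{experg}.
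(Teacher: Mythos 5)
Your proposal is correct and follows essentially the same route as the paper: disintegration of $\nu$ along $(p,\xi)\mapsto|\xi|$ into conditional measures supported on the invariant leaves $M_r$, slice-wise convergence from Theorem \ref{experg} combined with a dominated-convergence passage to the limit in total variation, and the identification of ergodic measures via extremality of the corresponding measure on $M_0\dot\cup(0,\infty)$. Your explicit dominated-convergence step simply spells out what the paper compresses into the phrase ``by a contradiction argument,'' and your remark on the non-uniformity of $c_r,\alpha_r$ correctly explains why the global statement is only qualitative.
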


\begin{proof} Let $F:[0,\infty)\times\mathscr B(T\Bbb S^2)\to[0,1]$ be a regular version of a conditional probability measure $\nu(\cdot||\xi|=r)$ on $\mathscr B(T\Bbb S^2)$ for $r\ge 0$, i.e. $F(r,\cdot)$ is a probability measure on $\mathscr B(T\Bbb S^2)$ for every $r\ge 0$, $F(\cdot,A)$ is Borel measurable on $[0,\infty)$ for every $A\in\mathscr B(T\Bbb S^2)$ and 
\begin{equation}\label{disint}
\nu(A\cap\{(p,\xi):|\xi|\in U\})=\int_U F(r,A)\,d\nu_*(r)
\end{equation}
holds for every $A\in\mathscr B(T\Bbb S^2)$ and $U\in\mathscr B[0,\infty)$. The equality \eqref{disint} implies that
\begin{equation}\label{disint2}
\int_{T\Bbb S^2}h(|\xi|,p,\xi)\,d\nu(p,\xi)=\int_{[0,\infty)}\left(\int_{T\Bbb S^2}h(r,y)\,dF_r(y)\right)\,d\nu_*(r)
\end{equation}
holds for every bounded measurable $h:[0,\infty)\times T\Bbb S^2\to\Bbb R$. In particular, setting $h(r,p,\xi)=\mathbf 1_{[r=|\xi|]}$, we obtain that $\nu_*(O)=1$ where $O=\{r\ge 0:F(r,M_r)=1\}$. So \eqref{disint2} implies that
\begin{eqnarray*}
(P^*_t\nu)(A)&=&\int_{T\Bbb S^2}p(t,x,A)\,d\nu=\int_O\left(\int_{M_r}p(t,x,A)\,dF_r(x)\right)\,d\nu_*(r)
=\int_O(P^*_tF_r)(A\cap M_r)\,d\nu_*(r)
\\
&=&\nu(A\cap M_0)+\int_{O\cap(0,\infty)}(P^*_tF_r)(A\cap M_r)\,d\nu_*(r)
\end{eqnarray*}
holds for every $t\ge 0$ and $A\in\mathscr B(T\Bbb S^2)$. By a contradiction argument, we get that $P^*_t\nu$ converge in total variation on $T\mathbb{S}^2$ to $\bar\nu$, by Theorem \ref{experg}.

To prove the invariance part of the claim, realize that
$$
\int_{T\Bbb S^2}h\,d\bar\nu=\int_{M_0}h\,d\nu+\int_{(0,\infty)}\left(\int_{M_r}h\,d\lambda_r\right)\,d\nu_*
$$
holds for every bounded measurable $h:T\Bbb S^2\to\Bbb R$ by the definition of the measure $\bar\nu$. Hence, setting $h(x)=p(t,x,A)$, we get that
$$
(P^*_t\bar\nu)(A)=\nu(A\cap M_0)+\int_{(0,\infty)}\lambda_r(A\cap M_r)\,d\nu_*=\bar\nu(A)
$$
holds for every $A\in\mathscr B(T\Bbb S^2)$ by Theorem \ref{prop-horem}. In particular, $\bar\nu$ is invariant. If $\nu$ is invariant then $\nu=\lim_{t\to\infty}P^*_t\nu=\bar\nu$ by the first part of the proof.

Concerning the ergodic measures (according to Definition \ref{basicdef}), the probability measures $\{\delta_x,\lambda_r:x\in M_0,\,r>0\}$ are invariant by the second part of the proof and ergodicity follows from Remark \ref{erg} as ergodic probability measures are the extremal points of the set of all invariant probability measures (see e.g. Proposition 3.2.7 in \cite{daza}). Indeed, the probability measure $\nu_a$ is ergodic for \eqref{gsde} iff $a$ is an extremal point in the convex set of probability measures on $\mathscr B(M_0\dot\cup(0,\infty))$. This occurs iff $a$ is a Dirac measure, i.e. either $a=\delta_x$ for some $x\in M_0$ (hence $\nu_a=\delta_x$) or $a=\delta_r$ for some $r>0$ (hence $\nu_a=\lambda_r$).
\end{proof}

\begin{remark}\label{erg} Invariant measures for \eqref{gsde} can be uniquely described as measures
$$
\nu_a(A)=a\,(A\cap M_0)+\int_{(0,\infty)}\lambda_r(A\cap M_r)\,da,\qquad A\in\mathscr B(T\mathbb{S}^2)
$$
where $a$ is a Borel probability measure on the Polish space\footnote{Topological spaces that can be metrized by a complete separable metric are called Polish spaces.} $X=M_0\dot\cup(0,\infty)$, i.e. $G\subseteq X$ is open iff $G\cap M_0$ is open in $M_0$ and $G\cap(0,\infty)$ is open in $(0,\infty)$. $X$ is Polish as so are $M_0$ and $(0,\infty)$. The assignment $a\mapsto\nu_a$ is a bijection onto the set of invariant probability measures.
\end{remark}

\appendix

\section{Lie algebra}

Let $U$ be an open set on a $C^\infty$-manifold.

\begin{itemize}
\item The set $\mathscr L$ of all smooth tangent vector fields on $U$  is  a vector space with the Jacobi bracket.  Any  vector subspace of $\mathscr L$ closed under the Jacobi bracket is called \textit{ a Lie algebra}.
\item If $\mathcal X$ is a set of smooth tangent vector fields on $U$, then we denote by $\mathscr L(\mathcal X)$ the smallest Lie algebra containing $\mathcal X$.
\item If $\mathcal A\subseteq\mathscr L$ and $p\in U$, then we define $\mathcal A(p)=\{A_p:A\in\mathcal A\}$.
\end{itemize}

\begin{proposition} Define $L_0=\operatorname{span}\{\mathcal X\}$ and $L_n=\operatorname{span}\{L_{n-1}\cup\{[A,B]:A,B\in L_{n-1}\}\}$. Then $\bigcup L_n=\mathscr L(\mathcal X)$.
\end{proposition}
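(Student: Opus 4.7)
The plan is to establish the equality by proving both inclusions separately. Set $L=\bigcup_{n\ge 0}L_n$ for brevity.

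For the inclusion $L\subseteq\mathscr L(\mathcal X)$, I would argue by induction on $n$. The base case $L_0=\operatorname{span}\{\mathcal X\}\subseteq\mathscr L(\mathcal X)$ is immediate because $\mathscr L(\mathcal X)$ is by definition a Lie algebra, hence a vector subspace of $\mathscr L$, and it contains $\mathcal X$. For the induction step, if $L_{n-1}\subseteq\mathscr L(\mathcal X)$, then every bracket $[A,B]$ of elements $A,B\in L_{n-1}$ lies in $\mathscr L(\mathcal X)$ by closedness under the Jacobi bracket, and so does the span of $L_{n-1}$ together with such brackets; thus $L_n\subseteq\mathscr L(\mathcal X)$. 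Passing to the union gives $L\subseteq\mathscr L(\mathcal X)$.

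For the reverse inclusion, it suffices to verify that $L$ is itself a Lie algebra containing $\mathcal X$; then minimality of $\mathscr L(\mathcal X)$ forces $\mathscr L(\mathcal X)\subseteq L$. The inclusion $\mathcal X\subseteq L_0\subseteq L$ is trivial. Observe that $L_{n-1}\subseteq L_n$ directly from the definition of $L_n$ as a span of a set containing $L_{n-1}$, so the sequence $(L_n)$ is increasing. Given $A,B\in L$ and scalars $\alpha,\beta$, pick $n,m$ with $A\in L_n$ and $B\in L_m$; taking $k=\max\{n,m\}$ we have $A,B\in L_k$, hence $\alpha A+\beta B\in L_k\subseteq L$, showing $L$ is a vector subspace of $\mathscr L$. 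Moreover $[A,B]\in L_{k+1}\subseteq L$ by the very definition of $L_{k+1}$, so $L$ is closed under the Jacobi bracket.

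Combining the two inclusions yields $L=\mathscr L(\mathcal X)$. The argument is entirely formal and I do not anticipate a genuine obstacle; the only point requiring a moment of care is the observation that the sequence $(L_n)$ is nested, which is what allows one to combine two elements lying in different $L_n$'s into a single $L_k$ before performing linear combinations or brackets.
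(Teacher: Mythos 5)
Your argument is correct. The paper states this proposition without proof, and your double-inclusion argument (induction for $\bigcup L_n\subseteq\mathscr L(\mathcal X)$, and verifying that $\bigcup L_n$ is itself a Lie algebra containing $\mathcal X$ for the reverse, using the nestedness of the $L_n$ to place any two elements in a common $L_k$) is exactly the standard reasoning the authors leave implicit.
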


\begin{proposition}\label{dimeq} Let $X_1,\dots,X_m,Y\in\mathscr L$ and let $f_i\in C^\infty(U)$. Then
$$
\mathscr L(X_1,\dots,X_m,Y)(p)=\mathscr L(X_1,\dots,X_m,Y+\sum_{j=1}^mf_jX_j)(p),\qquad p\in U.
$$
\end{proposition}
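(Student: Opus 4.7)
\medskip

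My plan is to exploit the Leibniz-type identity $[W,gA] = g[W,A] + W(g)A$ to enlarge $\mathscr L(X_1,\dots,X_m,Y)$ to a $C^\infty(U)$-module that is still a Lie algebra and whose pointwise evaluation is the same as the original one. Concretely, set $\tilde{\mathscr L} = \mathscr L(X_1,\dots,X_m,Y)$ and let $\hat M$ be the $C^\infty(U)$-submodule of $\mathscr L$ generated by $\tilde{\mathscr L}$, i.e.\ the set of finite sums $\sum_i g_i B_i$ with $g_i\in C^\infty(U)$ and $B_i\in\tilde{\mathscr L}$.

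First I would check that $\hat M$ is closed under the Jacobi bracket. For $A_1 = \sum_i g_i B_i$ and $A_2 = \sum_j h_j C_j$ in $\hat M$, the computation
$$[A_1,A_2] = \sum_{i,j}\bigl(g_ih_j[B_i,C_j] + g_i B_i(h_j)\,C_j - h_j C_j(g_i)\,B_i\bigr)$$
shows that $[A_1,A_2]$ is again a $C^\infty(U)$-linear combination of the $B_i, C_j, [B_i,C_j]\in\tilde{\mathscr L}$, hence lies in $\hat M$. Next, since $Z := Y + \sum_j f_j X_j$ manifestly lies in $\hat M$, and $\hat M$ is a real vector subspace of $\mathscr L$ closed under $[\cdot,\cdot]$ and containing $X_1,\dots,X_m,Z$, minimality of the generated Lie algebra gives
$$\mathscr L(X_1,\dots,X_m,Z)\subseteq\hat M.$$

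The key point is that pointwise evaluation collapses the module structure back to $\tilde{\mathscr L}(p)$: for any $W = \sum_i g_i B_i \in \hat M$ one has $W(p) = \sum_i g_i(p) B_i(p)\in\tilde{\mathscr L}(p)$ since $\tilde{\mathscr L}(p)$ is an $\mathbb R$-linear subspace of $T_p U$. Hence $\hat M(p) = \tilde{\mathscr L}(p)$ and, combining with the inclusion above,
$$\mathscr L(X_1,\dots,X_m,Y+\textstyle\sum_j f_j X_j)(p)\subseteq\mathscr L(X_1,\dots,X_m,Y)(p).$$
The reverse inclusion follows by symmetry, substituting $f_j \mapsto -f_j$ and noting that $Y = (Y+\sum_j f_j X_j) - \sum_j f_j X_j$.

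I don't foresee a serious obstacle here; the only mild subtlety is to remember that Lie algebras are $\mathbb R$-vector spaces (not $C^\infty(U)$-modules), so it is essential to introduce $\hat M$ explicitly rather than trying to argue by induction on the bracket depth of $\mathscr L(X_1,\dots,X_m,Z)$, where an inductive hypothesis at the level of $W(p)\in\tilde{\mathscr L}(p)$ would not suffice to control the derivatives that appear when forming $[A,B](p)$.
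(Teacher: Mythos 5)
Your proposal is correct and follows essentially the same route as the paper: the paper's proof introduces exactly your module $\hat M$ (there called $\mathscr C^i$, built symmetrically for both generating sets), observes it is a Lie algebra containing the other generating set, and collapses it pointwise via $\mathscr L(\mathcal A^i)(p)\subseteq\mathscr C^j(p)\subseteq\mathscr L(\mathcal A^j)(p)$. Your write-up merely makes explicit the bracket computation that the paper dismisses with ``apparently'' and handles the two inclusions by symmetry rather than simultaneously.
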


\begin{proof} Let us write $\mathcal A^1=\{X_1,\dots,X_m,Y\}$, $\mathcal A^2=\{X_1,\dots,X_m,Y+\sum_{j=1}^mf_jX_j\}$,
$$
\mathscr C^i=\left\{\sum_{k=1}^Kh_kL_k:h_k\in C^\infty(U),L_k\in\mathscr L(\mathcal A^i), K\in\mathbb{N}\right\}.
$$
Apparently, $\mathscr C^i$ is a Lie algebra for $i\in\{1,2\}$, $\mathcal A^i\subseteq\mathscr C^j$ whenever $\{i,j\}=\{1,2\}$ hence $\mathscr L(\mathcal A^i)\subseteq\mathscr C^j$ whenever $\{i,j\}=\{1,2\}$. But then
$$
\mathscr L(\mathcal A^i)(p)\subseteq\mathscr C^j(p)\subseteq\mathscr L(\mathcal A^j)(p).
$$
\end{proof}

\begin{theorem}[H\"ormander]\label{hormanthm}
Let $M$ be a Riemannian manifold with a countable topological basis, let $X^1,\dots,X^m,Y$ be smooth vector fields on $M$, let $Z$ be a smooth funciton on $M$ and let $\mu$ be a Radon measure on $\mathscr B(M)$ such that
\begin{equation}\label{hormmeasid}
\int_M\left\{Zh+Y(h)+\sum_{i=1}^mX^i(X^i(h))\right\}\,d\mu=0,\qquad\forall h\in C^\infty_{comp}(M)
\end{equation}
and
$$
\operatorname{span}\{L_p:L\in\mathscr L(X^1,\dots,X^m,Y)\}=T_pM,\qquad\forall p\in M.
$$
Then $\mu$ has a $C^\infty$-smooth density with respect to the Riemannian measure on $M$.
\end{theorem}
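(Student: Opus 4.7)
The plan is to reinterpret the integral identity \eqref{hormmeasid} as saying that the distribution (Radon measure) $\mu$ annihilates the operator $P := Z + Y + \sum_{i=1}^m (X^i)^2$ acting on test functions. Equivalently, if one computes the formal adjoint $P^*$ of $P$ with respect to the Riemannian volume $\operatorname{vol}$, then $P^*\mu = 0$ in the sense of distributions on $M$. The conclusion will then follow from the classical H\"ormander hypoellipticity theorem for second order operators of sub-Laplacian type: if $P^*$ is hypoelliptic, then a distributional solution of $P^*\mu = 0$ is smooth, which is exactly the same as saying that $\mu$ is given by a $C^\infty$-density with respect to $\operatorname{vol}$.

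The first step is to compute $P^*$ explicitly. Using that $(M_Z)^*= M_Z$, $Y^*=-Y-\operatorname{div}Y$, and $((X^i)^2)^*=(X^i)^{*}\circ(X^i)^*$ with $(X^i)^*=-X^i-\operatorname{div}X^i$, one obtains after expanding
$$P^* = \sum_{i=1}^m(X^i)^2 + Y' + Z',$$
where $Y' = -Y + 2\sum_{i=1}^m (\operatorname{div}X^i)\, X^i$ is a smooth vector field on $M$ and $Z'\in C^\infty(M)$. The point is that $P^*$ has the same principal part $\sum_i (X^i)^2$ as $P$, while the first order drift changes from $Y$ to a vector field that differs from $-Y$ only by an element of the $C^\infty(M)$-module spanned by $X^1,\dots,X^m$.

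Second, I verify that H\"ormander's bracket-generating condition for $P^*$ holds. By hypothesis, $\mathscr L(X^1,\dots,X^m,Y)(p)=T_pM$ for every $p\in M$, and since the set on the left is a vector space we equivalently have $\mathscr L(X^1,\dots,X^m,-Y)(p)=T_pM$. Applying Proposition~\ref{dimeq} with $f_j=2\operatorname{div}X^j$ and the base vector field $-Y$ yields
$$\mathscr L(X^1,\dots,X^m,Y')(p)=\mathscr L(X^1,\dots,X^m,-Y)(p)=T_pM,\qquad p\in M,$$
so the Lie algebra generated by the coefficients of $P^*$ spans $T_pM$ at every point.

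Third, I invoke the classical theorem of H\"ormander (Acta Math.\ 1967) on hypoellipticity of operators of the form $\sum_{i=1}^m (X^i)^2 + Y' + Z'$ under the bracket-generating condition: such an operator is hypoelliptic on $M$, i.e.\ for every distribution $\mu$ on $M$ the inclusion $P^*\mu\in C^\infty(\mathrm{open}\ U)$ implies $\mu\in C^\infty(U)$. The integral identity \eqref{hormmeasid} is precisely the statement that $\langle \mu,Ph\rangle=0$ for every $h\in C^\infty_{\mathrm{comp}}(M)$, i.e.\ $P^*\mu=0$ in $\mathscr D'(M)$; hence $\mu\in C^\infty(M)$. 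A distribution that is smooth and coincides with a Radon measure is represented by a $C^\infty$-density against the fixed reference Riemannian volume, which is the desired conclusion. The main obstacle here is purely notational: one must make sure that the adjoint computation is carried out correctly and that the invariance of the H\"ormander condition under the modification $Y\leadsto Y'$ is justified --- both are handled respectively by a direct calculation and by the already-proved Proposition~\ref{dimeq}. The deep analytic input is entirely outsourced to the classical hypoellipticity theorem.
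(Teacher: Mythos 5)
Your proposal is correct and follows essentially the same route as the paper: compute the formal adjoint, observe that the drift changes from $Y$ to $-Y$ plus a $C^\infty(M)$-combination of the $X^i$, invoke Proposition~\ref{dimeq} to preserve the bracket-generating condition, and conclude by H\"ormander's hypoellipticity theorem. The only (inessential) difference is that the paper carries out the adjoint computation and the application of the classical theorem in local coordinates, with the explicit Jacobian factor $\sqrt{\operatorname{det}g_{ij}}$ relating the Lebesgue and Riemannian densities, whereas you phrase it intrinsically and leave the routine localization implicit.
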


\begin{proof}
Let $\varphi:O\to U$ be a diffeomorphism from an open set $O\subseteq\Bbb R^d$ onto an open set $U\subseteq M$, denote by $\phi$ the inverse of $\varphi$, define $\theta(A)=\nu\,[\varphi[A]]$ for $A\in\mathscr B(O)$, decompose $X^i_\varphi=\sum_{i=1}^dx_i\partial^i_\varphi$, $Y_\varphi=\sum_{i=1}^dy_i\partial^i_\varphi$ on $O$ and define $z=Z(\varphi)$ and
$$
Q=-y+2\sum_{i=1}^m(\operatorname{div}x^i)x^i,\qquad S=z-\operatorname{div}y+\sum_{i=1}^m\operatorname{div}[(\operatorname{div}x^i)x^i].
$$
Then \eqref{hormmeasid} implies that
$$
S\theta+Q(\theta)+\sum_{i=1}^mx^i(x^i(\theta))=0
$$
holds in the sense of distributions on $O$. According to Proposition \ref{dimeq}, 
$$
\operatorname{span}\{L_z:L\in\mathscr L(x^1,\dots,x^m,y)\}=\operatorname{span}\{L_z:L\in\mathscr L(x^1,\dots,x^m,Q)\}=\Bbb R^d,\qquad\forall z\in O.
$$
Hence, by the H\"ormander theorem \cite{hormanderl}, $\theta$ is absolutely continuous with respect to the Lebesgue measure and the density $\rho$ belongs to $C^\infty(O)$. If we define $L=\sqrt{\operatorname{det}g_{ij}}$ on $U$ then
$$
\nu(B)=\int_O\mathbf 1_B(\varphi)\rho\,dx=\int_B\frac{\rho(\phi)}L\,dx,\qquad B\in\mathscr B(U).
$$
By a localization argument, we obtain that $\nu$ has a density $R\in C^\infty(M)$ with respect to $dx$.
\end{proof}

\section{Density of product functions}

\begin{proposition}\label{density} Let $M$ be a compact submanifold in $\Bbb R^m$. Then 
$$
\mathcal P=\operatorname{span}\,\{h_1(t)h_2(x)h_3(z):h_1\in C^\infty_{comp}(0,\infty),\,h_2,h_3\in C^\infty(M)\}
$$
is dense in the space $C^\infty_{comp}((0,\infty)\times M\times M)$ in the following sense. Let $h\in C^\infty_{comp}((0,\infty)\times M\times M)$. Then there exist $\chi_n\in\mathcal P$ such that
$$
\chi_n\to h\quad\textrm{and}\quad X_m\dots X_1\chi_n\to X_m\dots X_1h
$$
uniformly on $(0,\infty)\times M\times M$ for every vector fields $X_1,\dots,X_m$ on $(0,\infty)\times M\times M$.
\end{proposition}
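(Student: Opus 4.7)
The plan is to approximate $h$ in ambient Euclidean coordinates by trigonometric polynomials in $(t,x,z)$ — which are automatically of tensor-product form — and then restrict the approximation to $(0,\infty)\times M\times M$. The argument has three ingredients: smooth extension to the ambient space, Fourier approximation on a torus, and transfer of uniform $C^\infty$-convergence from ambient space to the submanifold.

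First, using a tubular neighbourhood of the compact submanifold $M\subseteq\mathbb{R}^m$ together with a smooth cut-off, build a continuous linear extension operator $E\colon C^\infty(M\times M)\to C^\infty_{\mathrm{comp}}(\mathbb{R}^m\times\mathbb{R}^m)$. Fix $\psi\in C^\infty_{\mathrm{comp}}(0,\infty)$ equal to $1$ on the compact $t$-projection of $\operatorname{supp} h$, and applying $E$ in the $(x,z)$ variables pointwise in $t$, produce $\tilde h\in C^\infty_{\mathrm{comp}}((0,\infty)\times\mathbb{R}^m\times\mathbb{R}^m)$ such that $\tilde h\equiv h$ on $(0,\infty)\times M\times M$ and $\tilde h=\psi\tilde h$.

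Second, choose $L>0$ so large that $\operatorname{supp}\tilde h\subseteq(-L,L)^{1+2m}$; extending by zero gives a smooth $2L$-periodic function on the torus $\mathbb T_L^{1+2m}$. Classical Fourier theory ensures that the Fourier coefficients $c_{k,l,n}$ of $\tilde h$ decay faster than any polynomial in $(|k|+|l|+|n|)$, so the partial sums
\[
\chi_N(t,x,z)=\psi(t)\sum_{|k|,|l|,|n|\le N}c_{k,l,n}\,e^{i\pi kt/L}\,e^{i\pi l\cdot x/L}\,e^{i\pi n\cdot z/L}
\]
converge to $\psi\tilde h=\tilde h$ in $C^\infty(\mathbb{R}\times\mathbb{R}^m\times\mathbb{R}^m)$. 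Each summand is a product $h_1(t)h_2(x)h_3(z)$ with $h_1(t)=\psi(t)e^{i\pi kt/L}\in C^\infty_{\mathrm{comp}}(0,\infty)$ and with $h_2,h_3$ restrictions of smooth exponentials to $M$, hence in $C^\infty(M)$ (taking real and imaginary parts brings us into the real-valued space $\mathcal P$). Thus $\chi_N$ restricted to $(0,\infty)\times M\times M$ lies in $\mathcal P$.

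Finally, to upgrade ambient $C^\infty$-convergence to the statement of the proposition, cover the compact manifold $M$ by finitely many coordinate charts and use a partition of unity to extend each vector field $X_j$ on $N=(0,\infty)\times M\times M$ locally to a smooth vector field $\widetilde X_j$ on an open neighbourhood in $\mathbb{R}^{1+2m}$. For any smooth function $\Phi$ that has a smooth ambient extension $\widetilde\Phi$, the composition $X_r\cdots X_1\Phi$ equals the restriction of $\widetilde X_r\cdots\widetilde X_1\widetilde\Phi$ to $N$, so by compactness of $M\times M$ and compact $t$-support of $\psi$, the uniform ambient $C^{r}$-convergence $\chi_N\to\tilde h$ yields $X_r\cdots X_1\chi_N\to X_r\cdots X_1 h$ uniformly on $N$. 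The main obstacle is essentially bookkeeping in this last step — expressing iterated vector-field operators on the submanifold in terms of ambient partial derivatives via a partition-of-unity extension — but it is standard once the extension and Fourier-series steps have been carried out.
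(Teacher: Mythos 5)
Your proposal is correct and follows essentially the same route as the paper's proof: smooth compactly supported extension to the ambient Euclidean space, periodization and Fourier (tensor-product) approximation, a cut-off in the $t$ variable, and restriction back to $(0,\infty)\times M\times M$. The only cosmetic difference is that the paper invokes Fej\'er's theorem whereas you use rapid decay of Fourier coefficients of a smooth periodic function; both yield the required $C^\infty$-convergence, and your final step of transferring iterated vector-field derivatives to the submanifold is the same (implicit) bookkeeping the paper relies on.
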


\begin{proof}
Let $0<a<b$ be such that the support of $h$ is contained in $(a,b)\times M\times M$ and extend $h$ to a smooth compactly supported function in $\Bbb R\times\Bbb R^m\times\Bbb R^m$. This can be done by standard methods of local extensions and a partition of unity as $M$ is assumed to be compact. Denote by $h_1$ such an extension. The support of $h_1$ fits in a some large cube $Q=(-N,N)^{1+m+m}$ and we can replicate $h_1$ to each cube $2Nk+Q$ for $k\in\Bbb Z^{1+m+m}$ to obtain a smooth $2N$-periodic function $h_2$ such that $h_1=h_2$ in $Q$. Now we can apply the Fej\'er's theorem on Fourier series to find a sequence of functions 
$$
\xi_n\in\operatorname{span}\,\{v_1(t)v_2(x)v_3(z):v_1\in C^\infty_{2N\textrm{-per}}(0,\infty),\,h_2,h_3\in C^\infty_{2N\textrm{-per}}(\Bbb R^m)\}
$$
such that $\xi_n\to h_2$ in $C^\infty(\Bbb R^{1+m+m})$. If $\rho\in C^\infty(\Bbb R)$ has support in $(0,\infty)$ and $\rho=1$ on $[a,b]$ then we can define $\chi_n(t,x,z)=\rho(t)\xi_n(t,x,z)$. The restrictions of $\chi_n$ to $(0,\infty)\times M\times M$ belong to $\mathcal P$ and approximate $h$ in the asserted sense.
\end{proof}

\section{Continuous surjections between circles}

\begin{proposition}\label{surjection} Let $f:\Bbb S^1\to\Bbb S^1$ be continuous and locally injective. Then $f$ is a surjection.
\end{proposition}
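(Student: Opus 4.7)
The plan is to show that local injectivity forces $f$ to be an open map, and then use a connectedness argument since $f(\mathbb{S}^1)$ will be simultaneously open, closed, and nonempty.

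First I would make precise the local picture. Fix $p\in\mathbb{S}^1$ and pick an arc-neighbourhood $U_p$ of $p$ on which $f$ is injective, and so small that $f(U_p)$ is contained in a proper open arc of $\mathbb{S}^1$ (we can always shrink, since $f$ is continuous and $U_p$ can be taken of arbitrarily small diameter). Identifying $U_p$ with an open interval in $\mathbb{R}$ via a chart and identifying the target arc with an open interval in $\mathbb{R}$ via a branch of the lift $\mathbb{S}^1\setminus\{\ast\}\to\mathbb{R}$, the map $f|_{U_p}$ becomes a continuous injective map between open intervals. Such a map is automatically strictly monotonic, and therefore a homeomorphism onto an open subinterval. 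Going back through the charts, this shows that $f(U_p)$ is open in $\mathbb{S}^1$ and that $f|_{U_p}$ is a homeomorphism onto its image. In particular, $f$ is an open map.

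With openness in hand, the conclusion is immediate: the image $f(\mathbb{S}^1)$ is open in $\mathbb{S}^1$ because $f$ is open, and it is closed in $\mathbb{S}^1$ because $\mathbb{S}^1$ is compact and $f$ is continuous, so $f(\mathbb{S}^1)$ is a compact subset of the Hausdorff space $\mathbb{S}^1$. It is also non-empty. Since $\mathbb{S}^1$ is connected, $f(\mathbb{S}^1)=\mathbb{S}^1$, which is the desired surjectivity.

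The only real content is the monotonicity step at the beginning, and the only subtle point there is making sure that one can shrink $U_p$ so that $f(U_p)$ misses a point of $\mathbb{S}^1$ (so that a lift to $\mathbb{R}$ is available). This is the main obstacle, but it is easily handled: by continuity at $p$, we can choose $U_p$ so that $f(U_p)$ lies in any prescribed small arc around $f(p)$, and in particular in a proper open arc of $\mathbb{S}^1$. Everything else is a routine verification.
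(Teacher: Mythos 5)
Your proof is correct and follows essentially the same route as the paper: the paper also deduces surjectivity from $f(\Bbb S^1)$ being simultaneously compact (hence closed), open, and nonempty in the connected space $\Bbb S^1$. The only difference is that you spell out, via charts and monotonicity of continuous injections on intervals, why local injectivity makes $f$ an open map — a step the paper merely asserts.
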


\begin{proof}
Since $\Bbb S^1$ is compact and $f$ is continuous, $f[\Bbb S^1]$ is also a compact. But local injectivity of $f$ implies that $f[\Bbb S^1]$ is open. Hence $f$ is a surjection as $\Bbb S^1$ is connected.
\end{proof}

\bibliographystyle{ams-pln}

\bibliography{literature}

\end{document}